\newtheorem{theorem}{Theorem}[section]
\newtheorem{proposition}[theorem]{Proposition}
\newtheorem{lemma}[theorem]{Lemma}
\newtheorem{corollary}[theorem]{Corollary}
\theoremstyle{definition}
\newtheorem{definition}[theorem]{Definition}
\newtheorem{example}[theorem]{Example}
\theoremstyle{remark}
\numberwithin{equation}{section}
\DeclareMathOperator\rank{rank}
\newcommand{\sym}{{\sf Sym}}
\newcommand{\C}{\mathbb{C}}
\newcommand{\Q}{\mathbb{Q}}
\newcommand{\cA}{\mathcal{A}}
\newcommand{\cB}{\mathcal{B}}
\newcommand{\cC}{\mathcal{C}}
\newcommand{\cD}{\mathcal{D}}
\newcommand{\cF}{\mathcal{F}}
\newcommand{\cH}{\mathcal{H}}
\newcommand{\cI}{\mathcal{I}}
\newcommand{\cJ}{\mathcal{J}}
\newcommand{\cL}{\mathcal{L}}
\newcommand{\cM}{\mathcal{M}}
\newcommand{\cP}{\mathcal{P}}
\newcommand{\cR}{\mathcal{R}}
\newcommand{\cS}{\mathcal{S}}
\newcommand{\mtrx}[4]{\begin{bmatrix} #1 & #2\\ #3 & #4 \end{bmatrix}}
\newcommand{\pr}{{\sf pr}}
\newcommand{\ovr}[1]{\overline{#1}}
\newcommand{\lm}{\lambda}
\newcommand{\veps}{\varepsilon}
\newcommand{\fuse}[2]{#1/#2}
\newcommand{\aaut}{{\sf AAut}}
\begin{document}

\title[Uniformity in association schemes]{Uniformity in association
schemes and coherent configurations: cometric Q-antipodal schemes and linked systems}

\author[Edwin R. Van Dam]{Edwin R. van Dam}
\address{Department of  Econometrics and Operations Research,
Tilburg University, PO Box 90153, 5000 LE Tilburg, The Netherlands}
\email{Edwin.vanDam@uvt.nl}

\author{William J. Martin}
\address{Department of Mathematical Sciences and Department of Computer Science,
Wor\-ces\-ter Polytechnic Institute, 100 Institute Rd, Worcester, MA 01609,
USA} \email{martin@wpi.edu}
\thanks{The second author was supported in part by NSA grant number H98230-07-1-0025.\\ \indent This version is published in
Journal of Combinatorial Theory, Series A 120 (2013), 1401--1439.}

\author{Mikhail Muzychuk}
\address{Department of Mathematics, Netanya Academic College,
University St. 1, Netanya 42365, Israel} \email{muzy@netanya.ac.il}

\subjclass[2010]{Primary 05E30, Secondary 05B25, 05C50, 51E12}


\dedicatory{Dedicated to the memory of Donald G. Higman}

\keywords{cometric association scheme, imprimitivity,
Q-antipodal association scheme, uniform association scheme,
linked system, coherent configuration, strongly regular graph decomposition.}

\maketitle

\begin{abstract}
 Inspired by some intriguing examples, we study
 uniform association schemes and uniform coherent configurations, including cometric Q-antipodal association
 schemes. After a review of imprimitivity, we show that an imprimitive
 association scheme is uniform if
 and only if it is dismantlable, and we cast these schemes in
 the broader context of certain --- uniform --- coherent
 configurations. We also give a third characterization of uniform
 schemes in terms of the Krein parameters, and derive
 information on the primitive idempotents of such a scheme.

 In the second half of the paper, we apply these results to
 cometric association schemes.
 We show that each such scheme is uniform if and only if it
 is Q-antipodal, and derive results on the parameters of the
 subschemes and dismantled schemes of cometric Q-antipodal schemes. We revisit
 the correspondence between uniform indecomposable three-class
 schemes and linked systems of symmetric designs, and show that
 these are cometric Q-antipodal. We obtain a characterization of
 cometric Q-antipodal four-class schemes in terms of only a few
 parameters, and show that any strongly
 regular graph with a (``non-exceptional") strongly regular decomposition gives rise
 to such a scheme. Hemisystems in generalized quadrangles provide
 interesting examples of such decompositions. We finish with a
 short discussion of five-class schemes as well as a
 list of all feasible parameter sets for cometric Q-antipodal
 four-class schemes with at most six fibres and fibre size at
 most 2000, and describe the known examples.
 Most of these examples are related to groups, codes, and
 geometries.
 \end{abstract}

\section{Introduction}

Motivated by the search for cometric (Q-polynomial) association
schemes, we study uniform association schemes. Cometric
association schemes are the ``dual version'' of
distance-regular graphs (metric schemes), and the latter are
well-studied objects, cf. \cite{bcn, dkt12}. Classical metric schemes
such as Hamming schemes and Johnson schemes are in fact also
cometric. Bannai and Ito \cite[p.\ 312]{banito} conjectured that
for large enough $d$, a primitive $d$-class scheme is metric if
and only if it is cometric. Partly because of this conjecture,
the topic of cometric association schemes was studied mainly in
connection to distance-regular graphs, at least until the end
of last century. An exception to this is the work of Delsarte
\cite{del} (and others building on this) who showed the
importance of cometric schemes in design theory.

This slowly changed when De Caen and Godsil raised the challenging problem of
constructing cometric schemes that are not metric or duals of metric schemes
(cf. \cite[p.\ 234]{godsil}, \cite[Acknowledgments]{mmw}). Around the same time,
Suzuki derived fundamental results on imprimitive cometric schemes
\cite{suzimprim} and on cometric schemes with multiple Q-polynomial orderings
\cite{suztwoq}, but examples of the above type were still missing. In the last
few years, however, there has been considerable activity in the area, with the
first new constructions of cometric (but not metric) schemes given by Martin,
Muzychuk, and Williford \cite{mmw}. For a recent overview of results on
cometric schemes we refer to the survey on association schemes by Martin and
Tanaka \cite{mtanaka}. Very recent is the work of Kurihara and Nozaki
\cite{Kur2011T, KN2012JCTA}, Penttila and Williford \cite{penwil}, and
Suda \cite{suda3, suda1, suda2, Suda2012JCTA, Suda2012pre}.

Meanwhile, in \cite{HigmanCA}--\cite{Huninform}, Higman obtained numerous
results on imprimitive association schemes and coherent configurations.
In his paper on four-class schemes and
triality \cite{Htriality} and also in an unpublished manuscript
\cite{Huninform}, he introduced the concept of uniformity of an
imprimitive scheme, and he mentioned several examples of such
uniform schemes. It turns out that many of these examples are
cometric Q-antipodal. Inspired by this, we work out the concept
of uniformity, and apply it to cometric Q-antipodal schemes.

This paper is organized as follows. We finish this introduction with an intriguing introductory example: the linked
system of partial $\lambda$-geometries that is related to the Hoffman-Singleton graph. This example gives rise to a
cometric Q-antipodal association scheme, and illustrates many of the interesting features we will consider in the
paper. In Section \ref{Sec:background}, we remind the reader of basic background material on association schemes,
focusing in particular on the natural subschemes and quotient schemes of an imprimitive association scheme. The main
results for the first half of the paper are to be found in Sections \ref{sec:uniform} and \ref{Sec:coco}. We first show
in Section \ref{Subsec:dismunif} that the dismantlability property introduced in \cite{mmw} is implied by Higman's
uniformity property \cite{Huninform}. In order to establish the reverse implication, we need to consider a fission of
our uniform association scheme whose adjacency algebra is necessarily non-commutative. So we introduce coherent
configurations at this point to draw out the deeper structure that occurs here. Only at the level of this more detailed
structure do we see the full equivalence of the dismantlable and uniform properties in Theorem \ref{dismunif}. We
finish the first half of the paper with another characterization of the same phenomenon in Section
\ref{Subsec:QHigman}, this time cast in terms of Krein parameters only. We introduce Q-Higman schemes and show that
these, too, are equivalent to uniform schemes. To place the main concepts discussed here in perspective, we summarize
them in the Venn diagram of Figure \ref{venn}.

The second half of the paper returns to the cometric case and explores the
implications of the results discussed above for cometric Q-antipodal schemes.
In Section \ref{sec:cometricQ}, as in Sections \ref{Sec:background} and
\ref{sec:uniform}, we strive to make the paper fairly self-contained; we
include all definitions that are not available in the standard literature. We
show that each cometric scheme is uniform if and only if it is Q-antipodal, and
derive results on the parameters of the subschemes and dismantled schemes of
cometric Q-antipodal schemes. This general discussion of cometric Q-antipodal
schemes is followed by three more detailed sections focusing on such
association schemes with a small number of classes. In Section
\ref{sec:threeclass}, we show that uniform indecomposable three-class schemes
are always cometric Q-antipodal, and that these correspond naturally to linked
systems of symmetric designs. In Section \ref{sec:four}, we study the more
complicated case of four-class schemes. We obtain a characterization of
cometric Q-antipodal four-class schemes in terms of just a few of their
parameters, and show that any strongly regular graph with a
(``non-exceptional") strongly regular decomposition gives rise to such a
scheme. An exciting special case of recent interest is that of hemisystems in
generalized quadrangles. To facilitate future work on such problems, we
generate a list of all feasible parameter sets for cometric Q-antipodal
four-class schemes with at most six fibres and fibre size at most 2000, and
describe the known examples from this table. In the short Section
\ref{sec:five}, we mention some examples of five-class schemes that are
cometric Q-antipodal. The final section, Section \ref{Sec:misc}, collects some
miscellaneous remarks.

As background we refer to Cameron \cite{cameronbook} and Higman
\cite{HigmanCA} for coherent configurations, and to Bannai and
Ito \cite{banito}, Brouwer, Cohen, and Neumaier \cite{bcn},
Godsil \cite{godsil}, and Martin and Tanaka \cite{mtanaka} for
association schemes.

\begin{figure}[h!]
\begin{center}
\includegraphics[width=120mm]{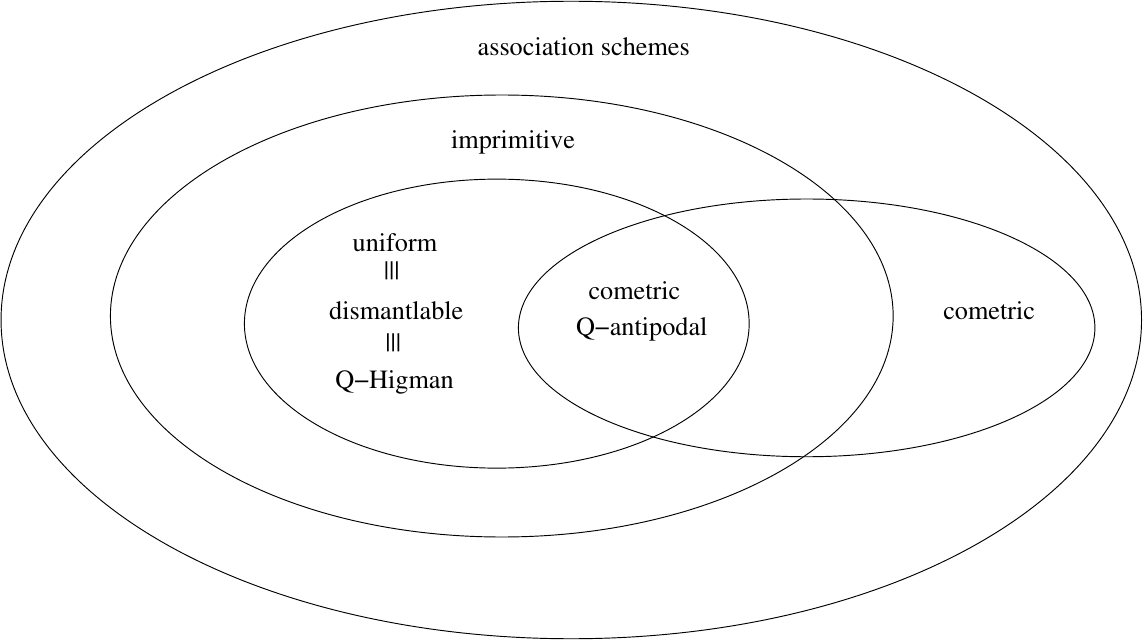}
\caption{Venn diagram of relevant types of association schemes} \label{venn}
\end{center}
\end{figure}

\subsection{A linked system of partial
$\lambda$-geometries related to the Hoffman-Singleton graph: an instructive
example}\label{HOSI}

The maximum size of a coclique in the Hoffman-Singleton graph
is 15. There are 100 cocliques of this size, and it is known
that one can define a bipartite cometric distance-regular graph
$\Gamma$ with diameter four and valency 15 on these 100
cocliques by calling two cocliques adjacent whenever they
intersect in eight vertices, cf. \cite[p.\ 393]{bcn}.
Miraculously, the distance-four graph $\Gamma_4$ of this graph
forms a Hoffman-Singleton graph on each part of the
bipartition. Moreover, the union of $\Gamma$ and $\Gamma_4$ is
the so-called Higman-Sims graph. In fact, in this way it is
clear that the Higman-Sims graph can be decomposed into two
Hoffman-Singleton graphs; here we have a strongly regular
decomposition of a strongly regular graph, in the sense of
Haemers and Higman \cite{HH}. The incidence structure that
$\Gamma$ induces between the two parts of the bipartition is a
so-called strongly regular design as defined by Higman
\cite{Hsrd}, and more specifically a partial $\lambda$-geometry
as defined by Cameron and Drake \cite{camdrake}. Building on a
description of the Hoffman-Singleton graph by Haemers
\cite{Hthesis}, Neumaier \cite{neumaier} describes this partial
$\lambda$-geometry --- and hence the graph $\Gamma$ ---
using the points, lines, and planes of $PG(3,2)$. So far, so good.

Neumaier goes on to describe how $\Gamma$ can be constructed
in the Leech lattice. Using the group $2\cdot U_3(5) \cdot
S_3$, he finds three types of 50 vectors each, and between each
two types of 50 vectors the above partial $\lambda$-geometry.
Moreover, these geometries are linked: we have a linked system
of partial $\lambda$-geometries.

What is going on combinatorially is that one can extend the
distance-regular graph $\Gamma$ by the 50 vertices of the
Hoffman-Singleton graph, by calling a coclique adjacent to a
vertex whenever the coclique contains the vertex. This gives a
30-regular graph on 150 vertices, and it generates a uniform
imprimitive four-class association scheme.  This association
scheme turns out to be cometric too (but it is not metric); in
fact it is Q-antipodal with three fibres of size 50. Here
(again) one of the relations forms a Hoffman-Singleton graph on
each fibre, and between each pair of fibres is the incidence
structure of a partial $\lambda$-geometry (strongly regular
design).

One natural question is whether you can throw in another 50
vertices, and get yet another cometric association scheme.
We address this specific case in Section \ref{smallcases},
and give a general bound on the number of fibres in
Section \ref{Subsec:absbound}.

Higman also gives the above example in his paper on four-class
imprimitive schemes \cite{Htriality}, and in his unpublished
manuscript on uniform schemes \cite{Huninform}. This fairly small
example illustrates most of the central features considered in this
paper and, in our view, the attractive interplay of
combinatorial subjects that one sees in the study of
cometric Q-antipodal association schemes.

\section{Association schemes}
\label{Sec:background}

Our goal in this section is to review briefly the basic definitions
from the theory of association schemes that we will need and to
summarize some necessary material from the theory of imprimitive schemes.
We defer our review of coherent configurations to Section \ref{Sec:coco}
since their role will become clear at that point in the narrative.

\subsection{Definitions}

A (symmetric) $d$-class association scheme $(X,\cR)$
consists of a finite set $X$ of size $v$ and a set $\cR$ of
relations on $X$ satisfying
\begin{itemize}
\item $\cR=\{R_0,\ldots, R_d\}$ is a partition of $X \times X$;
\item $R_0=\Delta_X:=\{(x,x)| x \in X\}$ is the identity relation;
\item $R_i^\top = R_i$ for each $i$,
where $R_i^\top:=\{(x,y) | (y,x) \in R_i\}$;
\item there exist integers $p_{ij}^h$ such that
$$ \left| \left\{ z \in X \, | \,  (x,z) \in R_i \ {\mbox {\rm and}} \ (z,y) \in R_j\right\}
\right| = p_{ij}^h$$ whenever $(x,y) \in R_h$, for each $i,j,h \in
\{0,\ldots , d\}$.
\end{itemize}
The integers $p_{ij}^h$ are called the intersection numbers of the scheme.

The adjacency matrix $A_R$ of a relation $R$ on $X$ is a $v\times v$
$(0,1)$-matrix defined by $(A_R)_{xy}=1$ if $(x,y)\in R$, and
zero otherwise. In this case, we abbreviate by $A_i:=A_{R_i}$
the adjacency matrix of relation $R_i$
and consider $\cA:=\langle A_i|i=0,\dots,d \rangle$. Then this
vector space is a $(d+1)$-dimensional commutative algebra of
symmetric matrices; this is called the Bose-Mesner algebra of
the association scheme.  Such an algebra admits a basis of
pairwise orthogonal primitive idempotents (a nonzero idempotent $E$ of
$\cA$ is called primitive if $AE$ is proportional to $E$ for
each $A\in\cA$). We denote these by $E_0, E_1, \ldots, E_d$
with the convention that $E_0 = \frac{1}{v}J$ where $J= \sum_i
A_i$ is the all-ones matrix. The first and second eigenmatrices
of the scheme are denoted by $P$ and $Q$, respectively, and are
defined by the change-of-basis equations
$$ A_i = \sum_j P_{ji} E_j \qquad \text{and} \qquad E_j = \frac{1}{v} \sum_i Q_{ij} A_i.$$

The algebra $\cA$ is also closed under entrywise (Schur-Hadamard)
multiplication $\circ$ of matrices because $A_i \circ A_j  =
\delta_{ij} A_i$. (We call the $(0,1)$-matrices $A_i$ the primitive Schur
idempotents of $\cA$.) The (nonnegative) Krein parameters (or dual
intersection numbers) $q_{ij}^h$ are the structure constants for
this multiplication with respect to the basis of primitive
idempotents:
$$ E_i \circ E_j = \frac{1}{v} \sum_h q_{ij}^h E_h . $$
We abbreviate $v_i := P_{0i}=p^0_{ii}$ and call this the  $i^{\rm th}$ valency; likewise, $m_j := Q_{0j}=q^0_{jj}$ is
called the $j^{\rm th}$ multiplicity of the scheme.

\subsection{Metric schemes and cometric schemes}

The association scheme $(X,\cR)$ is called metric (or
``P-polynomial'') if there exists an ordering
$R_0,R_1,\ldots,R_d$ of the relations for which
\begin{itemize}
\item $p_{ij}^h = 0$ whenever $0 \le h < |i-j|$ or $i + j < h $, and
\item $p_{ij}^{i+j} > 0$ whenever $p_{ij}^{i+j}$ is defined.
\end{itemize}
An ordering with respect to which these properties hold is
called a P-polynomial ordering. In this case, $R_i$ can be
interpreted as the distance-$i$ relation in the simple graph
$(X,R_1)$ which is necessarily distance-regular. Metric schemes
with given P-polynomial orderings are in one-to-one
correspondence with distance-regular graphs.

The association scheme $(X,\cR)$ is called cometric (or
``Q-polynomial'') if there exists an ordering of the primitive
idempotents $E_0,E_1,\ldots,E_d$ for which
\begin{itemize}
\item $q_{ij}^h = 0$ whenever $0 \le h < |i-j|$ or $i + j < h$, and
\item $q_{ij}^{i+j} > 0$ whenever $q_{ij}^{i+j}$ is defined.
\end{itemize}
It is well known (cf. \cite[Prop. 2.7.1]{bcn}) that to check
that a scheme is cometric it suffices to check these properties
for $i=1$. An ordering with respect to which these hold is
called a Q-polynomial ordering, and $E_1$ is called a
Q-polynomial generator. There is no known simple combinatorial
or geometric interpretation of the cometric property. Suzuki
\cite{suztwoq} showed that, while it is possible to have two
distinct Q-polynomial orderings, there can be no more than two
such orderings for a given association scheme, with the
exception of the cycles.  Several important families of
association schemes, such as the Hamming schemes and Johnson
schemes, are both cometric and metric. But our study here does
not assume the metric property at all.

Let $c_i^*:=q^i_{1,i-1}, a_i^*:=q^i_{1i}$, and
$b_i^*:=q^i_{1,i+1}$. Then  $c_i^* +  a_i^* + b_i^* = q^0_{11}$ and
the Krein array of the cometric
association scheme is defined as
$$\{b_0^*,b_1^*,\dots,b_{d-1}^*;c_1^*,c_2^*,\dots,c_d^*\}.$$
Using the Krein array, we define a sequence of orthogonal polynomials $q_j$,
$j=0,1,\dots,d+1$ by $q_0(x)=1$, $q_1(x)=x$, and the three-term recurrence
$xq_j(x)=c_{j+1}^*q_{j+1}(x)+a_j^*q_j(x)+b_{j-1}^*q_{j-1}(x)$, where we let
$c_{d+1}^*:=1$. It follows that $vE_j=q_j(vE_1)$, $j=0,1,\dots,d$ where matrix
multiplication is entrywise (and hence the empty product is $J$). Moreover,
because $v E_1 \circ E_d = b_{d-1}^* E_{d-1} + a_d^* E_d$, we have that the
roots of $q_{d+1}(x)$ are precisely $Q_{i1}$ for $i=0,\ldots,d$. It is now easy
to see that, whenever $E_1$ is a Q-polynomial generator for the Bose-Mesner
algebra, column one of the matrix $Q$ has $d+1$ distinct entries.

\subsection{Imprimitive schemes}
\label{Subsec:imprim}

The association scheme $(X,\cR)$ with Bose-Mesner algebra $\cA$, adjacency matrices $A_i, i=0,1,\dots,d$, and primitive
idempotents $E_j, j=0,1,\dots,d$ is called imprimitive if at least one of its nontrivial relations is disconnected (as
a graph). It was first shown by Cameron, Goethals, and Seidel \cite{cgs} (and not hard to verify, cf. \cite[Thm.~9.3,
Thm.~4.6]{banito}) that imprimitivity is equivalent to each of the following properties:
\begin{itemize}
\item there is a set $\cI$ with $\{0\} \subsetneq \cI
    \subsetneq \{0,1,\dots,d\}$ such that $\langle A_i | i
    \in \cI \rangle$ is a matrix subalgebra of $\cA$;
\item there is a set $\cJ$ with $\{0\} \subsetneq \cJ
    \subsetneq \{0,1,\dots,d\}$ such that $\langle E_j | j\in
    \cJ \rangle$ is a $\circ$-subalgebra of $\cA$;
\item there is a matrix\footnote{This matrix is given
by Equation \eqref{Eimprim}.} $E \in \cA$, not $0$, $I$, or $J$,
    such that $E^2=n E$ and $E \circ E = E$ for some $n$;
\item the matrix $E_j$ has repeated columns for some $j>0$.
\end{itemize}

\noindent For an imprimitive scheme, the sets $\cI$ and $\cJ$
may not be unique, however the various index sets $\cI$ and $\cJ$ are
paired by the following equation:
\begin{equation}
\label{Eimprim}
\sum_{i \in \cI} A_i=n\sum_{j \in \cJ} E_j=I_{w}\otimes J_n
\end{equation}
for some choice of ordering of the vertices.
Thus the $v$ vertices are partitioned into $w$
fibres of size $n$. Like $\cI$ and $\cJ$, this partitioning $\cF$
into fibres --- the so-called imprimitivity system ---  may not
be unique, but each of $\cI$, $\cJ$, $\cF$ is well-defined given
any other one of the three. In the remainder of the paper we will
always assume however that $\cI$, $\cJ$, and the imprimitivity system
are fixed and given, unless mentioned otherwise. Of the equivalent
statements of imprimitivity, the last one could be explained as
``dual imprimitivity". In fact, in this case each of the
matrices $E_j$, $j \in \cJ$ is constant on each fibre $U \in \cF$
(i.e., columns $x$ and $y$ of $E_j$ are identical when $x,y\in U$).
This is analogous to the fact that
each relation $R_i, i \in \cI$ is disconnected.

It easily follows that on each fibre $U\in \cF$, there is an association
scheme --- a so-called subscheme --- induced by the relations
indexed by $\cI$. In fact, the intersection numbers
$\tilde{p}^h_{ij}$ of the subscheme are the same as the
corresponding ones in the original scheme, i.e.,
$$\tilde{p}^h_{ij}=p^h_{ij}, i,j,h \in \cI.$$
To put things differently,
$\cB:=\langle A_i | i \in \cI \rangle$
is a Bose-Mesner subalgebra of the
Bose-Mesner algebra $\cA$ (i.e., $\cB$ is a subalgebra under both
ordinary and entrywise multiplication). For later purpose, we
define a linear (projection) operator $\pi:\cA \rightarrow \cA$
by
\begin{equation}
\label{Epi}
\pi(A)=A \circ (I_w \otimes J_n)
\end{equation}
for $A \in \cA$.
It is clear that $\pi(A \circ A')=\pi(A) \circ \pi(A')$ for all
$A,A' \in \cA$. Because the map $\pi$ sends $A = \sum_{i=0}^d
c_i A_i$ to $\sum_{i\in \cI} c_i A_i$, it is also clear that
$\pi(\cA)=\cB$.
Note also that $\cB$ is
a $\circ$-ideal in $\cA$, because if $A \in \cA$ and $B\in
\cB$, then $A \circ B =A \circ \pi(B)=\pi(A) \circ B \in \cB$.

Each imprimitivity system also gives us a quotient association scheme.
Dual to $\cB$, consider
$$\cC:=\langle E_j | j \in \cJ \rangle =\{A(I_w \otimes J_n) | A \in \cA\};$$
this is also a Bose-Mesner subalgebra of $\cA$.
It is the image of $\cA$ under the projection $\pi^\ast$
which sends $A = \sum_{j=0}^d c_j E_j$ to
$\pi^\ast(A):=\frac{1}{n}A(I_w \otimes J_n) = \sum_{j\in
\cJ} c_j E_j$.  Each Schur idempotent of $\cC$ must be a sum of
certain $A_i$ and if $A=\sum_{i\in \cH} A_i$ satisfies $A =
\frac{1}{n}A(I_w \otimes J_n)$, then $A_{xy} = A_{x'y'}$
whenever $x$ is in the same fibre as $x'$, and $y$ is in the
same fibre as $y'$.
So, for each $C \in \cC$,
there exists a well-defined $w\times w$ matrix $\iota(C)$ satisfying
$$C = \iota(C) \otimes J_n . $$
It is not hard to verify that the set $\left\{ \iota(C)|C \in \cC
\right\}$ is a Bose-Mesner algebra also; this gives an association
scheme --- the so-called quotient scheme --- on the set of fibres.
In this case, the Krein parameters of this quotient scheme are the
same as the corresponding ones in the original scheme (cf.
\cite[Sec.~2.4]{bcn}). For completeness we mention that Rao,
Ray-Chaudhuri, and Singhi \cite{rao} obtained results on the
composition factors of imprimitive schemes.

For the topic of this paper --- uniform schemes and, later, cometric Q-antipodal schemes --- our main interest is in
the relation between the scheme and its subschemes. The corresponding quotient scheme is in this case trivial, that is,
a one-class scheme corresponding to a complete graph. The relationship between the scheme and its subschemes and
quotient schemes is essentially worked out by Bannai and Ito \cite[Thm.~II.9.9]{banito} (see also \cite[Sec.~2.4]{bcn}
for some information on the relation between the parameters). However, to get a better understanding of what is going
on, we include some of their arguments and results (and those of others) applied to subschemes here. (Moreover, Bannai
and Ito treated the dual case, which, even though it is analogous, may sometimes be confusing.) By doing this, we
derive in Lemma~\ref{subkrein} another (and new, as far as we know) relation between the parameters.

Following Bannai and Ito, we define the relation $\sim^*$ on the
index set $\{0,1,\dots,d\}$ (indexing the primitive idempotents) by
$$i \sim^* j :\Leftrightarrow q^h_{ij} \neq 0 \text{~for some~}h \in
\cJ.$$

\begin{lemma} The relation $\sim^*$ is an equivalence relation.
\end{lemma}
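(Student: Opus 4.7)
The plan is to verify the three axioms in turn. Reflexivity follows from $0 \in \cJ$ together with $q^0_{ii} = m_i > 0$; this last equality is standard, obtained by equating sums of entries on both sides of $E_i \circ E_i = \frac{1}{v}\sum_h q^h_{ii}E_h$ and noting that $\mathbf{1}^\top E_h \mathbf{1} = v\delta_{h,0}$. Symmetry is immediate from $E_i \circ E_j = E_j \circ E_i$, which gives $q^h_{ij}=q^h_{ji}$. The substantive content is transitivity.

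Suppose $q^{h_1}_{ij}>0$ and $q^{h_2}_{jk}>0$ for some $h_1,h_2\in\cJ$. A first step is to recall the symmetry identity $m_h q^h_{ab} = m_a q^a_{bh} = m_b q^b_{ah}$ for Krein parameters in a symmetric scheme; this follows because $\mathrm{tr}((E_a\circ E_b)E_h) = \sum_{x,y}(E_a)_{xy}(E_b)_{xy}(E_h)_{xy}$ is symmetric in $a,b,h$ and equals $\frac{m_h}{v}q^h_{ab}$. Using positivity of the multiplicities, the hypotheses can be rewritten as $q^j_{ih_1}>0$ and $q^k_{jh_2}>0$.

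The key calculation is to compute $(E_i\circ E_{h_1})\circ E_{h_2}$ in two ways using associativity of $\circ$. Expanding directly, the coefficient of $E_k$ equals $\frac{1}{v^2}\sum_a q^a_{ih_1}q^k_{ah_2}$; the $a=j$ term contributes $q^j_{ih_1}q^k_{jh_2}>0$, while every other summand is nonnegative by Krein nonnegativity, so this coefficient is strictly positive. Re-associating as $E_i\circ(E_{h_1}\circ E_{h_2})$ and using that $\cC$ is a $\circ$-subalgebra --- so $q^{h_3}_{h_1h_2}=0$ for $h_3\notin\cJ$ --- the same coefficient also equals $\frac{1}{v^2}\sum_{h_3\in\cJ}q^{h_3}_{h_1h_2}\,q^k_{ih_3}$. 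Since this latter sum is positive and each summand is nonnegative, some $h_3\in\cJ$ must satisfy $q^k_{ih_3}>0$; by the symmetry identity this is equivalent to $q^{h_3}_{ik}>0$, establishing $i\sim^* k$.

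The only delicate aspect I expect is careful bookkeeping of the Krein-parameter indices under the three-way symmetry; beyond that, the argument is essentially a packaging of the combinatorial closure condition ``$\cC$ is a $\circ$-subalgebra'' into the transitivity step, with Krein nonnegativity converting the identity of coefficients into an existence statement.
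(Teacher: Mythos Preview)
Your proof is correct and follows essentially the same route as the paper: transitivity is obtained by equating the coefficient of $E_k$ on the two sides of $(E_i\circ E_{h_1})\circ E_{h_2}=E_i\circ(E_{h_1}\circ E_{h_2})$, using that $\langle E_j\mid j\in\cJ\rangle$ is a $\circ$-subalgebra to restrict the inner sum to $\cJ$, and invoking Krein nonnegativity plus the three-way symmetry $m_h q^h_{ab}=m_a q^a_{bh}$ to convert positivity into the existence of a suitable $h_3\in\cJ$. Your write-up is more explicit about reflexivity, symmetry, and the provenance of the identities, but the key step matches the paper's proof exactly.
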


\begin{proof}
If $i \sim^* j \sim^* l$, say $q^h_{ij} \neq 0$ and $q^{h'}_{jl} \neq 0$ with $h,h' \in
\cJ$, then by using a standard identity
(cf. \cite[Prop. II.3.7(vii)]{banito}, \cite[Lem.~2.3.1(vi)]{bcn}) and the fact that
$q^{h''}_{hh'}=0$ if $h'' \notin \cJ$, we obtain that
$$\sum_{h'' \in \cJ}q^l_{ih''}q^{h''}_{hh'}=\sum_{h''=0}^d q^l_{ih''}q^{h''}_{hh'}=\sum_{j'=0}^d
q^{j'}_{ih}q^{l}_{j'h'}\geq q^{j}_{ih}q^{l}_{jh'}>0,$$ and it follows that for some $h''
\in \cJ$ we have $q^{h''}_{il} \neq 0$, i.e., $i \sim^* l$.
\end{proof}

\noindent One of the equivalence classes of this relation must
be $\cJ=:\cJ_0$, and we label the others by $\cJ_1,\dots,\cJ_e$.

\begin{example}
In the linked system of geometries described in the introduction, we
obtained a four-class imprimitive association scheme on $150$
vertices. In that example, if we use the $Q$-polynomial ordering of
the eigenspaces, the relation $\sim^*$ has equivalence classes
$\cJ_0 = \{0,4\}$, $\cJ_1 = \{1,3\}$ and $\cJ_2 = \{2\}$, which, as
we shall later see, is indicative of the Q-antipodal case.
\end{example}

\noindent Now we claim that the idempotents $$F_j:=\sum_{j' \in
\cJ_j}E_{j'},\qquad j=0,1,\dots,e$$ are the primitive idempotents of $\cB$,
and hence by restricting these to a fibre we obtain the primitive
idempotents of the subscheme on that fibre. To prove this claim, and
to obtain a useful relation between the Krein parameters of $\cA$
and $\cB$, we define the nonnegative parameter
\begin{equation}
\label{Erhodef}
\rho^i_j:=\sum_{h\in \cJ}q^i_{jh},
\end{equation}
and note that $\rho^i_j
\neq 0$ if and only if $i \sim^* j$. We abbreviate
$\rho^j_j=:\rho_j$.

\begin{lemma}
\label{idempotents}
The primitive idempotents of $\cB$ are $F_j$,
$j=0,1,\dots,e$, so $\cB$ has dimension $|\cI|=e+1$.
 Moreover, if $j' \in \cJ_{j}$, then
$\pi(E_{j'})=\frac{\rho_{j'}}{w}F_{j}$.
\end{lemma}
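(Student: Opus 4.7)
My plan is to first derive an explicit formula for $\pi(E_{j'})$, then use it to characterize which idempotents of $\cA$ lie in $\cB$, and finally read both claims of the lemma off this characterization. Starting from $I_w\otimes J_n = n\sum_{h\in\cJ}E_h$ and the Krein expansion of $E_{j'}\circ E_h$, a routine calculation yields
\[
\pi(E_{j'}) \;=\; \frac{1}{w}\sum_k \rho^k_{j'}\, E_k,
\]
which is automatically supported on the class $\cJ_j$ containing $j'$. Any idempotent of $\cA$ is a sum $G = \sum_{k\in P}E_k$, and the fixed-point condition $\pi(G)=G$ reduces, after matching coefficients of each $E_l$, to $\sum_{k\in P}\rho^l_k = w\cdot[l\in P]$. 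Nonnegativity of $\rho^l_k$ immediately forces $P$ to be a union of $\sim^*$-classes, reducing the content of the equation to the single identity $\sum_{k\in\cJ_l}\rho^l_k = w$ for each $l$.

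This identity is the technical heart of the argument, and I would prove it by comparing two expressions for $\mathrm{tr}(\pi(E_{j'}))$: directly from the definition of $\pi$ it equals $m_{j'}$, while from the displayed formula it equals $\frac{1}{w}\sum_{k\in\cJ_j}\rho^k_{j'}m_k$. Applying the standard Krein symmetry $m_k q^k_{j'h} = m_{j'} q^{j'}_{kh}$ (equivalently $m_k\rho^k_{j'}=m_{j'}\rho^{j'}_k$) converts the resulting identity into $\sum_{k\in\cJ_j}\rho^{j'}_k = w$, as required.

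The lemma now follows quickly. Each $F_j$ corresponds to $P=\cJ_j$, a minimal admissible set, so $F_j\in\cB$; conversely, every primitive idempotent of $\cB$ arises from a minimal union of $\sim^*$-classes. Thus $F_0,\ldots,F_e$ exhaust the primitive idempotents of $\cB$, giving $\dim \cB = e+1=|\cI|$. For the second claim, $\pi(E_{j'})\in\cB$ must expand as $\sum_l c_l F_l$; matching against the opening formula forces $\rho^k_{j'}/w$ to be constant across $k\in\cJ_j$, and specializing $k=j'$ pins that constant down to $\rho_{j'}/w$. The main obstacle is the trace-plus-Krein-symmetry step establishing $\sum_{k\in\cJ_l}\rho^l_k = w$; once this is in hand, the rest is bookkeeping between the $\{E_k\}$ and $\{F_j\}$ bases.
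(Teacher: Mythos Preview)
Your proof is correct, and it diverges from the paper only in how you establish the converse direction (that each $F_j$ actually lies in $\cB$). Both arguments begin with the formula $\pi(E_{j'}) = \frac{1}{w}\sum_k \rho^k_{j'}E_k$ and use nonnegativity of the $\rho^i_j$ to see that any idempotent of $\cA$ lying in $\cB$ has index set a union of $\sim^*$-classes. For the converse, the paper argues structurally: it takes for granted that $\cB$ has \emph{some} basis of primitive idempotents, picks the one $F=\sum_{h\in\cH}E_h$ containing a given $E_j$, and observes that since $\pi(E_j)\in\cB$ has a nonzero $F$-component (as $\rho^j_j>0$), every $h\in\cH$ satisfies $\rho^h_j>0$, forcing $\cH\subseteq\cJ_j$ and hence $\cH=\cJ_j$. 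No numerical identity is needed. You instead verify $\pi(F_j)=F_j$ directly by proving $\sum_k\rho^{j'}_k=w$ via the trace of $\pi(E_{j'})$ and the Krein symmetry $m_k\rho^k_{j'}=m_{j'}\rho^{j'}_k$. Your route is slightly longer but has the merit of making this identity explicit (incidentally, it also falls out of the standard relation $\sum_k q^{j'}_{kh}=m_h$ together with $\sum_{h\in\cJ}m_h=w$), whereas the paper's argument is shorter by leaning on the a priori existence of primitive idempotents in the subalgebra $\cB$.
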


\begin{proof} We first note that each primitive idempotent of $\cB$ is a sum of
primitive idempotents of $\cA$, and because $\sum_{j=0}^{d}E_j=I \in
\cB$, each $E_j$ appears in exactly one such sum.  Then for each
$j=0,1,\dots,d$, we use \eqref{Epi}, \eqref{Eimprim}, and
\eqref{Erhodef} to find
$$\pi(E_j)=n \sum_{h \in \cJ} E_j \circ E_h =\frac{1}{w}\sum_{i=0}^d \rho^i_j E_i.$$
Thus, if $\cH \subseteq \{0,\ldots,d\}$ and
$F:=\sum_{j \in \cH}E_j$ is any idempotent of $\cB$,
then
$$F=\pi(F)= \sum_{j \in \cH} \pi(E_j)=\frac{1}{w} \sum_{i=0}^d \sum_{j \in \cH} \rho^i_j E_i.$$
This implies that if $i \notin \cH$, then $\sum_{j \in \cH}
\rho^i_j =0$, i.e., if $i \notin \cH$ and $j \in \cH$, then $i
\nsim^* j$, which proves that $\cH$ is a union of equivalence
classes of $\sim^*$.

On the other hand, take any $0\le j\le d$ and consider the primitive
idempotent $F:=\sum_{h \in \cH}E_h$ for which $j \in \cH$. Because
$\frac{1}{w}\sum_{i=0}^d \rho^i_j E_i =
\pi(E_j)\in \cB$, it is a linear combination of
primitive idempotents of $\cB$ with a nonzero coefficient for $F$
because $\rho^j_j>0$. So, if
$h \in \cH$, then $\rho^h_j>0$, which shows
that $h$ and $j$ are in the same equivalence class. We may
therefore conclude that $\cH$ is an equivalence class of
$\sim^*$.

Thus, the primitive idempotents of $\cB$ are $F_j$,
$j=0,1,\dots,e$. For $j' \in \cJ_{j}$, it then also follows that
$\pi(E_{j'})=\frac{1}{w}\sum_{i \sim^*j'} \rho^i_{j'} E_i$ is a
multiple of one of these idempotents. So
$\rho^i_{j'}=\rho_{j'}$ for all $i \sim^*j'$, and
$\pi(E_{j'})=\frac{\rho_{j'}}{w}F_{j}$.
\end{proof}

\noindent
By working out the products $F_i \circ F_j$, the Krein parameters
$\tilde{q}^h_{ij}$ of the subscheme can now be easily expressed
in terms of those of the original scheme as
$$\tilde{q}^h_{ij}=\frac{1}{w} \sum_{i' \in \cJ_i, j' \in
\cJ_j} q^{h'}_{i'j'},$$ for each $h' \in \cJ_h$. Moreover, it
follows that the eigenmatrices $\tilde{P}$ and $\tilde{Q}$ of
the subscheme are given by

\begin{gather}\label{EPQtilde}
\begin{aligned}
&\tilde{P}_{ji} =  P_{j'i}, \qquad  i \in \cI, j' \in \cJ_j,   j=0,1,\dots,e; \\
&\tilde{Q}_{ij} = \frac{1}{w}\sum_{j' \in \cJ_j}Q_{ij'}, \qquad i \in \cI,  j=0,1,\dots,e.
\end{aligned}
\end{gather}
However, the second part of Lemma \ref{idempotents} can be used
to get another useful expression of the Krein parameters of the
subschemes.

\begin{lemma}\label{subkrein} If $i' \in \cJ_i$, $j' \in \cJ_j$, then
$$\tilde{q}^h_{ij}=\frac{1}{\rho_{i'} \rho_{j'}} \sum_{h' \in \cJ_h} \rho_{h'} q^{h'}_{i'j'} .$$
\end{lemma}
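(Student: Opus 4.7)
The plan is to apply the projection $\pi$ from (\ref{Epi}) to the Schur product $E_{i'} \circ E_{j'}$ and compute the result in two different ways; comparing the two expressions in the basis of primitive idempotents of $\cB$ will yield the claimed formula.

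First, I would use the Krein relation $E_{i'} \circ E_{j'} = \frac{1}{v}\sum_{h'=0}^d q^{h'}_{i'j'} E_{h'}$ and apply $\pi$ term by term. By the second part of Lemma \ref{idempotents}, $\pi(E_{h'}) = \frac{\rho_{h'}}{w} F_{h}$ whenever $h' \in \cJ_h$, so grouping the summands by equivalence class gives
\[
\pi(E_{i'} \circ E_{j'}) \;=\; \frac{1}{vw} \sum_{h=0}^{e}\biggl(\sum_{h' \in \cJ_h} \rho_{h'}\, q^{h'}_{i'j'}\biggr) F_h.
\]

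Second, I would use the multiplicativity of $\pi$ with respect to $\circ$ noted right after (\ref{Epi}): this yields $\pi(E_{i'} \circ E_{j'}) = \pi(E_{i'}) \circ \pi(E_{j'}) = \frac{\rho_{i'}\rho_{j'}}{w^2}\, F_i \circ F_j$. Since $F_0,\ldots,F_e$ are the primitive idempotents of $\cB$ by Lemma \ref{idempotents}, each $F_j$ is block-diagonal with $w$ identical $n \times n$ blocks, each of which is a primitive idempotent of the subscheme on a fibre. Hence the Schur product in $\cB$ is governed by the Krein relation of the subscheme, giving $F_i \circ F_j = \frac{1}{n}\sum_{h=0}^e \tilde{q}^h_{ij}\, F_h$, and therefore
\[
\pi(E_{i'} \circ E_{j'}) \;=\; \frac{\rho_{i'}\rho_{j'}}{w^2 n} \sum_{h=0}^e \tilde{q}^h_{ij}\, F_h.
\]

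Finally, I would equate the two expressions, using $v = wn$, and compare coefficients of $F_h$, which are linearly independent as primitive idempotents of $\cB$. This gives $\rho_{i'}\rho_{j'}\,\tilde{q}^h_{ij} = \sum_{h' \in \cJ_h} \rho_{h'}\, q^{h'}_{i'j'}$, and dividing by $\rho_{i'}\rho_{j'}$ (both nonzero, since $i' \sim^* i'$ and $j' \sim^* j'$) yields the claim. The only slightly subtle point is the identification $F_i \circ F_j = \frac{1}{n}\sum_h \tilde{q}^h_{ij} F_h$, which follows from the block structure of the $F_j$'s together with the observation that each block is a primitive idempotent of a subscheme on $n$ vertices; once this is in hand, the statement is a direct consequence of the double computation.
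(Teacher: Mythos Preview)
Your proof is correct and follows essentially the same approach as the paper's: both compute $\pi(E_{i'}\circ E_{j'})$ in two ways, once via the Krein expansion of $E_{i'}\circ E_{j'}$ together with $\pi(E_{h'})=\frac{\rho_{h'}}{w}F_h$, and once via the multiplicativity of $\pi$ together with $\pi(E_{i'})=\frac{\rho_{i'}}{w}F_i$. The paper's proof is simply a one-line compression of your argument, leaving the identification $F_i\circ F_j=\frac{1}{n}\sum_h \tilde q^h_{ij}F_h$ implicit where you spell it out.
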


\begin{proof} Let $i' \in \cJ_i$, $j' \in \cJ_j$, then
\begin{equation*}
\rho_{i'}
\rho_{j'}F_i \circ F_j = w^2 \pi(E_{i'} \circ E_{j'}) =
\frac{w^2}{v} \sum_{h'=0}^d
q^{h'}_{i'j'}\pi(E_{h'})=\frac{1}{n} \sum_{h=0}^e \sum_{h' \in
\cJ_h} \rho_{h'} q^{h'}_{i'j'} F_h,
\end{equation*}
which was to be proven.
\end{proof}

\section{Uniform imprimitive schemes}
\label{sec:uniform}

So far, we have given a selective review of imprimitive association
schemes, focusing on the eigenspaces and the Krein parameters of subschemes.
Exploring imprimitivity further, the main goal of this section
is to reconcile the concept of dismantlable
association scheme introduced in \cite{mmw} with the concept of uniform
association scheme introduced earlier in \cite{Huninform}.

\subsection{Dismantlability and uniformity}
\label{Subsec:dismunif}

Besides the usual subschemes on each fibre, it was proven in
\cite[Thm. 4.7]{mmw} that a cometric Q-antipodal scheme has
so-called dismantled schemes on each union of fibres. To
generalize this result, and to obtain more information on these
dismantled schemes in the subsequent sections, we first define
the following.

For a subset $Y$ of the vertices, let $I^Y$ be the $v \times v$
diagonal $(0,1)$-matrix with $(I^Y)_{xx}=1$ if and only if $x
\in Y$. For a matrix $M$, we let $$M^{YZ}:=I^Y M I^Z$$ for
subsets $Y$ and $Z$. Put differently, $M^{YZ}$ is the $v \times
v$ matrix containing the submatrix $M_{YZ}$, and that is zero
everywhere else. Algebraically, in most of the following it
turns out to be more convenient to work with the matrices
$M^{YZ}$ than with the usual submatrices $M_{YZ}$, although
essentially they are the same. For a relation $R$ we define
related notation
$$R^{YZ}:=R \cap (Y \times Z).$$
In case $Y=Z$, we often use shorthand
notation $M^{Y}:=M^{YY}$ and $R^{Y}:=R^{YY}$. For a set
$\cM$ of matrices, we let $\cM^Y:=\{M^Y | M
\in \cM\}$ and for a set
$\cR$ of relations, we let $\cR^Y:=\{R^Y \neq \emptyset | R
\in \cR\}$.

\begin{definition}An imprimitive association scheme $(X,\cR)$
is called
dismantlable if $(Y, \cR^Y)$ is an association scheme for each
union $Y$ of fibres. In this case, the association scheme $(Y,
\cR^Y)$ is called a dismantled scheme on $Y$, if $Y$ is the
union of at least two fibres.
\end{definition}

\noindent This definition first appears in \cite{mmw} where
the structure of cometric Q-antipodal association schemes
is considered.  We shall see in Corollary \ref{Cdismantledallsame}
that two dismantled schemes $(Y, \cR^Y)$ and $(Y', \cR^{Y'})$
of $(X,\cR)$ with $|Y|=|Y'|$ always have the same parameters.

Bipartite schemes, i.e., imprimitive schemes with
two fibres, are trivially dismantlable. Other examples of
dismantlable schemes are the so-called uniform association
schemes, as defined by Higman in his paper on four-class
imprimitive schemes \cite{Htriality} and more generally in an
unpublished manuscript \cite{Huninform}. Informally speaking,
an imprimitive scheme is uniform if the intersection numbers
are divided uniformly over the fibres whereas, in the general
case, only the valencies enjoy this property.

To define uniform schemes precisely, we first introduce a bit of
notation. Consider an imprimitive scheme with a trivial quotient
scheme, i.e., where the quotient is a complete graph. As in Equation
\eqref{Eimprim}, let $\cI$ denote the indices of relations that
occur in the subschemes. For fibres $U$ and $V$, we denote by
$\cI(U,V)$ the index set of relations that occur between $U$ and
$V$; so $A_i^{UV}$ is nonzero precisely if $i \in \cI(U,V)$. Because
we are assuming that the quotient is a complete graph, $\cI(U,V)$
equals $\cI$ if $U=V$, and $\cI(U,V)=\overline{\cI}$ (the complement
of $\cI$) if $U \neq V$.

\begin{definition}
\label{Dunifscheme}
An imprimitive association scheme is called uniform if its
quotient scheme is trivial, and if there are integers
$a_{ij}^h$ such that for all fibres $U,V,$ and $W,$ and $i \in
\cI(U,V)$, $j \in \cI(V,W)$, we have
\begin{equation}\label{coh_conf_prod}
A_i^{UV} A_j^{VW} = \sum_h a_{ij}^h A_h^{UW}.
\end{equation}
\end{definition}

\noindent It is easily seen that in this case $p_{ij}^h = a_{ij}^h$ if $i \in
\cI$ or $j \in \cI$, $p_{ij}^h = (w-1) a_{ij}^h$ if $i,j \notin \cI$ and $h \in
\cI$, and $p_{ij}^h= (w-2) a_{ij}^h$ if $i,j,h \notin \cI$, i.e., the
intersection numbers are divided uniformly over the relevant fibres. Note that
bipartite schemes are trivially uniform. But in general, an
 antipodal distance-regular cover of a complete graph, while
 having a complete quotient, is not uniform; for example, two
 adjacent vertices in the icosahedron have only two common
 neighbors so $p_{11}^1$ is not divisible by $w-2=4$. However, any imprimitive $d$-class
association scheme with only one relation across fibres (a complete
multipartite graph) is uniform. Such a scheme can easily be constructed as a
wreath product scheme \cite[p.\ 44]{weis}, \cite[p.\ 69]{bailey} of a trivial scheme
and an arbitrary scheme. Also the tensor product \cite[p.\ 44]{weis} of a
one-class scheme and an arbitrary scheme is uniform. (This is also called the
``direct product'' \cite[p.\ 62]{bailey}.) In this paper, we call a scheme
decomposable if it is has the same parameters as a wreath product or tensor
product scheme.

\begin{theorem}
\label{uniformdismantle}
A uniform scheme is dismantlable.
Any dismantled scheme of a uniform scheme is also uniform.
\end{theorem}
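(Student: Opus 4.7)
My plan is to prove both claims by directly computing the product $A_i^Y A_j^Y$ for any union $Y$ of fibres in the uniform scheme $(X, \cR)$. The axioms that $\cR^Y$ partitions $Y \times Y$, contains $\Delta_Y$, and is closed under transpose are immediate: they follow from restriction of the original partition $\cR$ together with the symmetry of each $R_i$. The substantive step is to show that $A_i^Y A_j^Y$ is a nonnegative integer combination of the matrices $\{A_h^Y\}$; the resulting coefficients will be the intersection numbers of $(Y, \cR^Y)$, and their existence yields the scheme structure.

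First I will write $I^Y = \sum_{U \subseteq Y} I^U$, summed over the fibres $U$ contained in $Y$, so that $A_i^Y = \sum_{U, V \subseteq Y} A_i^{UV}$. Because $I^V I^{V'} = \delta_{VV'} I^V$, the product expands cleanly as $A_i^Y A_j^Y = \sum_{U, V, W \subseteq Y} A_i^{UV} A_j^{VW}$. To each term I will apply the uniformity equation (\ref{coh_conf_prod}), after which the task reduces to counting contributions over the fibre triples $(U, V, W)$ for which the product is nonzero.

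A case analysis on whether $i, j$ lie in $\cI$ (within-fibre type) or $\overline{\cI}$ (between-fibre type) determines the allowed triples: both in $\cI$ forces $U = V = W$; a mixed case forces $U = V \neq W$ or its mirror; both in $\overline{\cI}$ requires $U \neq V$ and $V \neq W$, which splits further by whether $U = W$. The key point, and where uniformity is used essentially, is that $a_{ij}^h$ does not depend on the intermediate fibre $V$; summing over $V$ therefore produces a clean multiplicative factor---either $k - 1$ (when $U = W$ is fixed) or $k - 2$ (when $U \neq W$ is fixed), with $k$ the number of fibres of $Y$. Gathering terms in each case, the coefficient of $A_h^Y$ in $A_i^Y A_j^Y$ comes out to one of $a_{ij}^h$, $(k-1) a_{ij}^h$, or $(k-2) a_{ij}^h$, all nonnegative integers. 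Symmetry of each $A_h^Y$ then also gives commutativity of the resulting Bose-Mesner algebra.

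For the second claim, uniformity of $(Y, \cR^Y)$ follows without further work: the uniformity equation of $(X,\cR)$ is local to three fibres $U, V, W$, so when these lie in $Y$ the same equation---with the very same constants $a_{ij}^h$---serves as the uniformity equation for $(Y, \cR^Y)$. The quotient of $(Y, \cR^Y)$ is trivial because for any two distinct fibres $U, V \subseteq Y$ the index set $\cI(U, V) = \overline{\cI}$ is unchanged from $(X, \cR)$, so the induced quotient on the fibres of $Y$ is again a complete graph. The main obstacle is really the case analysis in the first part---not conceptually deep, but requiring care with the bookkeeping over fibre triples and with the factors $k - 1$ and $k - 2$ arising from the $V$-independence of $a_{ij}^h$, which is exactly the content of the uniformity hypothesis.
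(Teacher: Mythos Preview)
Your proof is correct and is essentially the same approach as the paper's, just spelled out in full: the paper's proof consists of the single sentence ``These claims follow in a straightforward way from the definition of a uniform scheme,'' and your case analysis on fibre triples $(U,V,W)$ is exactly the straightforward computation the authors had in mind (indeed, the factors $k-1$ and $k-2$ you derive are precisely the relations $p_{ij}^h=(w-1)a_{ij}^h$ and $p_{ij}^h=(w-2)a_{ij}^h$ the paper records immediately after Definition~\ref{Dunifscheme}).
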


\begin{proof} These  claims follow in a straightforward way from
the definition of a uniform scheme.
\end{proof}

\noindent In Section \ref{sec:uniformcoco} we will show the
converse of this proposition, namely that every dismantlable
scheme is uniform.

\subsection{Linked systems and triality}

In Section \ref{HOSI} we described what we (and Neumaier
\cite{neumaier}) called a linked system of partial
$\lambda$-geometries. This linked system is in fact a uniform
association scheme with three fibres of size 50. The term
linked system was coined by Cameron \cite{cameronlinked} for
linked systems of symmetric designs (see also Section
\ref{sec:threeclass}).

\begin{example}
There are three non-isomorphic $(16,6,2)$ symmetric block designs. Each
incidence structure gives us a three-class bipartite association scheme
with two fibres of size sixteen. But only one of these can be extended to a
linked system of symmetric designs with eight fibres of size sixteen. This
is a uniform cometric scheme on $128$ vertices and is the first example in
an infinite family which arises from the Kerdock codes \cite{cs} (see
also \cite{noda,mmw}).
\end{example}

\noindent Following Neumaier, and also Cameron and Van
Lint \cite{cvl} (see Section \ref{sec:vls}), we will use the
term linked system informally for the combinatorial structure
underlying a uniform association scheme. Note also that Higman
\cite{Htriality} mentions the term ``system of uniformly linked
strongly regular designs''.  We will now describe an
infinite family of such systems, which we refer to as
Higman's ``triality schemes''.

\begin{example}
\label{Ex-triality} The dual polar graph $D_4(q)$ is a cometric bipartite
distance-regular graph with diameter four defined on the $2(q+1)(q^2+1)(q^3+1)$ maximal isotropic
(four-dimensional) subspaces in $GF(q)^8$ with a quadratic form of Witt index
4. One can extend this graph by a third fibre containing the
$(q+1)(q^2+1)(q^3+1)$ isotropic one-dimensional subspaces, where a four-dimensional subspace
is adjacent to the one-dimensional subspaces that it contains. This extended graph generates
a uniform four-class association scheme that is cometric Q-antipodal. Higman
\cite{Htriality} explains how this scheme is obtained from classical triality
related to the group $O^+_8(q)$, and also how some other sporadic examples,
such as the one in Section \ref{HOSI}, have a triality related to some group.
Higman also mentions that related to these examples are certain coherent
configurations.
\end{example}

\section{Coherent configurations and uniformity}
\label{Sec:coco}

To understand uniformity better, we will need to recall certain combinatorial
structures that are more general than association schemes.
As we will see, a (symmetric) $d$-class association scheme can be viewed as
a homogeneous coherent configuration of rank $d+1$ in which all relations are
symmetric.

\subsection{Definitions and algebraic automorphisms}
\label{Subsec:coco}

A coherent configuration is a pair $(X,\cS)$ consisting of a finite set $X$ of
size $v$ and a set $\cS$ of binary relations on $X$ such that
\begin{itemize}
\item $\cS$ is a partition of $X \times X$;
\item the diagonal relation $\Delta_X$ is the union of some relations in
    $\cS$;
\item for each $R\in\cS$ it holds that $R^\top \in \cS$;
\item there exist integers $p_{ST}^R$ such that
$$ \left| \left\{ z \in X | (x,z) \in S \ {\mbox {\rm and}} \ (z,y) \in T\right\}
\right| = p_{ST}^R$$ whenever $(x,y) \in R$, for each $R,S,T\in\cS$.
\end{itemize}
The relations of $\cS$ are called basic relations of the
configuration. A basic relation $R$ is called a diagonal
relation if $R\subseteq \Delta_X$. Each diagonal relation is of
the form $\Delta_U$ for some $U\subseteq X$. Because the
relations of $\cS$ form a partition of $X \times X$, the
diagonal relations of $\cS$ form a partition of $\Delta_X$.
Thus there exists a uniquely determined partition of $X$ into a
set $\cF_{\cS}$ of $w$ fibres such that $\Delta_{U}
\in \cS$ for each $U \in \cF_{\cS}$. The numbers $v=|X|$ and
$|\cS|$ are called the order and the rank of the configuration,
respectively.

Given $R\in\cS$ and $x\in X$ we define $R(x):=\{y\in
X\,|(x,y)\in R\}$. For any basic relation $R$ we define its
projections onto the first and second coordinates as
$\pr_1(R):=\{x\in X\,|\,R(x)\neq\emptyset\}$ and
$\pr_2(R):=\pr_1(R^\top)$. One can show that these projections
are fibres. So, each basic relation $R$ is contained in
$\pr_1(R) \times \pr_2(R)$. We write $\cS^{UV}$ for the set of
all basic relations $R\in\cS$ with $\pr_1(R)=U, \pr_2(R)=V$,
and $r_{UV}:=|\cS^{UV}|$. Note that $r_{UV}=r_{VU}$ and
$|\cS|=\sum_{U,V}r_{UV}$. The $w\times w$ integer symmetric
matrix $(r_{UV})$ is called the type of the configuration.

The last axiom
of the definition of coherent configuration implies that
$$A_SA_T=\sum_Rp_{ST}^RA_R.$$
It thus follows that the vector subspace of $M_X(\C)$ spanned by
the adjacency matrices $A_R$, $R\in\cS$ is a subalgebra of the
full matrix algebra $M_X(\C)$. It also explains why the
intersection numbers $p_{ST}^R$ are sometimes called structure constants.
The subalgebra is called the adjacency algebra of $\cS$ and
will be denoted by $\C[\cS]$. This algebra has the following
properties:
\begin{itemize}
\item it is closed with respect to (ordinary) matrix multiplication;
\item it is closed with respect to entrywise (Schur-Hadamard)  multiplication $\circ$;
\item it is closed with respect to transposition ${}^\top$;
\item it contains the identity matrix $I$ and the all-ones
    matrix $J$.
\end{itemize}
Any subspace of $M_X(\C)$ which satisfies these conditions is
called a coherent algebra. There is a one-to-one correspondence
between coherent configurations on $X$ and coherent algebras in
$M_X(\C)$, i.e., each coherent algebra is the adjacency algebra
of a uniquely determined coherent configuration.

An algebraic automorphism of $\cS$ is a permutation $\sigma \in \sym(\cS)$
which preserves the structure constants, that is, $p_{ST}^R =
p_{\sigma(S)\sigma(T)}^{\sigma(R)}$ for all $R,S,T\in\cS$ (an algebraic
automorphism of an association scheme is also called a pseudo-automorphism, cf.
\cite{ikuta}). One can extend such a $\sigma$ to a linear map from $\C[\cS]$
into itself by setting $\sigma(\sum_{R\in\cS}\alpha_R
A_R):=\sum_{R\in\cS}\alpha_RA_{\sigma(R)}$. This yields an automorphism of the
adjacency algebra; the linear map defined in this way preserves the ordinary
matrix product, Schur-Hadamard product, and matrix transposition, i.e.,
$\sigma(AB)=\sigma(A)\sigma(B), \sigma(A \circ B)=\sigma(A) \circ \sigma(B)$,
and $\sigma(A^{\top})=\sigma(A)^{\top}$ for all $A,B \in \C[\cS]$. Vice versa,
each permutation $\sigma$ which preserves these three operations is an
algebraic automorphism of $\cS$.

The algebraic automorphisms of $\cS$ form a group (which is a
subgroup of $\sym(\cS)$), which will be denoted by
$\aaut(\cS)$. Any subgroup $G\leq\aaut(\cS)$ gives rise to a
fusion configuration $\fuse{\cS}{G}$ whose basic relations are
$\cup_{R \in O}R$, $O \in \Omega$, where $\Omega$ is the set of
orbits of $\cS$ under the action of $G$.
The adjacency algebra of $\fuse{\cS}{G}$ can be characterized
as the subspace of $\C[\cS]$ consisting of all $G$-invariant
elements of $\C[\cS]$.

The matrices $I^{U}, U \in \cF_{\cS}$ are the only idempotent
matrices of the standard basis $\{ A_R | R \in \cS\}$ of $(X,\cS)$.
Therefore any algebraic automorphism $\sigma$ of $\cS$ permutes
these diagonal matrices, hereby also inducing a permutation $U
\mapsto \sigma(U)$ on the set of fibres. So, instead of
$\sigma(I^{U})$, we could also write $I^{{\sigma(U)}}$.

If $G\leq\aaut(\cS)$ acts transitively on the set of fibres, then
$\fuse{\cS}{G}$ is homogeneous, that is, it is a coherent
configuration with one fibre, or in other words, a
--- possibly nonsymmetric
--- association scheme.

\subsection{Uniformity in coherent configurations}
\label{sec:uniformcoco}

We now make a fundamental observation about uniform
association schemes. Consider such a scheme $(X,\cR)$, with
related (generic) notation as above. It follows immediately
from \eqref{coh_conf_prod} that the set of relations
$\cS:=\{R_i^{UV}| i \in \cI(U,V); \,  U,V \in \cF\}$ forms a
coherent configuration, with the same fibres as those of the
association scheme, i.e., $\cF_{\cS}=\cF$. Moreover, any
$\sigma \in \sym(\cF_{\cS})$ acts as a permutation on $\cS$ by
$\sigma(R^{UV}_i):=R^{\sigma(U)\sigma(V)}_{i}$, for $i\in
\cI(U,V)=\cI(\sigma(U),\sigma(V))$. In this way, $\sigma$ is an
algebraic automorphism of $\cS$, because if $i\in \cI(U,V)$,
$j\in \cI(V,W)$, $h\in \cI(U,W)$, then
$$p_{R^{UV}_i
R^{VW}_j}^{R^{UW}_h} = a^h_{ij}= p_{R^{\sigma(U)\sigma(V)}_i
R^{\sigma(V)\sigma(W)}_j}^{R^{\sigma(U)\sigma(W)}_h}=p_{\sigma(R^{UV}_i)\sigma(
R^{VW}_j)}^{\sigma(R^{UW}_h)}.$$
Moreover, the fusion scheme
$\fuse{\cS}{\sym(\cF_{\cS})}$ is the association scheme that we
started from. These observations are the motivation for the
definition of a uniform coherent configuration. But first we
need a little more terminology. We say that two triples $(U,V,W)$
and $(U',V',W')$ of fibres have the ``same type'' if and only if
there is a permutation $\sigma$ of the fibres such that
$\sigma((U,V,W))=(U',V',W')$.

\begin{definition}
A coherent configuration $(X,\cS)$ with at least two fibres is
called uniform if there are complementary sets of indices
$\cI_{\cS} \ni 0$ and $\overline{\cI_{\cS}}$ of sizes
$e_{\cS}+1$ and $\ell_{\cS}$ (say), respectively, such that the
basic relations $R\in \cS$ can be relabeled as $R=S^{UV}_i$
($U=\pr_1(R)$, $V=\pr_2(R)$, $i\in \cI_\cS \cup
\overline{\cI_{\cS}}$) such that
\begin{itemize}
\item $S^{UU}_0 = \Delta_U$ for each fibre $U$;
\item $\cS^{UU}=\{S^{UU}_i | i\in \cI_{\cS}\}$ for each
    fibre $U$ and $\cS^{UV}=\{S^{UV}_i | i\in
    \overline{\cI_{\cS}}\}$
for all
    fibres $U\neq V$;
\item $(S^{UV}_i)^\top = S^{VU}_i$ for all fibres $U\neq
    V$;
\item for any two triples $(U,V,W)$ and $(U',V',W')$ of the same
    type and any $i\in \cI_{\cS}(U,V)$, $j\in \cI_{\cS}(V,W)$,
    $h\in \cI_{\cS}(U,W)$, it holds that
\begin{equation}\label{coco_uniform}
 p_{S^{UV}_i S^{VW}_j}^{S^{UW}_h} =  p_{S^{U'V'}_i S^{V'W'}_j}^{S^{U'W'}_h}.
\end{equation}
\end{itemize}
\end{definition}

\noindent In this definition $\cI_{\cS}(U,V)$ is defined in the same way as
before: it equals $\cI_{\cS}$ if $U=V$, and $\overline{\cI_{\cS}}$ otherwise.
Without loss of generality we will assume that $\cI_{\cS} \cup
\overline{\cI_{\cS}} = \{0,\dots,e_{\cS}+\ell_{\cS}\}$.

It is clear from the above observations that from a uniform
association scheme one obtains a uniform coherent configuration
with $\cF_{\cS}=\cF$, $\cI_{\cS}=\cI$,
$\overline{\cI_{\cS}}=\overline{\cI}$, $e_{\cS}=e$,
$\ell_{\cS}=d-e$, and $S_i^{UV}=R_i^{UV}$.

Conversely, given a uniform coherent configuration, any
permutation $\sigma$ of the fibres acts --- just as in Section
\ref{Subsec:coco} --- as an algebraic automorphism of $\cS$, by
\eqref{coco_uniform}. Thus, the relations $R_i:=\cup_{U,V}
S_i^{UV}$ are the relations of the $(e_{\cS}+\ell_{\cS})$-class
association scheme $\fuse{\cS}{\sym(\cF_{\cS})}$. It is clear
that this scheme is imprimitive with $\cF=\cF_{\cS}$ and
$\cI=\cI_{\cS}$, and that its quotient scheme is trivial.
Because \eqref{coh_conf_prod} follows from
\eqref{coco_uniform}, this scheme is uniform. We have thus
shown a one-to-one correspondence between uniform association
schemes and uniform coherent configurations.

\begin{proposition}\label{one-one} If $(X,\cR)$ is a uniform association
scheme, then $(X,\{R_i^{UV}| i \in \cI(U,V); U,V \in \cF\})$ is
a uniform coherent configuration. Conversely, if $(X,\cS)$ is a
uniform coherent configuration, then after relabeling
$(X,\{\cup_{U,V} S_i^{UV}| i = 0,\dots,e_{\cS}+\ell_{\cS}\})$ is
a uniform association scheme on $X$.
\end{proposition}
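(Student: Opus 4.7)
The plan is to verify both implications essentially by checking axioms, exploiting the fact that the preceding paragraphs have already done most of the combinatorial bookkeeping.

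For the forward direction, given a uniform association scheme $(X,\cR)$, I would define $\cS := \{R_i^{UV} \mid i \in \cI(U,V); U,V \in \cF\}$ and verify the four axioms of a coherent configuration. The partition axiom is immediate since $\cR$ partitions $X\times X$ and each pair of fibres partitions this into blocks. The diagonal $\Delta_X$ decomposes as $\bigsqcup_{U\in\cF} R_0^{UU}$. Symmetry of the $R_i$ gives $(R_i^{UV})^\top = R_i^{VU}$. The intersection numbers $p^{R_h^{UW}}_{R_i^{UV} R_j^{VW}}$ equal $a_{ij}^h$ by Equation~(\ref{coh_conf_prod}) and vanish for triples of fibres that do not ``line up.'' Uniformity of $\cS$ then follows by taking $\cI_\cS = \cI$, $\overline{\cI_\cS} = \overline{\cI}$, $S_i^{UV} = R_i^{UV}$; the four bullets of the uniform coherent configuration definition are precisely what an imprimitive scheme with trivial quotient gives, and Equation~(\ref{coco_uniform}) holds because the constants $a_{ij}^h$ of (\ref{coh_conf_prod}) are by hypothesis independent of the choice of fibres $U,V,W$.

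For the converse, given a uniform coherent configuration $(X,\cS)$, I would set $R_i := \bigcup_{U,V} S_i^{UV}$ for each admissible index $i$ and verify that $(X,\{R_i\})$ is an association scheme. The partition of $X\times X$ and the identity $R_0 = \bigsqcup_U S_0^{UU} = \Delta_X$ are immediate; the symmetry $R_i^\top = R_i$ follows from the third bullet of the uniform coherent configuration definition (noting separately that diagonal relations $S_0^{UU}$ are trivially symmetric). The heart of the argument is the intersection number axiom: for $(x,y)\in R_h$ with $x\in U$, $y\in W$, I would write
\[
|\{z : (x,z)\in R_i,\; (z,y)\in R_j\}| = \sum_{V\in\cF} p^{S_h^{UW}}_{S_i^{UV} S_j^{VW}}
\]
and group the summands according to the type of the triple $(U,V,W)$. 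By (\ref{coco_uniform}) the summand depends only on this type, so the total depends only on the type of $(U,W)$ — that is, only on whether $U=W$ or $U\neq W$ — and hence only on $h$. This yields the intersection numbers $p_{ij}^h$ and, by inspection of the three possible cases (as noted after Definition~\ref{Dunifscheme}), the uniformity property (\ref{coh_conf_prod}) with $a_{ij}^h = p^{S_h^{UW}}_{S_i^{UV} S_j^{VW}}$.

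The main obstacle, in my view, is purely bookkeeping: making sure that the relabeling in the converse is coherent across fibre pairs (so that $R_i$ is unambiguously defined) and tracking the ``type'' distinctions correctly, since the type of $(U,V,W)$ collapses to a small number of cases depending on which of $U,V,W$ coincide. Once this is in place, the imprimitivity of the resulting scheme with $\cI = \cI_\cS$, and the triviality of its quotient (a complete graph on $\cF$), both fall out of the construction, completing the bijection between the two classes of objects.
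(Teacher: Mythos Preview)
Your proposal is correct and follows essentially the same route as the paper. The paper's argument is contained in the paragraphs immediately preceding and following the definition of a uniform coherent configuration, and the proposition merely summarizes it. The one stylistic difference worth noting is in the converse direction: rather than verifying the association-scheme axioms by hand via your summation over fibres $V$ grouped by type, the paper invokes the algebraic-automorphism machinery set up in Section~\ref{Subsec:coco}. Namely, condition~(\ref{coco_uniform}) says precisely that every $\sigma\in\sym(\cF_\cS)$ is an algebraic automorphism of $\cS$, so the fusion $\fuse{\cS}{\sym(\cF_\cS)}$ is automatically a homogeneous coherent configuration whose basic relations are the $R_i=\bigcup_{U,V}S_i^{UV}$; uniformity then follows because (\ref{coh_conf_prod}) is a special case of (\ref{coco_uniform}). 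Your explicit summation is exactly the unpackaged content of this fusion argument, so the two are equivalent; the paper's version just leans on machinery it already built. One small caveat: your remark about symmetry handles $S_0^{UU}$ but not the other within-fibre relations $S_i^{UU}$ for $i\in\cI_\cS$; neither the paper nor the definition of uniform coherent configuration addresses this explicitly, so it is being treated as implicit.
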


\noindent We will now use this one-to-one correspondence to show
that every dismantlable association scheme is uniform.

\begin{theorem}
\label{dismunif}
An association scheme is dismantlable if and only if it is uniform.
\end{theorem}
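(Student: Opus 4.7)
The forward implication is Theorem~\ref{uniformdismantle}. For the converse, my plan is to build a uniform coherent configuration $(X, \cS)$ from a dismantlable scheme and invoke the correspondence in Proposition~\ref{one-one}. I would take $\cS := \{R_i^{UV} : U, V \in \cF, \, R_i^{UV} \neq \emptyset\}$; the partition, diagonal, and symmetry axioms hold immediately, so the real work is to exhibit structure constants for $\cS$ and then to upgrade them to uniformity.

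For the structure constants, the key quantity is $|\{z \in V : (x,z) \in R_i, (z,y) \in R_j\}|$ for $x \in U$, $y \in W$ with $(x,y) \in R_h$. Applying dismantlability to $Y := U \cup V \cup W$ (or to a smaller union when some of $U, V, W$ coincide), the dismantled scheme $(Y, \cR^Y)$ has a well-defined intersection number $\tilde{p}_{ij}^h$ counting such $z$ over all of $Y$. A short case analysis on whether $i, j, h$ lie in $\cI$ or in $\overline{\cI}$ shows that the fibre constraints force every valid $z$ to lie in $V$ --- for instance, $i \in \overline{\cI}$ and $x \in U$ force $z \notin U$, while $j \in \overline{\cI}$ and $y \in W$ force $z \notin W$ --- so the count equals $\tilde{p}_{ij}^h$ and is constant in $(x,y)$.

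To promote coherence to uniformity, the structure constants attached to triples of the same type must agree, even though $\tilde{p}_{ij}^h(U \cup V \cup W)$ and $\tilde{p}_{ij}^h(U' \cup V' \cup W')$ are a priori unrelated. The strategy is to combine intersection-number equations from dismantled schemes on various unions --- in particular, on 4-fibre unions containing two triples at once, together with the relevant 3-fibre dismantled schemes and the original scheme's own intersection numbers --- to derive enough linear relations to force all the desired equalities. The same setup is what rules out a non-trivial quotient scheme: any non-triviality would place two triples of the same type into a common class of a dismantled scheme while yielding disagreeing counts. Once uniformity of $(X, \cS)$ is in hand, Proposition~\ref{one-one} delivers uniformity of $(X, \cR) = \fuse{\cS}{\sym(\cF_{\cS})}$. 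The main obstacle is exactly this combinatorial reconciliation across triples --- essentially the content of Corollary~\ref{Cdismantledallsame} --- for which one must juggle dismantled schemes on up to four (or more) fibres at once in order to extract equalities among the candidate structure constants.
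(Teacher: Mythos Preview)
Your proposal is correct and follows essentially the same route as the paper: build the coherent configuration $\cS = \{R_i^{UV}\}$, verify its structure constants via the intersection numbers of dismantled schemes on at most three fibres (the case analysis you describe is exactly the paper's), and then establish uniformity by comparing these constants across triples of the same type using dismantled schemes on three and four fibres. The only deviation is in handling triviality of the quotient, which the paper dispatches first and more directly --- the dismantled scheme on any three fibres has a three-point quotient, which is automatically the complete graph, forcing $\cI(U,V)=\cI(V,W)=\overline{\cI}$ --- rather than folding it into the reconciliation argument as you suggest.
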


\begin{proof}
One direction has already been shown in Theorem \ref{uniformdismantle}.

Let $(X,\cR)$ be a dismantlable association scheme. Because
bipartite schemes are uniform, we may assume that $w \geq 3$.
We must first check that the quotient scheme is trivial. To see
this, it suffices to show that, for any three distinct fibres
$U$, $V$ and $W$, $\cI(U,V)=\cI(V,W)$. But this is clear since
the dismantled scheme on vertex set $Y=U \cup V \cup W$ is
still imprimitive and there is only one choice for its
quotient: the trivial scheme on three vertices. So $\cI(U,V) =
\cI(V,W)=\overline{\cI}$.

Next, we claim that $\cS:=\{R_i^{UV}| i \in \cI(U,V); U,V \in
\cF\}$ forms a coherent configuration on $X$.
In order to do this, we will have
to consider the intersection numbers of the dismantled schemes
$(Y,\cR^Y)$ where $Y$ is a union of fibres, which we denote
by $p_{ij}^h(Y)$.
To establish the
claim, we first observe that the non-empty relations $R_i^{UV}$
form a partition of $X \times X$ and that
$(R_i^{UV})^\top=R_i^{VU}$.

Now pick an arbitrary triple of
relations $R_i^{UV},R_j^{VW},R_h^{UW},$ with $i\in \cI(U,V)
,j\in \cI(V,W),h\in \cI(U,W)$. We have to show that the number
$$
\lm_{ijh}^{UVW}(u,x):= |R_i^{UV}(u)\cap (R_j^{VW})^{\top}(x)| = |R_i(u)\cap R_j(x)\cap V|
$$
does not depend on the pair $(u,x)\in R_h^{UW}$.

If $U=V$ then $i\in \cI$, implying $R_i(u)\cap V = R_i(u)$.
Therefore $ \lm_{ijh}^{UVW}(u,x) = |R_i(u)\cap
R_j(x)|=p_{ij}^h.$ Analogously, $ \lm_{ijh}^{UVW}(u,x)
=p_{ij}^h $ if $V=W$.

Next, we consider the case that $U\neq V$ and $U=W$. In this
case $h\in\cI$, while $i,j\in\ovr{\cI}$. Consider the scheme
$(Y,\cR^Y)$, where $Y=U\cup V$.  For $u,x \in U=W$, we have
\begin{equation}\label{pijhY}
p_{ij}^h(Y)=|R_i^{Y}(u)\cap (R_j^{Y})^\top(x)| = |R_i(u)\cap R_j(x)\cap Y|.
\end{equation}
Because $i,j\in\ovr{\cI}$, the intersections $R_i(u)\cap U$ and
$R_j(x)\cap U$ are empty. Therefore $R_i(u)\cap R_j(x)\cap Y =
R_i(u)\cap R_j(x)\cap V$, and hence $\lm_{ijh}^{UVW}(u,x) =
p_{ij}^h(U\cup V).$

The last case is the one in which the fibres $U,V,W$ are
pairwise distinct. Then $i,j,h\in\ovr{\cI}$. Consider the
scheme $(Y,\cR^Y)$, where $Y=U\cup V\cup W$. As before, we have
\eqref{pijhY} for $u \in U, x \in W$. Because
$i,j\in\ovr{\cI}$, we obtain $R_i(u)\cap Y \subseteq V\cup W$
and $R_j(x)\cap Y\subseteq U \cup V$. This implies that
$R_i(u)\cap R_j(x)\cap Y = R_i(u)\cap R_j(x)\cap V$, and
therefore $\lm_{ijh}^{UVW}(u,x) = p_{ij}^h(U\cup V\cup W)$.

Thus we proved that the relations in $\cS$ form a coherent
configuration, with intersection numbers
\begin{equation}\label{str_const}
p_{R^{UV}_i R^{VW}_j}^{R^{UW}_h}=\lm_{ijh}^{UVW}=
\begin{cases}
p_{ij}^h & \mbox{ if }U=V \mbox{ or } V=W;\\
p_{ij}^h(U\cup V) & \mbox{ if }U\neq V, U=W;\\
p_{ij}^h(U\cup V\cup W) & \mbox{ if }U\neq V,V\neq W,W\neq U.
\end{cases}
\end{equation}

Finally, we shall show that the coherent configuration is
uniform. By the above one-to-one correspondence between uniform
association schemes and uniform coherent configurations this
proves the theorem. To show that the configuration is uniform,
we have to prove that
$\displaystyle{\lm_{ijh}^{UVW} } = \displaystyle{\lm_{ijh}^{U'V'W'}}$
whenever the triples $(U,V,W)$ and $(U',V',W')$ have the same type.

For $U=V$ (and, therefore, $U'=V'$), or if $V=W$, this is clear.
If $U\neq V, U=W$ and $U'\neq V',U'=W'$, then
$i,j\in\ovr{\cI},h\in\cI$. In this case
we have to show that $p_{ij}^h(U\cup V) = p_{ij}^h(U'\cup V')$. To prove this,
it is sufficient to show that $p_{ij}^h(U\cup V) =
p_{ij}^h(V\cup W)$ holds for any triple $(U,V,W)$ of pairwise
distinct fibres. So, consider a scheme $(Y,\cR^Y)$, where $Y=U\cup
V\cup W$. Because $h\in\cI$, $R_h^{Y} = R_h^{U}\cup
R_h^{V}\cup R_h^{W}$. Pick an arbitrary pair $(u,u')\in R_h^{U}$,
that is, $(u,u')\in R_h$ and $u,u'\in U$. Because $i,j\in\ovr{\cI}$, we have that
$$
p_{ij}^h(Y)=|R_i(u)\cap R_j(u')\cap Y| = |R_i(u)\cap R_j(u')\cap V| + |R_i(u)\cap R_j(u')\cap W|=
$$
$$
|R_i(u)\cap R_j(u')\cap (U\cup V)| + |R_i(u)\cap R_j(u')\cap(U\cup W)| =
p_{ij}^h(U\cup V)+p_{ij}^h(U\cup W).
$$
The same argument with $(x,x')\in R_h^{W}$ shows that $p_{ij}^h(W\cup V)+p_{ij}^h(W\cup U) = p_{ij}^h(Y)$, and hence
$p_{ij}^h(U\cup V) = p_{ij}^h(V\cup W)$.

Consider now the remaining case where the triples $(U,V,W)$ and
$(U',V',W')$ consist of pairwise distinct fibres. In this case
$i,j,h\in\ovr{\cI}$ and we have to show that $p_{ij}^h(U\cup V
\cup W) = p_{ij}^h(U'\cup V' \cup W')$. If $w = 3$, then there
is nothing to prove, so we may assume that $w\geq 4$. In this
case it is sufficient to show that $p_{ij}^h(U\cup V \cup W) =
p_{ij}^h(V\cup W \cup Z)$ holds for each quadruple $U,V,W,Z$ of
pairwise distinct fibres. The arguments for this are similar as
in the previous case. Consider the scheme $(Y,\cR^Y)$, where
$Y=U\cup V\cup W\cup Z$. Then it follows from considering pairs
$(u,y)\in R_h^{UV}$ and $(y,z)\in R_h^{VZ}$ that
$$p_{ij}^h(U\cup V \cup W)+ p_{ij}^h(U\cup V \cup Z) =p_{ij}^h(Y)=
p_{ij}^h(V\cup Z \cup W)+p_{ij}^h(V\cup Z \cup U),$$ which
finishes the proof.
\end{proof}

\noindent As an immediate consequence,
we obtain important structural information about dismantled schemes.

\begin{corollary}
\label{Cdismantledallsame}
Let $(X,\cR)$ be a dismantlable association scheme with $w$ fibres.
If $2\le w' \le w$ and each of $Y,Y' \subseteq X$ are expressible as
a union of $w'$ fibres, then the dismantled schemes $(Y,\cR^Y)$ and
$(Y',\cR^{Y'})$ have the same parameters (i.e., same eigenmatrices $P$
and $Q$ and same intersection numbers and Krein parameters, with appropriate
orderings of their relations and idempotents).
\end{corollary}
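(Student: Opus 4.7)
The plan is to read off the intersection numbers of each dismantled scheme directly from the structure constants of the uniform coherent configuration produced in Theorem \ref{dismunif}, and to observe that they depend only on the number $w'$ of fibres in $Y$.

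By Theorem \ref{dismunif}, the scheme $(X,\cR)$ is uniform, and Proposition \ref{one-one} together with equation \eqref{str_const} tells us that the structure constants $\lambda_{ijh}^{UVW}$ of the associated coherent configuration depend only on the ``type'' of the triple $(U,V,W)$. Explicitly, $\lambda_{ijh}^{UVW} = p_{ij}^h$ when $U=V$ or $V=W$; a single quantity $\beta_{ij}^h := p_{ij}^h(U\cup V)$ (independent of the particular distinct pair $U\neq V$) when $U=W\ne V$; and a single quantity $\alpha_{ij}^h := p_{ij}^h(U\cup V\cup W)$ (independent of the particular pairwise distinct triple) otherwise.

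Next I fix $Y = U_1 \cup \cdots \cup U_{w'}$ and choose $(u,x) \in R_h^Y$ with $u \in U_a$, $x \in U_b$. Partitioning $Y$ by its fibres gives
\begin{equation*}
p_{ij}^h(Y) \;=\; |R_i(u) \cap R_j(x) \cap Y| \;=\; \sum_{c=1}^{w'} \lambda_{ijh}^{U_aU_cU_b}(u,x).
\end{equation*}
When $a=b$ (so $h\in\cI$), the $c=a$ term contributes $p_{ij}^h$ and each of the remaining $w'-1$ terms contributes $\beta_{ij}^h$; when $a\ne b$ (so $h\in\overline{\cI}$), the $c=a$ and $c=b$ terms each contribute $p_{ij}^h$ while the remaining $w'-2$ terms each contribute $\alpha_{ij}^h$. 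In either case the result depends only on $i,j,h$ and $w'$, so $p_{ij}^h(Y)=p_{ij}^h(Y')$.

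Once the intersection numbers coincide, the linear correspondence $A_i^Y \mapsto A_i^{Y'}$ extends to an isomorphism of Bose-Mesner algebras that preserves both ordinary and Schur-Hadamard products (and transposition). Under such an isomorphism primitive idempotents map to primitive idempotents, so after matching these up compatibly the eigenmatrices $P$, $Q$ and the Krein parameters of the two dismantled schemes also agree. No step looks delicate here; the only real care required is the case-analysis bookkeeping dictated by \eqref{str_const}.
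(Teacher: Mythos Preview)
Your proof is correct and follows essentially the same route as the paper's. The paper's own argument is a one-liner: since the scheme is uniform (by Theorem \ref{dismunif}), the intersection numbers of any dismantled scheme are determined by the universal parameters $a_{ij}^h$ of Definition \ref{Dunifscheme} together with $w'$, via the formulas recorded immediately after that definition. You do the same thing but with the coherent-configuration structure constants $\lambda_{ijh}^{UVW}$ from \eqref{str_const} in place of the $a_{ij}^h$; these are the same numbers under a different name, so the arguments are equivalent. One cosmetic remark: in your case analysis the phrases ``each contributes $p_{ij}^h$'' or ``each contributes $\beta_{ij}^h$'' tacitly assume the index constraints $i\in\cI(U_a,U_c)$, $j\in\cI(U_c,U_b)$ are met (otherwise the term is zero); this does not affect your conclusion since the vanishing is again governed only by $i,j,h$, but it would be cleaner to say so explicitly.
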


\begin{proof}
It follows from Definition \ref{Dunifscheme} that the
parameters of the dismantled scheme $(Y,\cR^Y)$  depend only on
$w'$ and the parameters $a_{ij}^h$ and not on the choice of $Y$
itself.
\end{proof}

\subsection{Q-Higman schemes}
\label{Subsec:QHigman}

In the previous section, we have seen that uniformity of a scheme is
equivalent to dismantlability. In this section, we give a
characterization of uniform schemes in terms of the Krein
parameters (through so-called Q-Higman schemes) and study
the idempotents of uniform schemes.

\subsubsection{Krein parameters of Q-Higman schemes}\label{uniformschemes}

With cometric Q-antipodal association schemes in mind, we
consider an imprimitive association scheme with $\cJ=\{0,d\}$,
$\cJ_j=\{j,d-j\}$ for $j=0,1,\dots,\ell-1<\frac{d}{2}$, and
$\cJ_j=\{j\}$ for $j=\ell,\dots,d-\ell$ (for some $\ell$).

For such a scheme we consider the dual intersection matrix
$L^*_d$ with entries $(L^*_d)_{ij}:=q^i_{dj}$. First note that
$\rho_j=1+q^j_{dj}$. If $j < \ell$ or $j
> d-\ell$, then from $E_j \circ (E_0+E_d) = \frac{1}{n} \pi(E_j)
=\frac{1}{v}(1+q^j_{dj})(E_j+E_{d-j})$, we find that $E_j \circ E_d
= \frac{1}{v}(q^j_{dj}E_j + (1+q^j_{dj})E_{d-j})$, and hence that
$q^{d-j}_{dj}=1+q^j_{dj}$, and $q^i_{dj}=0$ for $i \neq j,d-j$.

For $\ell \leq j \leq d-\ell$, we find from $E_j \circ
(E_0+E_d) = \frac{1}{v}(1+q^j_{dj})E_j$ that $q^i_{dj}=0$ for
$i \neq j$, and hence $q^j_{dj}=w-1$. In other words, the only
nonzero entries of $L^*_d$ are on the diagonal and the
antidiagonal.

For $j < \ell$ or $j > d-\ell$, we may combine the facts $q^{d-j}_{dj}=1+q^j_{dj}$ and
$q^{j}_{dj}+q^{j}_{d,d-j}=m_d=w-1$ to find $(1+q^j_{dj})m_{d-j}=(w-1-q^j_{dj})m_j$. This implies that $m_{d-j} \leq
(w-1)m_j$ with equality if and only if $q^j_{dj}=0$. We thus obtain the following:

\begin{lemma}
\label{cosetssize2krein}
Consider an imprimitive association scheme with
$\cJ=\{0,d\}$, $\cJ_j=\{j,d-j\}$ for
$j=0,1,\dots,\ell-1<\frac{d}{2}$, and $\cJ_j=\{j\}$ for
$j=\ell,\dots,d-\ell$. If $\ell \leq j \leq d-\ell$, then
$q^i_{dj}=0$ for $i \neq j$ and $\rho_j =q^j_{dj}+1=w$. If $j <
\ell$ or $j > d-\ell$, then $\rho_j=q^{d-j}_{dj}=1+q^j_{dj}$
and $q^i_{dj}=0$ for $i \neq j,d-j$, and moreover, $m_{d-j}
\leq (w-1)m_j$ with equality if and only if $q^j_{dj}=0$.
\end{lemma}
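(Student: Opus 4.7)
The plan is to exploit the projection identity from Lemma \ref{idempotents}, which in the present setting reads $\pi(E_j) = \frac{\rho_j}{w} F_{j'}$, where $j'$ labels the equivalence class of $j$ under $\sim^*$. Since $\cJ = \{0,d\}$ we have $n(E_0 + E_d) = I_w \otimes J_n$, so by the definition of $\pi$ this identity becomes
\[
n\, E_j \circ (E_0 + E_d) \;=\; \frac{\rho_j}{w} F_{j'}.
\]
Using $E_0 = J/v$ gives $E_j \circ E_0 = \frac{1}{v} E_j$, so I can solve for $E_j \circ E_d$ and, comparing with $\frac{1}{v} \sum_i q^i_{dj} E_i$, read off the nonzero $q^i_{dj}$ directly.

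First I would handle the case $\ell \le j \le d-\ell$, in which $\cJ_{j'} = \{j\}$ and hence $F_{j'} = E_j$. Coefficient comparison then forces $q^i_{dj} = 0$ for $i \neq j$ and $q^j_{dj} = \rho_j - 1$. To pin down the value $\rho_j = w$, I would invoke the trace identity $m_d = w-1$ (which drops out of $\mathrm{tr}(n(E_0+E_d)) = v$) together with the standard Krein identity $\sum_i q^j_{di} = m_d$, obtained by Schur-multiplying $\sum_i E_i = I$ by $E_d$ and comparing coefficients. Since all $q^j_{di}$ with $i \neq j$ vanish in this case, this forces $q^j_{dj} = w-1$, hence $\rho_j = w$.

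Next, for $j < \ell$ or $j > d-\ell$, the equivalence class is $\{j,d-j\}$, so $F_{j'} = E_j + E_{d-j}$. The same coefficient-matching now yields $q^i_{dj} = 0$ for $i \notin \{j, d-j\}$, $q^j_{dj} = \rho_j - 1$, and $q^{d-j}_{dj} = \rho_j = 1 + q^j_{dj}$. For the multiplicity inequality, I would again invoke $\sum_i q^j_{di} = m_d = w-1$, which by the vanishing just proved reduces to $q^j_{dj} + q^j_{d,d-j} = w-1$. Combining this with the standard Krein symmetry $m_j\, q^j_{d,d-j} = m_{d-j}\, q^{d-j}_{dj} = m_{d-j}(1 + q^j_{dj})$ gives
\[
(w-1 - q^j_{dj})\, m_j \;=\; (1 + q^j_{dj})\, m_{d-j},
\]
whence $m_{d-j} \le (w-1) m_j$ with equality iff $q^j_{dj} = 0$.

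The only real obstacle is bookkeeping: one must make sure the equivalence class $\cJ_{j'}$ containing $j$ is identified correctly in each case, and that the two standard Krein identities invoked at the end ($\sum_i q^j_{di} = m_d$ and $m_a q^a_{bc} = m_b q^b_{ac}$) are applied with the right indices. Beyond this, every step is a direct comparison of coefficients of primitive idempotents in an identity already handed to us by Lemma~\ref{idempotents}.
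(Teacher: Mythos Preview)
Your approach is correct and essentially identical to the paper's: both expand $n\,E_j\circ(E_0+E_d)=\pi(E_j)=\frac{\rho_j}{w}F_{j'}$, compare coefficients of the $E_i$ to read off the $q^i_{dj}$, and then combine $\sum_i q^j_{di}=m_d=w-1$ with the symmetry $m_j q^j_{d,d-j}=m_{d-j}q^{d-j}_{dj}$ to obtain the multiplicity inequality. One small expository point: when you pass from ``$q^i_{dj}=0$ for $i\neq j$'' to ``$q^j_{di}=0$ for $i\neq j$'' in the middle case, you are silently using the identity $m_a q^a_{bc}=m_b q^b_{ac}$ that you only mention at the end; the paper is equally terse here, but it is worth making that step explicit.
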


\noindent The case of equality is one of the motivations for
the following definition.

\begin{definition}
\label{def:Qanti}
An imprimitive association scheme is called Q-Higman
if for some $\ell$ such that $1 \leq \ell < \frac{d}{2}+1$ and for
some ordering of the primitive idempotents, we have that
$\cJ=\{0,d\}$, $\cJ_j=\{j,d-j\}$ for $j=0,1,\dots,\ell-1$,
$\cJ_j=\{j\}$ for $j=\ell,\dots,d-\ell$, and $q^d_{jj}=0$ (or
equivalently $m_{d-j}=(w-1)m_j$) for $j=0,1,\dots,\ell-1$.
\end{definition}

\noindent It is important to note that this Q-Higman property
is formulated entirely in terms of
the Krein parameters, in particular in terms of the dual
intersection matrix $L^*_d$.

\begin{proposition}\label{Qantikrein} An association scheme is Q-Higman if
and only if for some $\ell$ such that $1 \leq \ell < \frac{d}{2}+1$,
for some $w$, and some ordering of the idempotents it holds that
$q^{j}_{d,d-j}=w-1$ for $j < \ell$, $q^{j}_{d,d-j}=1$ and
$q^{j}_{dj}=w-2$ for $j > d-\ell$, $q^{j}_{dj}=w-1$ for $\ell \leq j
\leq d-\ell$, and $q^i_{dj}=0$ for all other values of $i$ and $j$.
Moreover, if this is the case, then $\rho_j=1$ if $j < \ell$,
$\rho_{j}=w$ if $\ell \leq j \leq d-\ell$, and $\rho_j=w-1$ if $j >
d-\ell$.
\end{proposition}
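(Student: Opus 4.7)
The plan is to use Lemma \ref{cosetssize2krein} as the main engine, together with two standard facts: the symmetry identity $m_h q^h_{ij} = m_i q^i_{hj}$ for Krein parameters (cf.\ \cite[Lem.~2.3.1]{bcn}), and the identity $q^0_{ij} = m_i \delta_{ij}$ read off from $E_0\circ E_j = v^{-1}E_j$. These together give in particular $\rho_j = q^j_{j0}+q^j_{jd} = 1+q^j_{dj}$, so all of the $\rho_j$ formulas drop out automatically once the $q^j_{dj}$ are known.

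For the forward direction, suppose the scheme is Q-Higman. For $\ell\le j\le d-\ell$, Lemma \ref{cosetssize2krein} immediately yields $q^j_{dj}=w-1$ and $q^i_{dj}=0$ for $i\neq j$. For $j<\ell$, the hypothesis $q^d_{jj}=0$ translates, via the symmetry identity, into $q^j_{dj}=0$, and Lemma \ref{cosetssize2krein} then yields $q^{d-j}_{dj}=1$; applying the symmetry identity once more and using $m_{d-j}=(w-1)m_j$ (the second form of the Q-Higman condition) gives $q^j_{d,d-j}=w-1$. The range $j>d-\ell$ is handled dually: writing $j=d-j'$ with $j'<\ell$ recycles the preceding computation and yields $q^j_{d,d-j}=1$ and $q^j_{dj}=w-2$.

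For the reverse direction, I would first check that $\langle E_0,E_d\rangle$ is a $\circ$-subalgebra by computing $E_d\circ E_d = v^{-1}\bigl((w-1)E_0+(w-2)E_d\bigr)$ from the given values of $q^0_{dd}$ and $q^d_{dd}$; by the equivalent conditions in Section \ref{Subsec:imprim} this forces imprimitivity with $\cJ=\{0,d\}$, and the fibre count is $1+m_d=w$ since $q^0_{dd}=m_d$. Next, I would determine the $\sim^*$-classes by combining $q^0_{ij}=m_i\delta_{ij}$ with the prescribed support of $q^i_{dj}$: for $j<\ell$ or $j>d-\ell$, the only $i\neq j$ with $i\sim^* j$ is $d-j$, giving $\cJ_j=\{j,d-j\}$; for $\ell\le j\le d-\ell$, no such $i$ exists, giving $\cJ_j=\{j\}$. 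Finally, the remaining Q-Higman equation $q^d_{jj}=0$ for $j<\ell$ follows from the given $q^j_{dj}=0$ via symmetry.

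The main obstacle is the asymmetry between the ranges $j<\ell$ and $j>d-\ell$: although these are paired by the involution $j\leftrightarrow d-j$, the multiplicities $m_j$ and $m_{d-j}$ differ by a factor $w-1$, so the Krein parameters at paired indices do \emph{not} match, and each translation through the symmetry identity rescales by $w-1$ or its reciprocal. Keeping these rescalings consistent across the cases is where the care must go; everything else is a direct invocation of Lemma \ref{cosetssize2krein} together with the formula $\rho_j=1+q^j_{dj}$.
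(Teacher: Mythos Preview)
Your proposal is correct and follows essentially the same route as the paper: the forward direction rests on Lemma~\ref{cosetssize2krein} and the discussion preceding it (you make explicit the use of the symmetry identity $m_h q^h_{ij}=m_i q^i_{hj}$ where the paper simply refers back to its computation of $E_j\circ(E_0+E_d)$), and the reverse direction---checking that $\langle E_0,E_d\rangle$ is a $\circ$-subalgebra, reading off the $\sim^*$-classes, and deducing $q^d_{jj}=0$---matches the paper's argument exactly. The bookkeeping you flag as the ``main obstacle'' (the $w-1$ rescalings between the ranges $j<\ell$ and $j>d-\ell$) is handled correctly.
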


\begin{proof} If the scheme is Q-Higman, then the stated
properties follow from the above considerations. On the other
hand, suppose that these properties hold. Then it follows that
$v(E_0+E_d) \circ (E_0+E_d)=w(E_0+E_d)$ and that $\langle
E_0,E_d \rangle$ is a $\circ$-subalgebra. This means that the
scheme is imprimitive with $\cJ=\{0,d\}$ and fibres of size
$\frac{v}{w}$. The equivalence classes of $\sim^*$ then easily
follow, and so does the conclusion that the scheme is
Q-Higman.
\end{proof}

\noindent We note that the standard relations between the Krein
parameters of a scheme (e.g., see \cite[Lemma 2.3.1]{bcn}) give some
more specific information on those of Q-Higman schemes. It can
for example be derived (from \cite[Lemma 2.3.1]{bcn} or directly by
working out the product $E_i \circ E_j \circ E_d$ in different ways)
that if $j<\ell$ and $i$ is arbitrary, then
$q^h_{i,d-j}=(w-1)q^{d-h}_{ij}$ for $h < \ell$,
$q^h_{i,d-j}=(w-1)q^{h}_{ij}$ for $\ell \leq h \leq d-\ell$, and
$q^h_{i,d-j}=q^{d-h}_{ij}+(w-2)q^h_{ij}$ for $h > d-\ell$. It also
follows that $q^h_{ij}=q^{d-h}_{ij}$ for all $i$, $\ell \leq j\leq
d-\ell$ and $h < \ell$. In the cometric Q-antipodal case, we include
these observations in Lemma \ref{Lqijk2} below.

\subsubsection{The idempotents of uniform schemes}

In this section we shall show one of our main results, i.e., that
Q-Higman schemes and uniform schemes are the same. For this we
will again use the correspondence to uniform coherent
configurations.

We remind the reader that $\cA=\langle A_i\,|\,i
=0,\dots,d\rangle$ is the Bose-Mesner algebra of the association
scheme under consideration, and that $\cB =\langle A_i\,|\,i\in
\cI\rangle$ is the Bose-Mesner subalgebra on the fibres.
Moreover, we let
$$\cD:=\langle A_i\,|\,i\not\in \cI\rangle.$$
In order to show that a uniform scheme is Q-Higman, and to find
relations with its dismantled schemes, we study its idempotents. We
start off with the case of bipartite schemes, i.e., imprimitive
schemes with two fibres.

\begin{lemma}\label{bipartite} A bipartite scheme is
Q-Higman. Each primitive idempotent of $\cB$ that is not a
primitive idempotent of $\cA$ is of the form $E+E'$, where $E$
and $E'$ are primitive idempotents of $\cA$, and $E-E' \in
\cD$.
\end{lemma}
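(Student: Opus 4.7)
Since $w = 2$, the quotient scheme has at most one class, so $|\cJ| = 2$ and after reordering we may write $\cJ = \{0,d\}$. The imprimitivity identity $E_0 + E_d = \frac{1}{n}(I_2 \otimes J_n)$ together with $E_0 = \frac{1}{v}J$ then gives $vE_d = \epsilon\epsilon^\top$, where $\epsilon \in \{\pm 1\}^X$ takes the value $+1$ on one fibre and $-1$ on the other; in particular $m_d = 1 = w-1$. The key construction is the linear map $\sigma$ on $M_X(\C)$ defined by $\sigma(M) := D_\epsilon M D_\epsilon$, where $D_\epsilon := \mathrm{diag}(\epsilon)$. Since $D_\epsilon^2 = I$, $\sigma$ is an involutive algebra automorphism of $M_X(\C)$, and a direct entrywise check shows $\sigma(M) = vE_d \circ M$. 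Because $A_i$ is block-diagonal in the fibre partition when $i \in \cI$ and has only off-diagonal blocks when $i \in \overline{\cI}$, we get $\sigma(A_i) = A_i$ for $i \in \cI$ and $\sigma(A_i) = -A_i$ for $i \in \overline{\cI}$. Hence $\sigma$ restricts to an involutive algebra automorphism of $\cA$, and the decomposition $\cA = \cB \oplus \cD$ is exactly the splitting into the $\pm 1$-eigenspaces of $\sigma|_{\cA}$.

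As an algebra automorphism, $\sigma$ permutes the primitive idempotents; set $\sigma(E_j) =: E_{j^\ast}$. Because the diagonal entries of $vE_d$ are all $+1$, $\sigma$ preserves trace, and so $m_j = m_{j^\ast}$. Comparing $E_{j^\ast} = \sigma(E_j) = vE_d \circ E_j = \sum_h q^j_{dh}E_h$ forces $q^j_{dh} = \delta_{h,j^\ast}$. If $j^\ast = j$ this yields $q^j_{dj} = 1$, so $\rho_j = 2 = w$ and $\cJ_j = \{j\}$; if $j^\ast \neq j$ it yields $q^j_{dj} = 0$, so $\rho_j = 1$, $\cJ_j = \{j, j^\ast\}$, and $m_{j^\ast} = m_j = (w-1)m_j$. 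A direct computation gives $\sigma(E_0) = \frac{1}{v}\epsilon\epsilon^\top = E_d$, so $\{0,d\}$ is one of the nontrivial $\sigma$-orbits. Relabelling the $\ell$ nontrivial orbits (including $\{0,d\}$) as $\{k, d-k\}$ for $k = 0,\ldots,\ell-1$ and the fixed points as $\ell, \ldots, d-\ell$ yields an ordering of the primitive idempotents satisfying Definition \ref{def:Qanti}, so the scheme is Q-Higman.

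For the second statement, by Lemma \ref{idempotents} the primitive idempotents of $\cB$ are $F_k = \sum_{j' \in \cJ_k} E_{j'}$. Those that are not primitive in $\cA$ correspond to the nontrivial $\sigma$-orbits and have the form $F_k = E_k + E_{d-k}$ with $0 \le k < \ell$. Setting $E := E_k$ and $E' := E_{d-k}$, both are distinct primitive idempotents of $\cA$, and $\sigma(E - E') = E_{d-k} - E_k = -(E - E')$, so $E - E'$ lies in the $(-1)$-eigenspace of $\sigma|_{\cA}$, which is precisely $\cD$ as shown above.

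The only real obstacle is the very first step: recognising that Schur multiplication by $vE_d$ coincides with conjugation by the diagonal sign matrix $D_\epsilon$. This identification simultaneously witnesses two structures, namely an algebra automorphism of $\cA$ whose action on primitive idempotents provides the Q-Higman pairing, and an involution whose $\pm 1$-eigenspaces are $\cB$ and $\cD$. Once $\sigma$ is available the remaining steps are routine manipulations of the Krein decomposition.
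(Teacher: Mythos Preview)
Your proof is correct and rests on the same core idea as the paper's: the involution that negates the off-diagonal blocks (the paper writes this as $E\mapsto E'=\left[\begin{smallmatrix}E_{UU}&-E_{UV}\\-E_{VU}&E_{VV}\end{smallmatrix}\right]$, you write it as $\sigma(M)=D_\epsilon M D_\epsilon$). Your formalisation adds value by identifying this involution with Schur multiplication by $vE_d$, which gives immediate access to the Krein parameters and makes the verification of Definition~\ref{def:Qanti} explicit, whereas the paper simply reads off ``all $\cJ_j$ have size at most two'' and ``traces agree'' from the block picture.

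One small slip: in the line ``$vE_d\circ E_j=\sum_h q^j_{dh}E_h$'' the indices are transposed; the Krein expansion is $vE_d\circ E_j=\sum_h q^{h}_{dj}E_h$, so the conclusion should read $q^{h}_{dj}=\delta_{h,j^\ast}$. Since $m_d q^{d}_{jj}=m_j q^{j}_{dj}$, this still gives $q^{d}_{jj}=0$ for the nontrivial orbits, and nothing in the argument is affected.
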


\begin{proof}
Consider a bipartite scheme with fibres $U$ and $V$. Because all relations $R_i, i \notin \cI$ are bipartite, it
follows that $E=\mtrx{E_{UU}}{E_{UV}}{E_{VU}}{E_{VV}}$ is a primitive idempotent if and only if
$E'=\mtrx{E_{UU}}{-E_{UV}}{-E_{VU}}{E_{VV}}$ is a primitive idempotent. Moreover, if $E$ and $E'$ are indeed primitive
idempotents of $\cA$ and $E_{UV} \neq 0$, or equivalently,
$E \notin \cB$, then $E+E'$ is a primitive idempotent of the Bose-Mesner subalgebra $\cB$, and $E-E'\in \cD$. This
implies that the primitive idempotents of $\cB$ that are not primitive idempotents of $\cA$ are of the form $E+E'$,
where $E$ and $E'$ are primitive idempotents of $\cA$, and $E-E'\in \cD$. Thus, all sets $\cJ_j$ have size at most two.
Moreover, the multiplicities of the idempotents $E$ and $E'$ are equal, because $\text{trace}(E)=\text{trace}(E')$.
Thus, the scheme is Q-Higman.
\end{proof}

\begin{lemma}\label{p:matrixD} Consider a uniform association scheme.
Let $F\in\cB$ be a primitive idempotent of $\cB$. Then $F$ is a
primitive idempotent of $\cA$ if and only if $F\cD = \{0\}$.
Let $Y$ be a union of at least two fibres. Then $F^Y$ is a
primitive idempotent of $\cB^Y$. Moreover, $F^Y$ is a primitive
idempotent of $\cA^Y$ if and only if $F$ is a primitive
idempotent of $\cA$.
\end{lemma}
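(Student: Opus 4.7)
The plan is to exploit the vector-space decomposition $\cA = \cB \oplus \cD$, noting that elements of $\cB$ are supported on the fibre-diagonal blocks while elements of $\cD$ are supported entirely off those blocks. Since $F$ is a primitive idempotent of $\cB$, we have $F\cB = \C F$, and so $F\cA = \C F + F\cD$; hence $F$ is primitive in $\cA$ iff $F\cD \subseteq \C F$. As $F$ is nonzero on fibre-diagonal blocks and, for any $D \in \cD$, the block computation $FD = \sum_{U \neq V'} F^U D^{UV'}$ shows $FD$ is supported off those blocks, we get $\C F \cap F\cD = \{0\}$. Therefore $F\cD \subseteq \C F$ forces $F\cD = \{0\}$, establishing the first equivalence.

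For the claim that $F^Y$ is a primitive idempotent of $\cB^Y$, I would use that $I^Y$ is block-diagonal (since $Y$ is a union of fibres), hence commutes with every element of $\cB$. Thus $F^Y = FI^Y = I^YF$, and for any $B \in \cB$ with $FB = \lambda_B F$, one computes $F^YB^Y = FI^Y \cdot I^YBI^Y = FBI^Y = \lambda_B F^Y$. Since $F^Y$ is a nonzero idempotent of $\cB^Y$, this gives primitivity.

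For the final equivalence, Theorem \ref{uniformdismantle} ensures that $(Y,\cR^Y)$ is itself uniform, so the first part of the lemma, applied to this dismantled scheme, yields: $F^Y$ is primitive in $\cA^Y$ iff $F^Y\cD^Y = \{0\}$, where $\cD^Y := \langle A_i^Y \mid i \notin \cI\rangle$. The direction $F\cD = \{0\} \Rightarrow F^Y\cD^Y = \{0\}$ is routine: $FA_i = 0$ forces each block $F^U A_i^{UV'} = 0$, so $F^Y A_i^Y = 0$ upon restriction.

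The main obstacle is the reverse implication, in which one must propagate vanishing from fibre pairs inside $Y$ to arbitrary fibre pairs. Here uniformity is essential, via the associated uniform coherent configuration $\cS$. Writing $F^U = \sum_{k \in \cI} f_k A_k^U$ with coefficients $f_k$ independent of $U$, the product $F^U A_i^{UV'}$ expands as $\sum_{j \notin \cI} \gamma_{ij}A_j^{UV'}$ where, by (\ref{coco_uniform}), the coefficients $\gamma_{ij}$ depend only on $(i,j)$, since all triples $(U,U,V')$ with $U \neq V'$ share a common type. Since $Y$ contains at least two fibres, we may pick a pair $(U,V')$ of distinct fibres in $Y$; the assumption $F^Y A_i^Y = 0$ then forces $F^U A_i^{UV'} = 0$ at that pair (its summands having pairwise disjoint supports), whence $\gamma_{ij}=0$ for all $j$, and therefore $F^{U''} A_i^{U''V'''}=0$ for every pair of distinct fibres. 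Summing gives $FA_i = 0$ for every $i \notin \cI$, completing the argument.
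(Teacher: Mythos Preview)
Your proof is correct. The first two parts match the paper's argument almost verbatim: both exploit the block-diagonal structure of $\cB$ to see that $FD$ is supported off the fibre-diagonal blocks, so proportionality to $F$ forces $FD=0$.

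For the final equivalence the paper takes a shorter route than you do. Instead of expanding $F^U A_i^{UV'}$ through the uniform structure constants and invoking type-invariance from (\ref{coco_uniform}) to transport vanishing from one fibre pair to all others, the paper simply notes that $(F\cD)^Y = F^Y\cD^Y$ (again by block-diagonality of $F$) and that the linear map $A\mapsto A^Y$ is a \emph{bijection} $\cA\to\cA^Y$: dismantlability guarantees $(Y,\cR^Y)$ is a $d$-class scheme, so $\dim\cA^Y = d+1 = \dim\cA$, and the evidently surjective restriction map is therefore injective. This gives $F\cD=\{0\}\iff F^Y\cD^Y=\{0\}$ in one line. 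Your argument is essentially a hands-on proof of the injectivity of $D\mapsto D^Y$ on $\cD$ via the constants $a_{ki}^j$; it works, but the dimension count is cleaner. The upside of your version is that it makes the dependence on uniformity visible at the level of individual blocks rather than hiding it inside a dimension equality.
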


\begin{proof}
An idempotent $F$ of $\cA$ is primitive if and only if $FA$ is proportional to
$F$ for each $A\in\cA$. Because $F$ is a primitive idempotent of $\cB$, $FA$ is
proportional to $F$ for each $A\in\cB$. Therefore $F$ is a primitive idempotent
of $\cA$ if and only if $FA$ is proportional to $F$ for each $A\in \cD$. So
consider $A\in \cD$. Because $F$ is block-diagonal and $A^{U}=0$ for $U\in\cF$,
we obtain $(FA)^{U}=0$. Therefore $FA$ is proportional to $F$ if and only if
$FA=0$.

Because of the block-diagonal structure of $\cB$, $F^Y$ is
clearly a primitive idempotent of $\cB^Y$, and $(F\cD)^Y =
F^Y\cD^Y$. Because the linear map $A\mapsto A^Y$ is a bijection
between $\cA$ and $\cA^Y$, it follows that $F\cD= \{0\}$ if and
only if $F^Y\cD^Y= \{0\},$ hereby proving the final statement
of the lemma.
\end{proof}

\begin{theorem}\label{uniformidempotents}
Consider a uniform association scheme. Let $F_0,\dots,F_e\in\cB$
be a complete set of primitive idempotents of $\cB$, ordered
such that $F_0,\dots,F_{\ell-1}$ are not primitive in $\cA$, and
$F_{\ell},\dots,F_e$ are primitive in $\cA$. Then for each
$j=0,\dots,\ell-1$ there exists a matrix $D_j\in\cD$ such that
for each union $Y$ of $w' \geq 2$ fibres, the matrices
$\frac{1}{w'}(F_j^Y+ D_j^Y)$ and $F_j^Y-\frac{1}{w'}(F_j^Y+
D_j^Y), j=0,\dots,\ell-1$, and $F_{\ell}^Y,\dots,F_e^Y$ are the
primitive idempotents of $\cA^Y$.
\end{theorem}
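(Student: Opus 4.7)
The plan is to exhibit $D_j$ explicitly as $D_j:=wE-F_j$, where $E$ is the (unique) primitive idempotent of $\cA$ lying in $\cJ_j$ that satisfies $E|_U=\tfrac{1}{w}F_j|_U$ for every fibre $U$. Once such an $E$ is in hand, $D_j\in\cD$ is automatic since its diagonal blocks vanish, and we have $F_j^Y+D_j^Y=wE^Y$ for every $Y$, so the two candidate primitive idempotents of $\cA^Y$ become $(w/w')E^Y$ and $F_j^Y-(w/w')E^Y$.

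To find $E$, note that by Lemma~\ref{idempotents} every primitive idempotent $E_{j'}\in\cA$ with $j'\in\cJ_j$ satisfies $E_{j'}|_U=(\rho_{j'}/w)F_j|_U$, so the required $E$ is the one with $\rho_{j'}=1$. I would produce it by restricting to a bipartite dismantling $Y_0=U\cup V$, which is still uniform by Theorem~\ref{uniformdismantle}, and applying Lemma~\ref{bipartite} to split $F_j^{Y_0}$ into exactly two primitive idempotents of $\cA^{Y_0}$. Via the uniform coherent configuration correspondence (Proposition~\ref{one-one}) and the identification $\cA=\C[\cS]^{\sym(\cF_{\cS})}$, the bipartite splitting lifts to a primitive $E\in\cA$ with the desired property, and in passing shows that $|\cJ_j|=2$.

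Idempotency of $(w/w')E^Y$ then reduces, via $I^Y=\sum_{U\in\cF_Y}I^U$, to the identity
\[
E\,I^U\,E=\tfrac{1}{w}E \qquad\text{for every fibre }U.
\]
I would prove this by a symmetry argument: each $\sigma\in\sym(\cF_{\cS})$ is an algebraic automorphism of $\cS$ fixing $E$ (since $E\in\cA$) and satisfying $\sigma(I^U)=I^{\sigma(U)}$, so $\sigma(EI^UE)=EI^{\sigma(U)}E$; transitivity of the $\sym(\cF_{\cS})$-action on fibres gives $EI^UE=EI^{U'}E$ for all $U,U'$, and summing over $U\in\cF$ yields $EIE=E^2=E$, forcing each summand to be $\tfrac{1}{w}E$. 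Summing instead over $U\in\cF_Y$ then gives $EI^YE=(w'/w)E$, and hence $((w/w')E^Y)^2=(w/w')E^Y$. Primitivity of $(w/w')E^Y$ in $\cA^Y$ and the analogous properties for the complement $F_j^Y-(w/w')E^Y$ follow by applying the whole construction to the uniform dismantled scheme on $Y$ (which has the same parameters by Corollary~\ref{Cdismantledallsame}) and matching the result with the global $D_j$; the remaining primitive idempotents $F_\ell^Y,\dots,F_e^Y$ are handled directly by Lemma~\ref{p:matrixD}.

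The main obstacle is the existence and uniqueness of $E$: without assuming Q-Higman a priori, one must show $|\cJ_j|=2$ with one member having $\rho_{j'}=1$. This requires globalizing the bipartite-dismantling decomposition of Lemma~\ref{bipartite} into a single element of $\cA$, and this is where the algebraic-automorphism action of $\sym(\cF_{\cS})$ on the uniform coherent configuration from Proposition~\ref{one-one} is essential, since it forces the individual bipartite pieces to assemble into a $\sym(\cF_{\cS})$-invariant matrix.
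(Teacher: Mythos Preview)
Your proposal contains two genuine gaps, and closing either one essentially forces you back into the paper's argument.

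\textbf{The construction of $E$.} You set $D_j:=wE-F_j$ where $E$ is the primitive idempotent of $\cA$ in $\cJ_j$ with $\rho_E=1$. But the existence of such an $E$ --- indeed, even the fact that $|\cJ_j|=2$ --- is precisely Corollary~\ref{coruniformQ}, which the paper derives \emph{from} this theorem. You propose to lift the bipartite idempotent $\tfrac12(F_j^{Y_0}+D^{Y_0})$ on $Y_0=U\cup V$ back to $\cA$ via the linear bijection $\cA\to\cA^{Y_0}$, but that map is not a ring homomorphism for matrix multiplication: the intersection numbers $p_{ij}^h$ and $p_{ij}^h(Y_0)$ differ by factors of $w-1$ or $w-2$ when $i,j\notin\cI$. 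Concretely, the preimage $\tfrac12(F_j+D)$ is idempotent only when $w=2$. What the paper does instead is lift the \emph{difference} $D\in\cD$ (which is $\sym(\cF)$-invariant), transport the block identities~\eqref{matrixD} to all pairs by double transitivity, and then prove the additional relation $D^{UV}D^{VW}=\varepsilon D^{UW}$ from the \emph{uniqueness of $D$ up to sign}; one global sign flip makes $\varepsilon=1$, yielding $(D^Y)^2=(w'-1)F^Y+(w'-2)D^Y$ for every $Y$. This triple-product step is the core of the proof and is absent from your sketch.

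\textbf{The symmetry argument for $EI^UE=\tfrac1wE$.} You correctly compute $\sigma(EI^UE)=EI^{\sigma(U)}E$, but the conclusion $EI^UE=EI^{U'}E$ does not follow: $\sigma$ permutes the family $\{EI^UE\}_{U\in\cF}$, it does not fix its members, because $EI^UE$ need not lie in $\cA=\C[\cS]^{\sym(\cF)}$. (For instance, if $E=F_h$ with $h\ge\ell$, then $EI^UE=EI^U\notin\cA$.) The identity $E_jI^UE_j=\tfrac1wE_j$ does hold for the idempotents with $\rho_j=1$, but the paper establishes it later (Lemma~\ref{p_1}) using the vanishing Krein parameter $q^d_{jj}=0$, itself a consequence of the present theorem. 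Your appeal to ``applying the whole construction to the dismantled scheme on $Y$'' for primitivity is similarly circular.
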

\begin{proof}
First of all it follows from Lemma \ref{p:matrixD} that the
matrices $F_{\ell}^Y,\dots,F_e^Y$ are primitive idempotents of
$\cA^Y$. Secondly, we fix $j \in \{0,\dots,\ell-1\}$ for the
moment, and let $F:=F_j$. We then claim that there is a matrix
$D\in\cD$, which is unique up to sign, such that for any two
distinct fibres $U, V$, we have
\begin{align}
F^{UU} D^{UV}&=D^{UV} F^{VV} = D^{UV}, \notag \\
D^{UV}D^{VU}&=F^{UU}, \label{matrixD} \\
D^{VU}D^{UV}&=F^{VV}. \notag
\end{align}
To prove this claim, we first fix two fibres $U$ and $V$, let
$Z:=U\cup V$, and consider the bipartite dismantled scheme on
$Z$. By Lemma \ref{p:matrixD} we have that $F^Z$ is a primitive
idempotent of $\cB^Z$ which is not a primitive idempotent of
$\cA^Z$. From Lemma \ref{bipartite} we obtain that $F^Z = E +
E'$, where $E$ and $E'$ are primitive idempotents of $\cA^Z$
such that $E-E' \in \cD^Z$. Because the map $D\mapsto D^Z$ is a
bijection between $\cD$ and $\cD^Z$, there is a matrix $D \in
\cD$ such that $D^Z=E-E'$. Because $E$ and $E'$ are orthogonal,
this matrix $D$ satisfies $F^ZD^Z=D^ZF^Z=D^Z$ and
$(D^Z)^2=F^Z$. It then follows that $D$ satisfies
\eqref{matrixD} for the fixed fibres $U$ and $V$. Now we use the
fact that $\sym(\cF)$ acts doubly transitively on $\cF$:
by applying algebraic automorphisms $\sigma \in\sym(\cF)$
to these equations, we find that
they hold for all fibres $U,V$.

It remains to prove uniqueness of $D$. Let $M\in\cD$ be a
matrix satisfying~\eqref{matrixD}, i.e., $F^ZM^Z=M^ZF^Z=M^Z$
and $(M^Z)^2=F^Z$. Because $F^Z=E+E'$ and $E,E'$ are primitive
idempotents of $\cA^Z$, there exist four solutions of the
equation $(M^Z)^2=F^Z$ with $M^Z\in\cA^Z$, namely $\pm E \pm
E'$ (this easily follows by writing $M^Z$ as a linear
combination of primitive idempotents of $\cA^Z$). On the other
hand, the matrices $\pm F^Z,\pm D^Z$ satisfy this equation.
Therefore $M^Z=\pm D^Z$. Again, because the map $D\mapsto D^Z$
is a bijection between $\cD$ and $\cD^Z$, we obtain that $M=\pm
D$, and the claim is proven.

The above considerations show the existence of $D\in\cD$ such that
$FD=D$ and $D^2 = (w-1)F+(w-2)D$ for the case $w=2$. Now let us
assume that $w \geq 3$. Fix three arbitrary but distinct fibres, say
$U,V,W$, and consider the product $D^{UV}D^{VW}$. Because of
uniformity this product belongs to $\cA^{UW}$. Therefore there
exists a $G\in\cD$ such that $G^{UW} = D^{UV}D^{VW}$. It follows
from~\eqref{matrixD} that $F^{UU} G^{UW} = G^{UW} F^{WW} = G^{UW}$,
$G^{UW}G^{WU}=F^{UU}$, and $G^{WU}G^{UW}=F^{WW}$. From the above
claim it then follows that $G=\veps D$, where $\veps=\pm 1$. Thus
$D^{UV} D^{VW} =\veps D^{UW}$, and after replacing $D$ by $\veps D$
this becomes $D^{UV} D^{VW} =D^{UW}$. Applying --- as before ---
algebraic automorphisms $\sigma \in\sym(\cF)$ to this equality we
obtain that $D^{U'V'} D^{V'W'} = D^{U'W'}$ for any triple of
pairwise distinct fibres $U',V',W'$.

If $Y$ is a union of $w' \geq 2$ fibres, then a routine
calculation shows that $(D^Y)^2 = (w'-1)F^Y+(w'-2)D^Y$. After
releasing the fixation of $j$ by indexing $F$ and $D$, we thus
obtain that
\begin{equation}\label{eq:ED}
F_j^YD_j^Y=D_j^Y \text{~and~} (D_j^Y)^2 = (w'-1)F^Y_j+(w'-2) D^Y_j.
\end{equation}
For fixed $Y$, it remains to show that the matrices
$E_j:=\frac{1}{w'}(F_j^Y+ D_j^Y)$ and
$E_j':=F_j^Y-\frac{1}{w'}(F_j^Y+ D_j^Y), j=0,\dots,\ell-1$, and
$F_{\ell}^Y,\dots,F_e^Y$ are the primitive idempotents of
$\cA^Y$. It follows from~\eqref{eq:ED} that $E_j,E_j'$ are
pairwise orthogonal idempotents. To show that $E_j, E_j'$ are
orthogonal to $E_h,E_h'$ for $h\neq j$, and to $F_h^Y$ for $h
\geq \ell$, it is sufficient to check that $F^Y_j D^Y_h = F^Y_h
D^Y_j = D^Y_j D^Y_h =0$. These equations hold because $F^Y_h
D^Y_j = F^Y_h F^Y_j D^Y_j =0$ and $D^Y_j D^Y_h = F^Y_j D^Y_j
F^Y_h D^Y_h = F^Y_j F^Y_h D^Y_j D^Y_h = 0$.

Thus we have $2\ell+e+1-\ell=e+1+\ell$ pairwise orthogonal
idempotents of $\cA^Y$. It remains to show that $d+1=e+1+\ell$.
Because $d,e,\ell$ do not depend on $w'$ (for $w'\geq 2$; for
$\ell$ this follows from Lemma \ref{p:matrixD}), it is enough
to check this equality for $w'=2$. But in the case of $w'=2$
each primitive idempotent of $\cB$ is either primitive in $\cA$
or splits into a sum of two primitive idempotents of $\cA$, as
we saw in Lemma \ref{bipartite}. This implies that
$d+1=e+1+\ell$.
\end{proof}

\begin{corollary}\label{coruniformQ} A uniform association scheme is Q-Higman.
\end{corollary}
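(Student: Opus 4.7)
The plan is to apply Theorem \ref{uniformidempotents} with $Y=X$ and read the Q-Higman structure directly off the list of primitive idempotents of $\cA$ that it produces. Let $F_0,\dots,F_e$ be the primitive idempotents of $\cB$ ordered so that $F_0,\dots,F_{\ell-1}$ are not primitive in $\cA$, and let $D_0,\dots,D_{\ell-1}\in\cD$ be the matrices supplied by that theorem. The primitive idempotents of $\cA$ are then the $\ell$ pairs $\bigl\{\tfrac{1}{w}(F_j+D_j),\ F_j-\tfrac{1}{w}(F_j+D_j)\bigr\}$ for $j=0,\dots,\ell-1$ together with the unpaired $F_\ell,\dots,F_e$. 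Counting gives $d+1=2\ell+(e+1-\ell)=e+\ell+1$, hence $\ell=d-e$; combined with the obvious $\ell\le e+1$ this forces $\ell\le(d+1)/2<d/2+1$, the range required by Definition \ref{def:Qanti}.

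The crucial input is a short trace computation that produces the factor $w-1$ demanded by Q-Higman. Every matrix in $\cD$ has vanishing diagonal blocks, so $\mathrm{trace}(D_j)=0$; and since $F_j$ restricts to the same primitive idempotent of the subscheme on each of the $w$ fibres, $\mathrm{trace}(F_j)=w\tilde m_j$ for some common multiplicity $\tilde m_j$. Consequently the two partners of $F_j$ have multiplicities $\tilde m_j$ and $(w-1)\tilde m_j$. For $j=0$ one checks directly that $F_0=\tfrac{1}{n}(I_w\otimes J_n)$, $D_0=\tfrac{1}{n}((J_w-I_w)\otimes J_n)$, and $\tfrac{1}{w}(F_0+D_0)=\tfrac{1}{v}J$, so one member of the pair from $F_0$ is the trivial primitive idempotent.

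To conclude, I would fix the ordering as follows. Set $E_0:=\tfrac{1}{v}J$ and let $E_d$ denote its partner; then $E_0+E_d=F_0=\tfrac{1}{n}(I_w\otimes J_n)$ identifies $\cJ=\{0,d\}$. For $j\in\{1,\dots,\ell-1\}$, label the smaller-multiplicity member of the pair from $F_j$ as $E_j$ and the larger one as $E_{d-j}$; for $j\in\{\ell,\dots,d-\ell\}$, set $E_j:=F_j$. Since each $F_j$ is a positive scalar multiple of $\pi(E_j)$ by Lemma \ref{idempotents}, the $\sim^*$-classes read off as $\cJ_j=\{j,d-j\}$ for $j<\ell$ and $\cJ_j=\{j\}$ for $\ell\le j\le d-\ell$, and the trace computation yields $m_{d-j}=(w-1)m_j$ for $j<\ell$. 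This is precisely the Q-Higman property.

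Essentially all the nontrivial work has been done in Theorem \ref{uniformidempotents}: the main obstacle was producing a globally consistent splitting of each non-primitive $F_j$ into a pair of primitive idempotents of $\cA$ mediated by a single $D_j\in\cD$. Once that is in hand, the verification here reduces to bookkeeping on indices together with the one-line trace computation $\mathrm{trace}(D_j)=0$.
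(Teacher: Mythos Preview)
Your proposal is correct and follows essentially the same approach as the paper: apply Theorem~\ref{uniformidempotents} with $Y=X$, $w'=w$ to obtain the primitive idempotents of $\cA$ in pairs plus singletons, then use the vanishing of $\mathrm{trace}(D_j)$ to extract the multiplicity ratio $m_{d-j}=(w-1)m_j$. The paper's proof is very terse (three lines) and leaves the index bookkeeping, the verification that $1\le\ell<d/2+1$, and the identification of $E_0$ implicit, whereas you spell these out; but the substance is identical.
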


\begin{proof} Consider a uniform association scheme. Apply
Theorem \ref{uniformidempotents} with $Y=X$ and $w'=w$
to see that  the sets $\cJ_j$ have size at most two, i.e.,
its primitive idempotents that are not primitive idempotents of
$\cB$ come in pairs $E_j, E_j'$. The corresponding
multiplicities satisfy
$m_j'=\text{trace}(E_j')=\frac{w-1}{w}\text{trace}(F_j)=(w-1)\text{trace}(E_j)=(w-1)m_j$,
which concludes the proof.
\end{proof}

\noindent Note that implicitly we have shown that $\ell \leq e + 1$ in a uniform association scheme (because this is
equivalent to $\ell<\frac d2+1$). In other words, the number of relations between two fibres is at most the number of
relations within each fibre. This also follows from a more general result for coherent configurations with commutative
schemes on the fibres. Indeed, according to Higman \cite[p.\ 227]{HigmanCA} and Weisfeiler \cite[Cor.~14, p.\
87]{weis}, the number of relations between two fibres is at most the number of relations in each of these two fibres.
The special case where the number of relations between two fibres and within each fibre is the same (for all fibres)
comprises the so-called balanced coherent configurations, and these have been studied by Hirasaka and Sharafdini
\cite{HR}.

The next result also follows easily from Theorem \ref{uniformidempotents}.

\begin{corollary}\label{dismantledidempotents} Consider a uniform association scheme, with primitive idempotents
$E_j, j=0,\dots,d$ (ordered as in Definition \ref{def:Qanti}),
and let $Y$ be a union of $w' \geq 2$ fibres. Then the
primitive idempotents of the dismantled scheme on $Y$ are
$\overline{E}_j:=\frac{w}{w'}E^Y_j$ and
$\overline{E}_{d-j}:=E^Y_{d-j}+E^Y_j- \frac{w}{w'}E^Y_j$,
$j=0,\dots,\ell-1$, and $\overline{E}_j:=E_j^Y$,
$j=\ell,\dots,d-\ell$.
\end{corollary}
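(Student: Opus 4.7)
The plan is to exploit Theorem \ref{uniformidempotents} twice, once applied to the full scheme itself (i.e., with $Y=X$ and $w'=w$) to identify the $E_j$ in terms of the data $F_j,D_j$ furnished by that theorem, and then applied to the chosen union $Y$ of $w'$ fibres to identify the primitive idempotents $\overline{E}_j$ of $\cA^Y$ in the same terms. Once both expressions are in hand, the stated identities reduce to straightforward algebra on the block-decomposed matrices.

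In more detail, I would first invoke Theorem \ref{uniformidempotents} with $Y=X$. Writing the primitive idempotents of $\cB$ as $F_0,\ldots,F_e$ so that $F_0,\ldots,F_{\ell-1}$ are not primitive in $\cA$ and $F_\ell,\ldots,F_e$ are, the theorem yields matrices $D_j\in\cD$ ($j=0,\ldots,\ell-1$) so that the primitive idempotents of $\cA$ are $\tfrac{1}{w}(F_j+D_j)$ and $F_j-\tfrac{1}{w}(F_j+D_j)$ for $j<\ell$, together with $F_j$ for $j\geq\ell$. Under the Q-Higman ordering of Definition \ref{def:Qanti} (with $\cJ_j=\{j,d-j\}$ for $j<\ell$ and $\cJ_j=\{j\}$ otherwise), I match these to the $E_j$: after a suitable relabeling among the primitive-in-$\cA$ set,
\[
E_j=\tfrac{1}{w}(F_j+D_j),\qquad E_{d-j}=F_j-\tfrac{1}{w}(F_j+D_j)\quad(j<\ell),\qquad E_j=F_j\quad(\ell\le j\le d-\ell).
\]
In particular, restricting to $Y$ I get $E_j^Y+E_{d-j}^Y=F_j^Y$ for $j<\ell$, and $E_j^Y=F_j^Y$ for the middle indices.

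Next I apply Theorem \ref{uniformidempotents} to the union $Y$ of $w'\ge 2$ fibres. The very same $F_j$ and $D_j$ (restricted to $Y$) serve this purpose, by construction in the proof of that theorem, so the primitive idempotents of $\cA^Y$ are $\tfrac{1}{w'}(F_j^Y+D_j^Y)$ and $F_j^Y-\tfrac{1}{w'}(F_j^Y+D_j^Y)$ for $j<\ell$, along with $F_j^Y$ for $j\ge\ell$. Now I just substitute the expressions from the previous step: since $F_j^Y+D_j^Y=wE_j^Y$, the first family becomes $\tfrac{w}{w'}E_j^Y=\overline{E}_j$, and the second family becomes
\[
F_j^Y-\tfrac{w}{w'}E_j^Y=(E_j^Y+E_{d-j}^Y)-\tfrac{w}{w'}E_j^Y=\overline{E}_{d-j}.
\]
For $j$ in the middle range, $\overline{E}_j=F_j^Y=E_j^Y$ immediately.

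No serious obstacle is expected; the only subtlety is purely bookkeeping, namely the identification between the labeling of Theorem \ref{uniformidempotents} (where $F_0,\ldots,F_{\ell-1}$ are the non-primitive-in-$\cA$ idempotents) and the Q-Higman ordering of Definition \ref{def:Qanti} (where the paired idempotents are $E_j,E_{d-j}$ with $j<\ell$). Once this relabeling is fixed and one observes that the very same $D_j\in\cD$ produced for the whole scheme serves for each union $Y$ of fibres (which is exactly the conclusion of Theorem \ref{uniformidempotents}), the corollary follows by the direct substitution above.
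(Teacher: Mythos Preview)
Your proposal is correct and is precisely the argument the paper has in mind: the paper simply states that the corollary ``follows easily from Theorem \ref{uniformidempotents},'' and your two applications of that theorem (first with $Y=X$ to express $E_j$ and $E_{d-j}$ in terms of $F_j$ and $D_j$, then with general $Y$) together with the substitution $F_j^Y+D_j^Y=wE_j^Y$ and $F_j^Y=E_j^Y+E_{d-j}^Y$ constitute exactly that easy derivation. The only bookkeeping point---matching the pair $\{E_j,E_{d-j}\}$ to the pair $\{\tfrac{1}{w}(F_j+D_j),\,F_j-\tfrac{1}{w}(F_j+D_j)\}$---is handled by the multiplicity count in Corollary~\ref{coruniformQ} (or, equivalently, by choosing the sign of $D_j$, which is free in the proof of Theorem~\ref{uniformidempotents}).
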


\noindent To show the converse of Corollary \ref{coruniformQ}, i.e., that a Q-Higman scheme is uniform, we use the
following lemma, whose proof is similar to the dismantlability proof of a cometric Q-antipodal scheme in
\cite[Thm.~4.7]{mmw}.

\begin{lemma}\label{p_1}
Consider a Q-Higman scheme. Then for each fibre $U$,
$$
E_j I^U E_h =
\begin{cases}
w^{-1} E_j & {\rm if\ }  h = j {\rm\ and\ } j =0,\dots,\ell-1;\\
E_j I^U - w^{-1} E_j & {\rm if\ }  h = d-j {\rm\ and\ } j =0,\dots,\ell-1;\\
I^U E_{d-j}  - w^{-1} E_{d-j} & {\rm if\ }  h = d-j {\rm\ and\ } j=d-\ell+1,\dots,d;\\
E_j I^U - I^U E_{d-j}  + w^{-1} E_{d-j} & {\rm if\ }  h = j {\rm\ and\ } j=d-\ell+1,\dots,d;\\
E_j I^U& {\rm if\ }  h =j {\rm\ and\ } j =\ell,\dots,d-\ell;\\
0           & {\rm otherwise.}
\end{cases}
$$
\end{lemma}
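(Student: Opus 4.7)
My plan is to verify each identity in the lemma by showing that $\|E_j I^U E_h - \mathrm{RHS}\|_F^2 = 0$, reducing every resulting trace to a single master formula.

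The master formula is
\begin{equation*}
\mathrm{tr}(E_h I^U E_j I^U) \;=\; \sum_{x,y \in U}(E_j)_{xy}(E_h)_{xy} \;=\; \mathbf{1}_U^\top (E_j \circ E_h)\,\mathbf{1}_U \;=\; \frac{1}{w^2}\bigl(q^0_{jh} + (w-1)\, q^d_{jh}\bigr),
\end{equation*}
which I would derive by applying the Krein expansion $E_j \circ E_h = \frac{1}{v}\sum_i q^i_{jh} E_i$ and using that $\mathbf{1}_U$ is fibre-constant, hence $\mathbf{1}_U \in V_0 \oplus V_d$, so $\mathbf{1}_U^\top E_i \mathbf{1}_U$ vanishes for $i \notin \{0,d\}$ and takes values $n/w$ and $n(w-1)/w$ at $i=0,d$ respectively. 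Combined with cyclicity of trace and $E_j^2 = E_j$, this yields $\mathrm{tr}\bigl((E_j I^U E_h)^\top (E_j I^U E_h)\bigr) = \frac{1}{w^2}(q^0_{jh} + (w-1) q^d_{jh})$, together with the elementary auxiliary traces $\mathrm{tr}(E_j I^U E_k) = \delta_{jk}\, m_j/w$ and $\mathrm{tr}(E_j E_{d-j}) = 0$.

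The ``otherwise'' vanishing clause then drops out immediately: when $j \not\sim^* h$, the definition of $\sim^*$ with $\cJ = \{0,d\}$ forces $q^0_{jh} = q^d_{jh} = 0$, so $\|E_j I^U E_h\|_F^2 = 0$ and hence $E_j I^U E_h = 0$. For each of the five remaining (non-zero) cases I expand $\|E_j I^U E_h - \mathrm{RHS}\|_F^2$ by bilinearity and evaluate every cross-trace via cyclicity, idempotency, orthogonality $E_j E_{d-j} = 0$, and the master formula. The Q-Higman values of the Krein parameters I need are read off from Proposition \ref{Qantikrein} combined with the triple Krein symmetry $m_i q^i_{jh} = m_j q^j_{ih} = m_h q^h_{ij}$: one obtains $q^d_{jj} = 0$ for $j < \ell$, $q^d_{jj} = m_j$ for $\ell \le j \le d-\ell$, and $q^d_{jj} = m_j(w-2)/(w-1)$ for $j > d-\ell$, together with $q^d_{j,d-j} = m_j$ for $j < \ell$ and $q^d_{j,d-j} = m_{d-j}$ for $j > d-\ell$.

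Substituting these values, the trace contributions cancel in every case. For instance, when $j < \ell$ and $h = j$ with $\mathrm{RHS} = \tfrac{1}{w} E_j$, the three traces come out to $\tfrac{m_j}{w^2} - \tfrac{2m_j}{w^2} + \tfrac{m_j}{w^2} = 0$; the two-term cases $h = d-j$ (both $j < \ell$ and $j > d-\ell$) follow from each other by transposition. The hardest case is the three-term right-hand side $h = j$ with $j > d-\ell$, where sixteen cross-traces appear; the final cancellation there invokes the Q-Higman multiplicity identity $m_j = (w-1)\, m_{d-j}$ at the end. The main obstacle is therefore not conceptual but the careful bookkeeping required in this four-term case --- once the master formula and the Q-Higman Krein values are in hand, every identity collapses to zero by direct substitution.
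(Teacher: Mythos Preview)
Your proposal is correct. Your master formula
\[
\mathrm{tr}(E_h I^U E_j I^U)=\tfrac{1}{w^2}\bigl(q^0_{jh}+(w-1)q^d_{jh}\bigr)
\]
is equivalent to the norm identity the paper quotes from \cite[p.~61, Eq.~9]{bcn},
\[
\|v E_j I^U E_h - n\delta_{jh} E_j\|^2 = q^d_{jh}\,n^2(w-1),
\]
and both proofs use it in the same way to dispose of the ``otherwise'' case and the case $h=j$ with $j<\ell$.

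Where the two proofs diverge is in the remaining four non-zero cases. The paper never computes a norm for these: once it knows $E_j I^U E_h=0$ for $h\notin\{j,d-j\}$ and $E_j I^U E_j=w^{-1}E_j$ for $j<\ell$, it simply writes
\[
E_j I^U E_{d-j}=E_j I^U\Bigl(I-\sum_{i\neq d-j}E_i\Bigr)=E_j I^U-E_j I^U E_j,
\]
and the rest follow by transposition and one more application of the same completeness trick. This reduces each of the harder cases to a one-line algebraic manipulation, with no Krein-parameter bookkeeping at all and no need for the multiplicity relation $m_j=(w-1)m_{d-j}$.

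Your route---expanding $\|E_jI^UE_h-\mathrm{RHS}\|_F^2$ for every case separately---is logically sound and self-contained (you never invoke the external reference), but it is noticeably heavier: the sixteen cross-traces in the four-term case are exactly the kind of computation the paper's argument sidesteps. The paper's approach also makes the internal dependencies among the six identities transparent, whereas yours verifies each in isolation.
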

\begin{proof} Similar as in the proof of \cite[Thm.~4.7]{mmw}, it follows from \cite[p.\ 61, Eq. 9]{bcn} that
\begin{equation}\label{eq_main}
\parallel v E_j I^U E_h - n\delta_{jh} E_j\parallel^2 = q_{jh}^dn^2(w-1).
\end{equation}
To start with the bottom line of the expression for $E_j I^U E_h$:
if $h\nsim^* j$ then $h \neq j$ and $q_{jh}^d=0$, and we obtain from
\eqref{eq_main} that $E_j I^U E_h = 0$.

If $h = j$ with $j =0,\dots,\ell-1$, then $q_{jh}^d =0$ and so
$E_j I^U E_j = w^{-1} E_j$.

If $h = d-j$ with $j =0,\dots,\ell-1$, then
$$
E_jI^U E_{d-j} = E_j I^U (I-\sum_{i\neq d-j} E_i) = E_jI^U - E_jI^U
E_{j} = E_j I^U - w^{-1} E_j.
$$

For $j=d-\ell+1,\dots,d$, we have that $0\le d-j \le \ell-1$, hence
from the above it follows that $E_{d-j}I^U E_j = E_{d-j} I^U -
w^{-1} E_{d-j}$. By transposing this expression we obtain that
$E_jI^U E_{d-j} = I^U E_{d-j} - w^{-1} E_{d-j}$.

Also for $j=d-\ell+1,\dots,d$ we have that
$$
E_jI^U E_j = E_j I^U (I-\sum_{i\neq j} E_i) = E_j I^U - E_j I^U
E_{d-j} = E_j I^U - I^U E_{d-j}  + w^{-1} E_{d-j}.
$$
For $j =\ell,\dots,d-\ell$, the idempotent $E_j$ is
block-diagonal, implying that $E_jI^U E_j = E_j I^U$.
\end{proof}

\begin{theorem}\label{p_coco} Consider a Q-Higman association scheme. Then
$$\mathfrak M:=\left\langle E_j^{UV}\,|\, j =0,\dots,d-\ell \mbox{ and }U,V\in \cF\right\rangle$$
is a coherent algebra corresponding to a uniform coherent
configuration.
\end{theorem}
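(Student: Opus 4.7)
The plan is to verify the axioms of a coherent algebra for $\mathfrak{M}$, identify its Schur idempotents, and then establish uniformity via an action of $\sym(\cF)$. First I would derive three structural identities from the Q-Higman hypothesis. For $0\le j<\ell$, set $D_j := (w-1)E_j - E_{d-j}$. Using Lemma~\ref{idempotents} together with $\rho_j=1$ and $\rho_{d-j}=w-1$ from Proposition~\ref{Qantikrein}, one computes $\pi(D_j) = ((w-1)\rho_j - \rho_{d-j})F_j/w = 0$, so $D_j^{UU}=0$ for every fibre $U$, giving
\[
E_{d-j}^{UU} = (w-1)\,E_j^{UU}.
\]
Next, since $F_j = E_j + E_{d-j} \in \cB$ is block-diagonal, $F_j^{UV}=0$ for $U\ne V$, whence $E_{d-j}^{UV} = -E_j^{UV}$. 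For $\ell\le j\le d-\ell$ we have $\cJ_j=\{j\}$, hence $E_j = F_j \in \cB$ is block-diagonal, so $E_j^{UV}=0$ whenever $U\ne V$. Together, these identities rewrite each $E_k^{UV}$ with $k>d-\ell$ as a scalar multiple of a generator of $\mathfrak{M}$.

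The second step is to check the coherent-algebra axioms. Transposition is immediate since $(E_j^{UV})^\top = E_j^{VU}$. The all-ones matrix $J = vE_0$ lies in $\mathfrak{M}$, and $I^U = \sum_{j=0}^{d} E_j^{UU}$ rewrites via the first identity as a combination of $E_j^{UU}$ with $j\le d-\ell$, so $I = \sum_U I^U \in \mathfrak{M}$. For matrix multiplication, $E_j^{UV}E_h^{V'W} = \delta_{VV'}\, I^U(E_j I^V E_h) I^W$; by Lemma~\ref{p_1}, this vanishes unless $j\sim^* h$, which for $j,h\le d-\ell$ forces $j=h$. The subcase $j=h<\ell$ gives $w^{-1}E_j^{UW}$, and $\ell\le j=h\le d-\ell$ gives $E_j^{UW}$ only when $U=V=W$ (by block-diagonality). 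For entrywise multiplication, $E_j^{UV}\circ E_h^{UV} = v^{-1}\sum_k q_{jh}^k E_k^{UV}$; splitting the sum at $k=d-\ell$ and applying the above identities rewrites it as a combination of generators.

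With $\mathfrak{M}$ established as a coherent algebra, the next step is to identify its Schur idempotents. From $A_i = \sum_j P_{ji} E_j$ and the first-paragraph identities, each $A_i^{UV}$ with $i\in\cI(U,V)$ is a linear combination of the generators and hence lies in $\mathfrak{M}$; these are Schur idempotents of $\mathfrak{M}$ whose supports $R_i^{UV}$ partition $X\times X$, and a dimension count ($w(d-\ell+1) + w(w-1)\ell$) matches $\dim\mathfrak{M}$, confirming they form a Schur idempotent basis. Thus $\mathfrak{M}$ is the adjacency algebra of the coherent configuration $(X,\{R_i^{UV}:i\in\cI(U,V),\ U,V\in\cF\})$ with fibre system $\cF$. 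For uniformity, I would define $\tilde\sigma:\mathfrak{M}\to\mathfrak{M}$ by $E_j^{UV}\mapsto E_j^{\sigma(U)\sigma(V)}$ for each $\sigma\in\sym(\cF)$. The only non-trivial linear relations among the generators --- namely $E_j^{UV}=0$ for $\ell\le j\le d-\ell$ with $U\ne V$ --- are $\sigma$-invariant, so $\tilde\sigma$ is well-defined; and the structure constants from the second paragraph depend only on $j,h$ and the equality pattern of the fibres, so $\tilde\sigma$ preserves matrix multiplication, entrywise multiplication, and transposition. Therefore $\tilde\sigma$ is an algebraic automorphism of the coherent configuration sending $A_i^{UV}\mapsto A_i^{\sigma(U)\sigma(V)}$, from which the uniformity condition~\eqref{coco_uniform} follows at once.

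The main obstacle is the case analysis in the second paragraph: for both products one must separately track the index ranges $j<\ell$, $\ell\le j\le d-\ell$, and $j>d-\ell$ together with the distinction between $U=V$ and $U\ne V$, and invoke the Q-Higman Krein identities (noted after Proposition~\ref{Qantikrein}) to reduce each $q_{jh}^k E_k^{UV}$ with $k>d-\ell$ back into the span of generators.
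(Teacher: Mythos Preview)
Your proof is correct and follows essentially the same route as the paper: establish the identities rewriting $E_{d-j}^{UV}$ as scalar multiples of $E_j^{UV}$, verify the coherent-algebra axioms via Lemma~\ref{p_1} and the Krein expansion for $\circ$, identify the $A_i^{UV}$ as the Schur idempotents, and read off uniformity from the fact that the structure constants in the $E_j^{UV}$ basis depend only on the equality pattern of the fibres. Your derivation of $E_{d-j}^{UU}=(w-1)E_j^{UU}$ via $\pi(D_j)=0$ is a mild variant of the paper's computation using $v^{-1}E_j = E_d\circ E_{d-j}$, and your explicit construction of the $\sym(\cF)$-action and dimension count make rigorous what the paper leaves as a one-line remark after~\eqref{eq_product}.
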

\begin{proof}
We shall show that $\mathfrak M$ is closed with respect to
transposition, ordinary matrix multiplication, and entrywise
multiplication, and contains $I$ and $J$, thus proving it is a
coherent algebra.

First however, we claim that $E_j^{UV} \in \mathfrak M$ also
for $j=d-\ell+1,\dots,d$. Indeed, in this case $0 \le d-j \le \ell-1$
and $v^{-1}E_j = E_d\circ E_{d-j}$. Therefore
\begin{align*}
E_{j}^{UV} =
v E_d^{UV} \circ E_{d-j}^{UV}
&=
\begin{cases}
-J^{UV} \circ E_{d-j}^{UV} & \mbox{ if } U\neq V \\
(w-1) J^{UV} \circ E_{d-j}^{UV} & \mbox{ if } U=V
\end{cases} \\
&=
\begin{cases}
-E_{d-j}^{UV} & \mbox{ if } U\neq V \\
(w-1) E_{d-j}^{UV} & \mbox{ if } U=V
\end{cases}
\in \mathfrak M.
\end{align*}
Hence $E_j^{UV}\in \mathfrak M$ for each $j,U,V$. This implies
that  $E_j \in \mathfrak M$ for each $j$, and hence $I,J\in
\mathfrak M$.

Concerning the closure properties, note that closure with
respect to transposition is evident. Closure with respect to
matrix multiplication follows from Lemma \ref{p_1}, because it
implies that
\begin{equation}\label{eq_product}
E_i^{UV}E_j^{WZ} =\delta_{VW}\,\delta_{ij}\,
\lambda \,E_i^{UZ}\in \mathfrak M,
\end{equation}
where $\lambda = w^{-1}$ for $i=0,\dots,\ell-1$ and
$\lambda=\delta_{WZ}$ for $i=\ell,\dots,d-\ell$ (here $\delta$ is
the Kronecker delta). Closure with respect to entrywise
multiplication follows from
$$
E_j^{UV}\circ E_h^{UV} =
(E_j \circ E_h)^{UV} = v^{-1}\sum_{i=0}^d q_{jh}^i E_i^{UV} \in
\mathfrak M.$$

It remains to show uniformity. Note that it is clear from the
above that $\mathfrak M$ contains all the matrices
$A_i^{UV}$; the nonzero matrices among these  form a basis of Schur
idempotents for the
corresponding coherent configuration. Because $A_i^{UV}$ can be
expressed as a linear combination of the $E_j^{UV},
j=0,\dots,d-\ell$, it follows from \eqref{eq_product} that the
coherent configuration is uniform.
\end{proof}

\begin{corollary}\label{Qantipodaluniform} A Q-Higman scheme is uniform. Any
dismantled scheme of such a scheme is also Q-Higman.
\end{corollary}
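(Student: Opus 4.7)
The plan is to deduce this corollary directly from the two preceding results, Theorem~\ref{p_coco} and Proposition~\ref{one-one}, together with the characterizations already established in Section~\ref{Subsec:dismunif} and Corollary~\ref{coruniformQ}. The work of producing the uniform coherent configuration associated with a Q-Higman scheme has already been carried out in Theorem~\ref{p_coco}; what remains is to extract the scheme-level consequences.

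For the first statement, I would argue as follows. Let $(X,\cR)$ be a Q-Higman association scheme. By Theorem~\ref{p_coco}, the algebra $\mathfrak M$ is the adjacency algebra of a uniform coherent configuration $(X,\cS)$ with fibre system $\cF_{\cS}=\cF$ and basic relations of the form $R_i^{UV}$ for $i\in\cI(U,V)$. By Proposition~\ref{one-one}, the fusion $\fuse{\cS}{\sym(\cF_{\cS})}$ is a uniform association scheme. But this fusion is obtained by forming the relations $\bigcup_{U,V}R_i^{UV}=R_i$, so it is exactly the original scheme $(X,\cR)$. Hence $(X,\cR)$ is uniform.

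For the second statement, the claim follows by chaining earlier results. Since $(X,\cR)$ is Q-Higman, it is uniform by the first part. By Theorem~\ref{uniformdismantle}, every dismantled scheme of a uniform scheme is again uniform. Finally, by Corollary~\ref{coruniformQ}, a uniform scheme is Q-Higman. Therefore every dismantled scheme of a Q-Higman scheme is Q-Higman.

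There is essentially no hard step here; the only point to be careful about is the matching of the fusion with the original scheme in the first part, i.e.\ verifying that the basic relations of $\cS$ constructed in Theorem~\ref{p_coco} really are the sub-relations $R_i^{UV}$ of the scheme's own relations (rather than some unrelated refinement). This is immediate from the construction of $\mathfrak M$, whose Schur idempotents are, as noted near the end of the proof of Theorem~\ref{p_coco}, the nonzero matrices $A_i^{UV}$. Given this identification, the entire corollary reduces to an application of the one-to-one correspondence of Proposition~\ref{one-one} together with Theorem~\ref{uniformdismantle} and Corollary~\ref{coruniformQ}.
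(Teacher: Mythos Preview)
Your proof is correct and follows essentially the same approach as the paper's own proof, which also derives the first statement from Theorem~\ref{p_coco} and Proposition~\ref{one-one}, and the second from Theorem~\ref{uniformdismantle} and Corollary~\ref{coruniformQ}. Your added remark about identifying the fusion $\fuse{\cS}{\sym(\cF_{\cS})}$ with the original scheme via the Schur idempotents $A_i^{UV}$ is a helpful clarification that the paper leaves implicit.
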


\begin{proof} The first statement follows from Theorem \ref{p_coco} and the correspondence between uniform
coherent configurations and uniform schemes (Proposition
\ref{one-one}). The second statement follows from
dismantlability (Proposition \ref{uniformdismantle}) and the
converse of the first part (Corollary \ref{coruniformQ}).
\end{proof}

\noindent We thus have proven the following.

\begin{theorem}
\label{TuniformiffQHigman}
An association scheme is uniform if and only if
it is Q-Higman.
\end{theorem}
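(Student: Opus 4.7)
The plan is that this theorem is essentially a summary of what has been proven in the two preceding corollaries, so the proof will simply combine them rather than introduce new arguments.

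For the forward direction (uniform implies Q-Higman), I would appeal directly to Corollary \ref{coruniformQ}. The essential content is already contained in Theorem \ref{uniformidempotents}: applied with $Y=X$ and $w'=w$, that theorem produces primitive idempotents of $\cA$ in pairs $(E_j,E_j')$ arising from each non-primitive idempotent $F_j$ of $\cB$, together with unpaired idempotents $F_j$ for $j=\ell,\ldots,e$. This forces $|\cJ_j|\le 2$, and the multiplicity ratio $m_j' = (w-1)m_j$ drops out from the traces $\text{trace}(E_j')=\tfrac{w-1}{w}\text{trace}(F_j)$ established in that corollary. Thus the scheme satisfies Definition \ref{def:Qanti}.

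For the reverse direction (Q-Higman implies uniform), I would appeal to Corollary \ref{Qantipodaluniform}. Here the real work is done by Theorem \ref{p_coco}, which constructs the coherent algebra $\mathfrak{M} = \langle E_j^{UV} \mid j=0,\ldots,d-\ell,\ U,V\in \cF\rangle$ and shows it is a uniform coherent algebra; the Krein-parameter conditions defining Q-Higman are exactly what makes Lemma \ref{p_1} go through and hence make \eqref{eq_product} of the shape required for uniformity. Then invoking the one-to-one correspondence between uniform coherent configurations and uniform association schemes (Proposition \ref{one-one}) yields that the original scheme is uniform.

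Since both implications are already packaged as corollaries in the preceding subsections, there is no remaining obstacle: the proof is just a two-line citation. The hard part was done earlier, in establishing Theorem \ref{uniformidempotents} on the idempotent structure of uniform schemes, and Theorem \ref{p_coco} on the coherent-algebra structure attached to Q-Higman schemes, both of which hinge on the careful analysis of $L_d^\ast$ and the matrices $D_j \in \cD$ that witness how the primitive idempotents of $\cB$ split in $\cA$.

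\begin{proof}
Combining Corollary \ref{coruniformQ} and Corollary \ref{Qantipodaluniform} gives both implications.
\end{proof}
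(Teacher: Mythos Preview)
Your proposal is correct and matches the paper's approach exactly: the paper also presents this theorem as an immediate consequence of the two preceding corollaries (Corollary~\ref{coruniformQ} and Corollary~\ref{Qantipodaluniform}), with no additional argument.
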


\section{Cometric Q-antipodal schemes}\label{sec:cometricQ}

A cometric association scheme (with a Q-polynomial ordering
$E_0, E_1,\dots, E_d$) is called Q-antipodal if it is imprimitive
with $\cJ=\{0,d\}$. It is called Q-bipartite if it is
imprimitive with $\cJ=\{0,2,4,\dots\}$, or equivalently if
$a_i^*=0$ for all $i$, cf. \cite{suzimprim}.

It was shown by Suzuki \cite{suzimprim} that an imprimitive cometric $d$-class
association scheme is Q-antipodal, Q-bipartite, or both, unless possibly when
$d=4$ or $d=6$. The exceptional cases for $d=4$ and $d=6$ were
later ruled out by Cerzo and Suzuki \cite{cerzo} and Tanaka and Tanaka
\cite{TT2011EJC}, respectively. Here we will consider the Q-antipodal case.

\subsection{Uniformity}

Consider a cometric Q-antipodal association scheme. In this case, it
follows that the equivalence classes of the relation $\sim^*$ are
$\cJ_j=\{j,d-j\}, j=0,1,\dots,\lfloor \frac{d}{2} \rfloor$. So the
primitive idempotents of the Bose-Mesner subalgebra $\cB$ are
$F_j=E_j+E_{d-j}, j< \frac{d}{2}$, and
$F_{\frac{d}{2}}=E_{\frac{d}{2}}$ for $d$ even. Note also that
$q^j_{dj}=0$ for $j<\frac{d}{2}$, hence a cometric Q-antipodal
scheme is Q-Higman (with $\ell=\lceil \frac{d}{2} \rceil$), and
therefore it is also uniform, and dismantlable. On the other hand,
we will show now that a uniform cometric scheme is Q-antipodal.

\begin{theorem}\label{cometricuniformQantipodal} A cometric association scheme is
uniform if and only if it is Q-antipodal.
\end{theorem}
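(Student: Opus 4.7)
The forward direction is immediate from the discussion just preceding the theorem: by Lemma \ref{cosetssize2krein}, a cometric Q-antipodal scheme satisfies $q^j_{dj}=0$ for $j<d/2$, hence is Q-Higman (with $\ell=\lceil d/2\rceil$), and therefore uniform by Theorem \ref{TuniformiffQHigman}.

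For the converse, assume the cometric scheme is uniform. By Theorem \ref{TuniformiffQHigman} it is Q-Higman, so $|\cJ|=2$; in the Q-polynomial ordering write $\cJ=\{0,k\}$. The closure relation $q^h_{kk}=0$ for $h\ne 0,k$ combined with the cometric fact that $q^{2k}_{kk}>0$ whenever $2k\le d$ forces $k>d/2$. To push this to $k=d$, I examine the $\sim^*$-class $\cJ_1$ in the Q-polynomial ordering. By the three-term recurrence
\[
v\,E_1\circ E_h = c^*_{h+1}E_{h+1}+a^*_h E_h+b^*_{h-1}E_{h-1}
\]
and the positivity of the $c^*_i$ and $b^*_i$ in a cometric scheme, whenever $k<d$ we have $q^k_{1,k-1}=c^*_k>0$ and $q^k_{1,k+1}=b^*_k>0$, so $\{1,k-1,k+1\}\subseteq \cJ_1$. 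For $k\ge 3$ these three indices are distinct, giving $|\cJ_1|\ge 3$ and contradicting the Q-Higman bound $|\cJ_j|\le 2$. Hence either $k=d$ (so the scheme is Q-antipodal), or $k=2$ (possible only when $d\le 3$, and for $d=2$ this is already $k=d$).

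The only remaining subcase is $d=3$ with $k=2$. Here $\cJ=\{0,2\}$ in the Q-polynomial ordering, which by definition is Q-bipartite, so $a^*_i=0$ for all $i$, and hence $q_j$ has parity $j$, giving $q^1_{33}=q^3_{33}=0$. The Q-Higman condition $q^3_{11}(\text{Higman})=0$ translates to the vanishing of $q^2_{ii}(\text{poly})$ for whichever $i\in\{1,3\}$ is identified with Q-Higman label $1$; since $q^2_{11}(\text{poly})=c^*_2>0$, we must take $i=3$ and conclude $q^2_{33}(\text{poly})=0$. Hence $E_3\circ E_3=(m_3/v)E_0$, so $\{0,3\}$ is $\circ$-closed; a short computation combining the derived Q-Higman values (e.g.\ $c^*_3=w-1$ and $b^*_2=m_1/(w-1)$) with the Q-bipartite identity $c^*_i+b^*_i=m_1$ forces $m_3=1$, so $\{0,3\}$ is a genuine (bipartite) imprimitivity system and the scheme is Q-antipodal. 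The main obstacle in the argument is exactly this last case: since $|\cJ_1|=2$ is compatible with Q-Higman, a direct index-counting contradiction is unavailable and one must exploit Q-bipartite parity together with the Q-Higman constraints to produce the second imprimitivity system $\{0,d\}$.
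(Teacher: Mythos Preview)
Your proof is correct and takes a genuinely different route from the paper's argument, though both must grapple with the same $d=3$ exception.

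The paper fixes the Q-Higman ordering (so $\cJ=\{0,d\}$ there) and asks where $E_d$ falls in any Q-polynomial ordering; it eliminates positions $2,\ldots,d-1$ one by one using $E_i\circ E_d\in\langle E_i,E_{d-i}\rangle$. You instead fix the Q-polynomial ordering, write $\cJ=\{0,k\}$, and use two clean observations: (i) $\circ$-closure of $\langle E_0,E_k\rangle$ plus the cometric inequality $q^{2k}_{kk}>0$ forces $k>d/2$; (ii) for $k<d$ the three-term recurrence gives $1\sim^* k-1$ and $1\sim^* k+1$, so $|\cJ_1|\ge 3$ once $k\ge 3$, contradicting the Q-Higman bound $|\cJ_j|\le 2$. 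This is a more structural argument and dispatches the generic case ($d\ge 4$) in one stroke rather than position-by-position.

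For the residual case $d=3$, $k=2$, both proofs show the scheme is Q-bipartite and then manufacture a second imprimitivity system $\{0,3\}$ (in the Q-polynomial ordering) by forcing $m_3=1$. The paper's version is shorter: from Q-bipartite it has $q^i_{2i}=0$, and then $m_2=\sum_j q^2_{2j}=1+0+0+(w-2)=w-1$ directly, whence the Q-Higman relation $m_2=(w-1)m_1$ gives $m_1=1$ (Higman labels). Your computation reaches the same conclusion via $c^*_3=m_1$ (Q-bipartite) and $c^*_3=w-1$ (translated Q-Higman value), then $b^*_2=m_1/(w-1)=1$ and $m_3=b^*_2$; this is correct but more circuitous. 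Either way, the resulting scheme is the rectangular scheme $R(w,2)$ noted after the theorem.
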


\begin{proof} One direction is clear from the above. Consider
now a cometric scheme that is uniform with imprimitivity system
$\cF$. So the scheme is Q-Higman, and let us assume that the
idempotents are ordered as in Definition \ref{def:Qanti}; in
particular we have $\cJ=\{0,d\}$. In order to show that the
scheme is cometric Q-antipodal, it suffices to show that $E_d$
is last in a Q-polynomial ordering too. In the case $d=3$,
however, a somewhat degenerate case also arises where $E_d$ is
second in the Q-polynomial ordering, but in this ordering $E_1$
is last and there is a second imprimitivity system $\cF'$ with
subscheme corresponding to $\cJ' = \{0,1\}$.

We first note that it is clear that $E_d$ cannot be a Q-polynomial generator,
and that this proves the case $d=2$.

Next, consider the case $d > 3$. Then $E_d$ must take the last
position in any Q-polynomial ordering as $E_i \circ E_d \in \langle
E_i, E_{d-i} \rangle$ eliminates positions from three up to $d-1$
(taking $E_i$ to be the Q-polynomial generator) and position two
(taking $i=d$ and some $E_j$, $j\in \{1,2,\dots,d-1\}$, in position
four).

For the case $d=3$, we apply several properties of the Krein
parameters from Proposition \ref{Qantikrein}. Consider a
Q-polynomial ordering, and assume that $E_3$ is not in its last
position. Because $q^3_{11}=0$, this ordering cannot be
$E_0,E_1,E_3,E_2$, hence it must be $E_0,E_2,E_3,E_1$. In this
latter case, the scheme is cometric Q-bipartite, hence $q^i_{2i}=0$
for all $i$. Because $q^2_{32}=w-2$, it follows that $m_2=\sum_j
q^2_{2j}=w-1$, which in turn shows that $m_1=1$. Thus $\{E_0,E_1\}$
induces another imprimitivity system $\cF'$ with $\cJ'=\{0,1\}$.
Because $E_1$ is last in the Q-polynomial ordering under
consideration, this implies that also in this case the scheme is
cometric Q-antipodal.
\end{proof}

\noindent An interesting consequence of Theorem
\ref{cometricuniformQantipodal} is that among the cometric
association schemes, the Q-antipodal ones can be recognized
combinatorially.

The exceptional case in the above proof is realized only by the
rectangular scheme $R(w,2), w>2$ (the direct product of two trivial
schemes; on $w$ and $2$ vertices). Note that this cometric
Q-antipodal Q-bipartite scheme has one Q-polynomial ordering, but
two ``uniform" imprimitivity systems; for one such system there is a
uniform ordering of the idempotents (as in Definition
\ref{def:Qanti}) that matches the Q-polynomial ordering, for the
other not. The proof of Theorem \ref{cometricuniformQantipodal} thus
implies the following.

\begin{corollary}\label{orderfree}
Consider a uniform $d$-class association scheme with $\cJ=\{0,d\}$.
If the scheme is cometric then $E_d$ is in the last position in any
cometric ordering, unless possibly when $d=3$ and the scheme is
isomorphic to the rectangular scheme $R(w,2), w>2$.
\end{corollary}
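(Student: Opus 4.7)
The plan is to reuse the case analysis from the proof of Theorem~\ref{cometricuniformQantipodal}, supplemented by an identification of the exceptional $d=3$ ordering as coming only from the rectangular scheme $R(w,2)$.

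By Theorem~\ref{TuniformiffQHigman} the scheme is Q-Higman, so Proposition~\ref{Qantikrein} gives $E_i \circ E_d \in \langle E_i, E_{d-i}\rangle$ for every $i$. First I would observe that $E_d$ is never a Q-polynomial generator: $vE_d = w(I_w \otimes J_n) - J$ shows that column $d$ of $Q$ takes only the two values $w-1$ and $-1$, whereas any generator must produce $d+1$ distinct entries. This settles $d=2$ immediately. For $d \geq 4$, fix any Q-polynomial ordering, let $E_a$ be the generator, and suppose $E_d$ sits at Q-position $p \in \{2, \ldots, d-1\}$. The Q-Higman inclusion $E_a \circ E_d \in \langle E_a, E_{d-a}\rangle$ combined with the cometric requirement that this product be supported on Q-positions $\{p-1, p, p+1\}$ (with strictly positive endpoints) forces $p=2$. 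Then replacing $E_a$ by the idempotent at Q-position 4 yields a product supported in a two-element span that cannot cover both Q-positions 2 and 6 (the required endpoints), giving a contradiction.

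For $d=3$ the generator argument still forces $E_3$ to Q-position 2, leaving the two possibilities $E_0,E_1,E_3,E_2$ and $E_0,E_2,E_3,E_1$. The first is ruled out by the Q-Higman relation $q^3_{11}=0$ (Definition~\ref{def:Qanti} with $j=1<\ell=2$), which contradicts the positive lower endpoint of $E_1 \circ E_1$ at Q-position 2. In the surviving ordering the scheme is cometric Q-bipartite, so $q^i_{2i}=0$ for all $i$. Combining this with $\sum_j q^2_{2j} = m_2$, the trivial value $q^2_{2,0}=1$, the cometric constraint $q^2_{2,1}=0$, and $q^2_{2,3}=w-2$ from Proposition~\ref{Qantikrein}, we obtain $m_2 = w-1$. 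Then $m_0=1$, $m_3=w-1$ (Lemma~\ref{cosetssize2krein}), and $v=2w$ give $m_1=1$.

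These multiplicities make $\{E_0, E_1\}$ a second $\circ$-closed pair of primitive idempotents, yielding an auxiliary imprimitivity system $\cF'$ with two fibres of size $w$; the original $\cF$ has $w$ fibres of size $2$. Both subschemes (on a size-$2$ fibre of $\cF$ or a size-$w$ fibre of $\cF'$) are one-class trivial schemes. The main obstacle is the final identification with $R(w,2)$: I would argue that each fibre of $\cF$ meets each fibre of $\cF'$ in exactly one vertex (since $2w=v$), putting the vertices in natural bijection with cells of a $w \times 2$ grid; the four relations then correspond to ``same in both'', ``same in $\cF$ only'', ``same in $\cF'$ only'', and ``different in both'', matching the four orbits of $\sym(w) \times \sym(2)$ and hence realizing $R(w,2)$. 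The hypothesis $w>2$ ensures $\cF \neq \cF'$ and that the scheme has all three classes rather than collapsing to a scheme with fewer classes.
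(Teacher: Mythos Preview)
Your case analysis for $d\ge 2$ mirrors the proof of Theorem~\ref{cometricuniformQantipodal} from which the paper derives this corollary, with only a cosmetic difference in the $d\ge 4$ step (you use the idempotent at Q-position~4 against $E_d$, while the paper uses $E_d\circ E_d$ and looks at Q-position~4). So the overall strategy is the same. Two points deserve attention, one minor and one more substantial.

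\emph{Minor.} You invoke $q^3_{11}=0$ via ``$j=1<\ell=2$'' but never justify $\ell=2$. In fact $\ell=1$ is a priori permitted for a uniform $3$-class scheme with $\cJ=\{0,3\}$; it is ruled out only by the cometric hypothesis (when $\ell=1$, both $E_1$ and $E_2$ lie in $\cB$, so $E_1\circ E_2\in\cB$ forces $q^3_{12}=q^0_{12}=0$, and similarly $q^1_{32}=q^2_{31}=0$; one checks that no Q-polynomial ordering survives). The paper also glosses over this. Separately, your line ``$v=2w$ give $m_1=1$'' has the logic reversed: derive $m_1=1$ from the Q-Higman identity $m_2=(w-1)m_1$ together with $m_2=w-1$, and \emph{then} conclude $v=2w$.

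\emph{The real gap} is in the identification with $R(w,2)$, which the paper merely asserts without proof. Your claim that each $\cF$-fibre meets each $\cF'$-fibre in exactly one vertex does not follow from the counting $2w=v$ alone; nothing yet rules out that every $\cF$-fibre sits inside a single $\cF'$-fibre. To close this, argue as follows: since $\cJ=\{0,3\}\ne\{0,1\}=\cJ'$, the two imprimitivity systems are distinct, so $\cI\ne\cI'$. But $|\cI|=|\cI'|=2$ (the $\cF'$-subscheme is one-class because $\sim^{*'}$ has exactly the two classes $\{0,1\}$ and $\{2,3\}$), and both contain $0$; hence the unique nontrivial relation $R_\alpha\in\cI$ is \emph{not} in $\cI'$. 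Thus the two vertices of any $\cF$-fibre lie in different $\cF'$-fibres, giving the required transversality. After that, your identification of the four relations with the four $\sym(w)\times\sym(2)$-orbits is correct, since the remaining relation $R_\gamma\notin\cI\cup\cI'$ must be the full complement of $R_0\cup R_\alpha\cup R_\beta$.
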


\noindent We next obtain some (known) results for the parameters of
cometric Q-antipodal schemes. These are used, for example, to show
that the dismantled schemes are also cometric.

\begin{lemma}\label{cometricparameters} A cometric Q-antipodal
scheme has $b_j^*=c_{d-j}^*$ for all $j \neq \lfloor
\frac{d}{2} \rfloor$, $a_j^*=a_{d-j}^*$ for all $j \neq
\frac{d-1}{2}, \frac{d+1}{2}$, and $m_{d-j}=(w-1)m_j$ for
$j<\frac{d}{2}$. Moreover, for $j = \lfloor \frac{d}{2}
\rfloor$, it holds that $b_j^*=(w-1)c^*_{d-j}$.
\end{lemma}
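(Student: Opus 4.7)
\emph{Proof plan.} The multiplicity identity $m_{d-j}=(w-1)m_j$ for $j<d/2$ is already in hand: a cometric Q-antipodal scheme is Q-Higman with $\ell=\lceil d/2\rceil$, so Definition~\ref{def:Qanti} (equivalently Proposition~\ref{Qantikrein}) delivers it directly. Only the relations among the entries of the Krein array remain.

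The engine will be Schur-multiplication by $E_d$, whose action on each primitive idempotent is read off Proposition~\ref{Qantikrein} (using $\rho_j=1$ for $j<d/2$, $\rho_{d/2}=w$, and $\rho_j=w-1$ for $j>d/2$):
\[
v\,E_h\circ E_d \;=\; \begin{cases} E_{d-h} & \text{if } h<d/2,\\ (w-1)\,E_{d/2} & \text{if } h=d/2 \text{ (only when $d$ is even)},\\ (w-1)\,E_{d-h}+(w-2)\,E_h & \text{if } h>d/2. \end{cases}
\]
Specializing to $h=1$ and comparing with the three-term recurrence $vE_1\circ E_d=a_d^*E_d+b_{d-1}^*E_{d-1}$ immediately yields $a_d^*=0$ and $b_{d-1}^*=1=c_1^*$, which is the endpoint instance of $b_k^*=c_{d-k}^*$.

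For the remaining Krein-array identities, I would compute $v^2\,E_1\circ E_j\circ E_d$ in two different ways, using associativity and commutativity of $\circ$. On one side, expand $vE_1\circ E_j=c_{j+1}^*E_{j+1}+a_j^*E_j+b_{j-1}^*E_{j-1}$ and Schur-multiply each term by $vE_d$ via the case distinction above; on the other side, first form $vE_j\circ E_d=E_{d-j}$ (valid for $j<d/2$) and then apply the recurrence to $vE_1\circ E_{d-j}=c_{d-j+1}^*E_{d-j+1}+a_{d-j}^*E_{d-j}+b_{d-j-1}^*E_{d-j-1}$. For a generic $j$ with $1\le j$ and $j+1<d/2$, all three idempotents appearing on the first side lie in the regime $h<d/2$, so matching coefficients cleanly produces $b_{j-1}^*=c_{d-j+1}^*$, $a_j^*=a_{d-j}^*$, and $c_{j+1}^*=b_{d-j-1}^*$: the claimed symmetries for $k$ away from the midpoint.

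The hard part will be the boundary index $j=\lceil d/2\rceil-1$, where $vE_{j+1}\circ E_d$ jumps to the middle formula (even $d$) or to the reflected formula (odd $d$). I would handle the two parities separately: matching the coefficient of $E_{d/2}$ (even case) or $E_{(d-1)/2}$ (odd case) extracts a factor of $w-1$ and produces the extra identity $b_{\lfloor d/2\rfloor}^*=(w-1)\,c_{d-\lfloor d/2\rfloor}^*$, while in the odd case the coefficient of $E_{(d+1)/2}$ exhibits, through a $(w-2)c_{(d+1)/2}^*$ correction term, precisely how $a_{(d-1)/2}^*$ fails to equal $a_{(d+1)/2}^*$ — confirming the necessity of the two exceptions in the lemma.
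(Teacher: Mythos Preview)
Your proposal is correct. The core mechanism—exploiting the $j\leftrightarrow d-j$ reflection induced by Schur-multiplication with $E_d$—is exactly what drives the paper's proof as well, but the packaging differs in two respects worth noting.

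For the generic symmetries $b_j^*=c_{d-j}^*$ and $a_j^*=a_{d-j}^*$, the paper does not invoke associativity of $\circ$ explicitly. Instead it uses that $\cB=\langle F_i\rangle$ is a $\circ$-ideal in $\cA$, so $E_1\circ F_j\in\cB$; expanding $E_1\circ E_j$ and $E_1\circ E_{d-j}$ by the three-term recurrence and summing, the constraint ``lies in the span of the $F_i$'' forces the matching of coefficients. Your route—computing $v^2E_1\circ E_j\circ E_d$ two ways—yields the same equalities because for $j<d/2$ the map $\circ\,vE_d$ is precisely the reflection $E_h\mapsto E_{d-h}$ on the relevant indices; the two arguments are reformulations of one another.

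The more substantive difference is at the boundary $b_{\lfloor d/2\rfloor}^*=(w-1)c_{d-\lfloor d/2\rfloor}^*$. The paper switches tools here, invoking the standard identity $m_jb_j^*=m_{j+1}c_{j+1}^*$ together with the multiplicity relation $m_{d-j}=(w-1)m_j$ (and, in the even case, the already-established $b_{j-1}^*=c_{j+1}^*$). Your approach is more uniform: the same associativity computation, carried to the boundary index, produces the factor $w-1$ directly from the middle case of $vE_h\circ E_d$. This unification is a modest gain in elegance; the paper's route, on the other hand, requires no parity case-split.
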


\begin{proof}
From the fact that $E_1 \circ F_j \in \cB$, it follows that  this matrix
is a linear combination of the $F_i$. From the expressions of
$E_1 \circ E_j$ and $E_1 \circ E_{d-j}$ in terms of Krein
parameters and idempotents, we then find that $b_j^*=c_{d-j}^*$
and $a_j^*=a_{d-j}^*$ for all $j \neq \lfloor \frac{d}{2}
\rfloor$. It follows from Lemma \ref{cosetssize2krein} that
$m_{d-j}=(w-1)m_j$ for $j<\frac{d}{2}$.

For odd $d$, and $j=\frac{d-1}{2}$, we have that
$b_j^*=\frac{m_{j+1}}{m_j}c^*_{j+1}=(w-1)c^*_{d-j}$. For even
$d$, and $j=\frac{d}{2}$, we have
$b_j^*=\frac{m_{j+1}}{m_j}c^*_{j+1}=(w-1)\frac{m_{j-1}}{m_j}b^*_{j-1}=(w-1)c^*_{j}=(w-1)c^*_{d-j}$.
\end{proof}

\noindent Before we compute the Krein parameters of the subscheme,
we determine the dual intersection matrix $L^*_d$ and the values of
$\rho_j$. These follow immediately from Proposition
\ref{Qantikrein}.

\begin{lemma}
\label{Lqijk}
The Krein parameters of a cometric Q-antipodal scheme satisfy
the following properties:
\begin{enumerate}
\item[(i)] $q^{j}_{d,d-j}=w-1$ for
$j \leq \frac{d}{2}$;
\item[(ii)]  $q^{j}_{d,d-j}=1$ and $q^{j}_{dj}=w-2$ for
$j > \frac{d}{2}$;
\item[(iii)] $q^i_{dj}=0$ for all other values of $i$
and $j$.
\end{enumerate}
Moreover, $\rho_j=1$ if $j< \frac{d}{2}$, $\rho_{\frac{d}{2}}=w$, and
$\rho_j=w-1$ if $j> \frac{d}{2}$.
\end{lemma}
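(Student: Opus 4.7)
The plan is to derive everything from Proposition \ref{Qantikrein} by identifying the correct value of $\ell$. The first step is to observe, as noted at the start of Section \ref{sec:cometricQ}, that a cometric Q-antipodal scheme is Q-Higman: the relation $\sim^*$ has equivalence classes $\cJ_j = \{j, d-j\}$ for $j < d/2$ together with the singleton $\{d/2\}$ when $d$ is even, and $q^j_{dj} = 0$ for $j < d/2$ since $E_j$ and $E_{d-j}$ are distinct primitive idempotents of $\cA$ fused into a single primitive idempotent of $\cB$. In the terminology of Definition \ref{def:Qanti} this corresponds to taking $\ell = \lceil d/2\rceil$.

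The second step is to substitute $\ell = \lceil d/2\rceil$ directly into Proposition \ref{Qantikrein}. For indices $j < \ell$, which (for both parities of $d$) coincides with the range $j < d/2$, the proposition gives $q^j_{d,d-j} = w-1$ and $\rho_j = 1$. For indices $j > d-\ell$, which again for both parities coincides with $j > d/2$, we obtain $q^j_{d,d-j} = 1$, $q^j_{dj} = w-2$, and $\rho_j = w-1$. This already accounts for parts (ii), (iii) in the ranges $j \neq d/2$, and for the $\rho_j$ values away from $d/2$.

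The remaining case is $j = d/2$, which only arises when $d$ is even. Then $j$ lies in the ``middle'' range $\ell \leq j \leq d-\ell$ of Proposition \ref{Qantikrein} (which collapses to the single value $d/2$), so $q^{d/2}_{d,d/2} = w-1$ and $\rho_{d/2} = w$. Since $d-j = d/2 = j$, this is consistent with the formula $q^j_{d,d-j} = w-1$ in part (i) of the lemma, and gives the remaining $\rho$-value. When $d$ is odd the middle range is empty, so nothing more is needed.

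The only real task is the bookkeeping needed to match Proposition \ref{Qantikrein}'s three index ranges against the trichotomy $j < d/2$, $j = d/2$, $j > d/2$ used in the statement, and in particular to verify that the boundary case $j = d/2$ for even $d$ is correctly absorbed into part (i). No substantive new computation is involved; the lemma is essentially a specialization of Proposition \ref{Qantikrein}.
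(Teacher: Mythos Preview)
Your proposal is correct and is exactly the paper's approach: the paper's proof is the single sentence ``These follow immediately from Proposition~\ref{Qantikrein},'' and you have simply spelled out the bookkeeping of matching the ranges $j<\ell$, $\ell\le j\le d-\ell$, $j>d-\ell$ (with $\ell=\lceil d/2\rceil$) to the trichotomy $j<d/2$, $j=d/2$, $j>d/2$. One small quibble: your stated reason for $q^j_{dj}=0$ when $j<d/2$ (that $E_j$ and $E_{d-j}$ fuse in $\cB$) is not the actual justification---it follows directly from the Q-polynomial condition, since $j<|d-j|$---but this is exactly what the paper records at the start of Section~\ref{sec:cometricQ}, which you cite.
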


\noindent For convenient reference, we also collect here a few
equations involving the remaining Krein parameters that were
obtained in Section \ref{uniformschemes} above.

\begin{lemma}
\label{Lqijk2}
The Krein parameters of a cometric Q-antipodal scheme satisfy
the following properties: if $0\le j< \frac{d}{2}$ and $0\le i\le d$, then
\begin{enumerate}
\item[(i)] $q^h_{i,d-j}=(w-1)q^{d-h}_{ij}$ for $h \le \frac{d}{2}$;
\item[(ii)] $q^h_{i,d-j}=q^{d-h}_{ij}+(w-2)q^h_{ij}$ for $h > \frac{d}{2}$; and
\item[(iii)] $q^h_{i,\frac{d}{2}}=q^{d-h}_{i,\frac{d}{2}}$ for all $h$ when
    $d$ is even.
\end{enumerate}
\end{lemma}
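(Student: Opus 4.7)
The plan is to derive all three identities by expanding the triple Schur product $E_i\circ E_j\circ E_d$ in two different ways and matching the coefficients of each $E_h$. The only input beyond the definition of the Krein parameters is Lemma~\ref{Lqijk}, which translates into the explicit evaluations $E_\ell\circ E_d=\frac{1}{v}E_{d-\ell}$ when $\ell<d/2$; $E_\ell\circ E_d=\frac{w-1}{v}E_{d/2}$ when $\ell=d/2$; and $E_\ell\circ E_d=\frac{1}{v}\bigl((w-1)E_{d-\ell}+(w-2)E_\ell\bigr)$ when $\ell>d/2$.

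For parts (i) and (ii), fix $0\leq j<d/2$ and $0\leq i\leq d$ and compute $E_i\circ E_j\circ E_d$ two ways. Associating on the right, the first case above gives $E_j\circ E_d=\frac{1}{v}E_{d-j}$, so the triple product equals $\frac{1}{v^{2}}\sum_h q^h_{i,d-j}\,E_h$. Associating on the left, first expand $E_i\circ E_j=\frac{1}{v}\sum_k q^k_{ij}\,E_k$ and then apply the three-case formula to each $E_k\circ E_d$. For fixed $h$, contributions to the coefficient of $E_h$ arise precisely from $k=d-h$ (available both when $k<d/2$ and when $k>d/2$) and from $k=h$ (only when $k>d/2$). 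Splitting into the subcases $h<d/2$, $h=d/2$, and $h>d/2$ and equating with the right-associated expression immediately yields (i) and (ii).

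For part (iii) I repeat the argument with $j$ replaced by $d/2$. The right association now gives $E_i\circ E_{d/2}\circ E_d=\frac{w-1}{v^{2}}\sum_h q^h_{i,d/2}\,E_h$ via the middle case. The left association, on equating coefficients of $E_h$, produces $(w-1)\,q^h_{i,d/2}=(w-1)\,q^{d-h}_{i,d/2}$ when $h<d/2$, and $(w-1)\,q^h_{i,d/2}=q^{d-h}_{i,d/2}+(w-2)\,q^h_{i,d/2}$ when $h>d/2$; each simplifies to $q^h_{i,d/2}=q^{d-h}_{i,d/2}$, while the case $h=d/2$ is tautological.

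The only genuine obstacle is bookkeeping: for each fixed $h$, one must track carefully which values of $k$ feed into the coefficient of $E_h$ under the three-case formula for $E_k\circ E_d$. Nothing deeper is required, and in particular no appeal to the general Krein associativity identities of \cite[Lemma~2.3.1]{bcn} is needed beyond what is already packaged in Lemma~\ref{Lqijk}.
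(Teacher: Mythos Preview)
Your proof is correct and is precisely the approach the paper itself indicates. In Section~\ref{uniformschemes} the authors remark that these identities ``can for example be derived (from \cite[Lemma 2.3.1]{bcn} or directly by working out the product $E_i \circ E_j \circ E_d$ in different ways),'' and Lemma~\ref{Lqijk2} merely records the specialization to the cometric Q-antipodal case; your argument carries out the second of these two options in full.
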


\subsection{Subschemes}

\noindent Lemma \ref{subkrein} can now be used to show that the subschemes are cometric (which is analogous to a result
on the folded graph of an antipodal distance-regular graph, see \cite[Prop. 4.2.2.ii]{bcn}).

\begin{proposition}\label{Kreinarray} Let $(X,\cR)$ be a cometric Q-antipodal association scheme with $w$ fibres, and Krein array
$\{b_0^*,b_1^*,\dots,b_{d-1}^*;c_1^*,c_2^*,\dots,c_d^*\}$, where $d
\geq 3$. Then the subschemes induced on the fibres are cometric
with Krein array
$$\{b_0^*,b_1^*,\dots,b_{\frac{d-1}{2}-1}^*;c_1^*,c_2^*,\dots,c_{\frac{d-1}{2}}^*\}$$
for $d$ odd, and Krein array
$$\{b_0^*,b_1^*,\dots,b_{\frac{d}{2}-1}^*;c_1^*,c_2^*,\dots,c_{\frac{d}{2}-1}^*,wc_{\frac{d}{2}}^*\}$$
for $d$ even.
\end{proposition}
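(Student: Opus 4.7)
The plan is to use Lemma~\ref{subkrein} to compute the Krein parameters of the subscheme directly from those of the ambient scheme, and verify that the resulting tridiagonal structure is exactly the claimed Krein array. By Lemma~\ref{idempotents}, the primitive idempotents of $\cB$ are $F_j = E_j + E_{d-j}$ for $0 \le j < d/2$ and, if $d$ is even, $F_{d/2} = E_{d/2}$. Their restrictions to a single fibre give the primitive idempotents $\tilde{F}_0, \tilde{F}_1, \ldots, \tilde{F}_{\lfloor d/2 \rfloor}$ of the subscheme, and I will propose $\tilde{F}_1$ as the Q-polynomial generator.

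To apply Lemma~\ref{subkrein} with $i = 1$, I take $i' = 1 \in \cJ_1$ (so $\rho_{i'} = 1$ by Lemma~\ref{Lqijk}, since $d \ge 3$). For each subscheme index $j$ I take $j' = j$ when $j < d/2$ (giving $\rho_{j'} = 1$) and $j' = d/2$ when $j = d/2$ (giving $\rho_{j'} = w$). Because the ambient scheme is cometric with $E_1$ as generator, $q^{h'}_{1, j'}$ is tridiagonal in $h'$: nonzero only for $h' \in \{j' - 1, j', j' + 1\}$ with values $b^*_{j' - 1}$, $a^*_{j'}$, $c^*_{j' + 1}$ respectively.

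I first verify the Q-polynomial property, i.e., that $\tilde{q}^h_{1j} = 0$ whenever $|h - j| > 1$. For $h < d/2$ one has $\cJ_h = \{h, d - h\}$, and neither $h$ (which differs from $j$ by more than $1$) nor $d - h > d/2 \ge j' + 1$ lies in $\{j' - 1, j', j' + 1\}$; for $h = d/2$ (even $d$), $\cJ_h = \{d/2\}$ and $d/2 \notin \{j' - 1, j', j' + 1\}$ unless $|h - j| \le 1$. Once tridiagonality is confirmed, the generic off-diagonal entries follow at once: exactly one term in $\sum_{h' \in \cJ_h} \rho_{h'}\, q^{h'}_{1, j'}$ is nonzero (corresponding to $h' \in \{j - 1, j, j + 1\}$), with $\rho_{h'} = 1$, yielding $\tilde{b}^*_j = b^*_j$ and $\tilde{c}^*_j = c^*_j$ in the generic range. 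For $d$ odd this already covers every entry of the claimed Krein array.

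The main obstacle is the boundary behaviour at $j = d/2$ when $d$ is even, where the exceptional value $\rho_{d/2} = w$ enters either in the denominator (when $j' = d/2$) or in the numerator sum (when $d/2 \in \cJ_h$). For $\tilde{c}^*_{d/2}$, the sum collapses to the single term $\rho_{d/2}\, q^{d/2}_{1, d/2 - 1} = w c^*_{d/2}$, supplying the promised factor $w$. For $\tilde{b}^*_{d/2 - 1}$, both $h' = d/2 - 1$ and $h' = d/2 + 1$ contribute, and I expect the identity $b^*_{d/2 - 1} = c^*_{d/2 + 1}$ from Lemma~\ref{cometricparameters} to consolidate these with the denominator factor $\rho_{j'} = w$, giving $\tilde{b}^*_{d/2 - 1} = \frac{1}{w}\bigl(b^*_{d/2 - 1} + (w - 1) c^*_{d/2 + 1}\bigr) = b^*_{d/2 - 1}$, as required.
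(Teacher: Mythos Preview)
Your proposal is correct and follows essentially the same route as the paper: both apply Lemma~\ref{subkrein} with $i'=1$ and $j'=j$, use the tridiagonality of $q^{h'}_{1,j'}$ from the cometric property to establish the Q-polynomial structure of the subscheme, and handle the boundary at $d/2$ (for $d$ even) via the identity $b^*_{d/2-1}=c^*_{d/2+1}$ from Lemma~\ref{cometricparameters}. The only cosmetic difference is that the paper organizes its case analysis by the value of $h$ while you organize it as ``generic'' versus ``boundary''; the computations match (e.g., your $\tilde{b}^*_{d/2-1}=\frac{1}{w}\bigl(b^*_{d/2-1}+(w-1)c^*_{d/2+1}\bigr)$ is exactly the paper's calculation at $h=\tfrac{d}{2}-1$).
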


\begin{proof} We use Lemma \ref{subkrein} with $i'=i=1$ and $j'=j \leq \frac{d}{2}$ and $h \leq \frac{d}{2}$.

First, let $h < \frac{d}{2}-1$. Then we have that $q^{d-h}_{1j}=0$ because the
scheme is cometric, and hence $\tilde{q}^{h}_{1j}=0$ if $j<h-1$ or $j>h+1$.
Moreover, $\tilde{q}^{h}_{1,h-1}=q^{h}_{1,h-1}=c^*_h$, and
$\tilde{q}^{h}_{1,h+1}=q^{h}_{1,h+1}=b^*_h$. Similarly, it follows for $h \geq
\frac{d}{2}-1$ that $\tilde{q}^{h}_{1j}=0$ if $j<h-1$.

For $h=\frac{d}{2}-1$, we have
$\tilde{q}^{h}_{1,h-1}=q^{h}_{1,h-1}=c^*_h$, and
$\tilde{q}^{h}_{1,h+1}=\frac{1}{w}(q^{h}_{1,h+1}+(w-1)q^{d-h}_{1,h+1})=\frac{1}{w}(b^*_h+(w-1)c^*_{d-h})=b^*_h$.

For $h=\frac{d}{2}$, we obtain that
$\tilde{q}^{h}_{1,h-1}=wq^{h}_{1,h-1}=wc^*_h$, and finally, for
$h=\frac{d-1}{2}$, we obtain that
$\tilde{q}^{h}_{1,h-1}=q^{h}_{1,h-1}=c^*_h$. Thus, it follows
that the scheme is cometric, and the Krein array follows.
\end{proof}

\noindent Note that it follows from the proof that the
Q-polynomial ordering of idempotents is $F_0,F_1,\dots,F_{\lfloor
\frac{d}{2} \rfloor}$. The multiplicities $\tilde{m}_j =
\text{~rank~}F_j$ of a subscheme follow for example as follows:
$\tilde{m}_j=\tilde{q}^0_{jj}=\frac{1}{w}(q^0_{jj}+q^0_{d-j,d-j})=m_j$
for $j \neq \frac{d}{2}$, and
$\tilde{m}_{\frac{d}{2}}=\frac{1}{w}m_{\frac{d}{2}}$.

\subsection{Dismantled schemes}

Proposition \ref{Kreinarray} is a well-known result. In \cite[Thm. 4.7]{mmw} it was shown that a cometric Q-antipodal scheme is
dismantlable, with its dismantled schemes being cometric
Q-antipodal too. The proof of the latter is not complete
however, because incorrect idempotents are suggested there. The
fact that such a dismantled scheme is Q-Higman is clear from
Corollary \ref{Qantipodaluniform}. That it is cometric
Q-antipodal can be shown as follows using Corollary
\ref{dismantledidempotents}.

\begin{theorem}\label{Kreinarraydismantled} Let $(X,\cR)$ be a
cometric Q-antipodal association scheme with $w$ fibres, and
Krein array
$\{b_0^*,b_1^*,\dots,b_{d-1}^*;c_1^*,c_2^*,\dots,c_d^*\}$, where $d
\geq 3$, and let $\ell=\lceil \frac{d}{2} \rceil$. Then the
dismantled scheme induced on a union $Y$ of $w' \geq 2$ fibres
is cometric Q-antipodal with Krein array
$\{\overline{b}_0^*,\overline{b}_1^*,\dots,\overline{b}_{d-1}^*;\overline{c}_1^*,\overline{c}_2^*,\dots,\overline{c}_d^*\},$
where $$\overline{c}_{j}^* = c_{j}^* \text{~for~} j \neq \ell
\text{,~and~} \overline{c}_{\ell}^* =
\frac{w}{w'}c_{\ell}^*~,$$
$$\overline{b}_{j}^* = b_{j}^* \text{~for~} j \neq d-\ell
\text{,~and~}
\overline{b}_{d-\ell}^*=\frac{w}{w'}\frac{w'-1}{w-1}b_{d-\ell}^*~.$$
\end{theorem}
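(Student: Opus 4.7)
The approach is to verify that the ordering $\overline{E}_0, \overline{E}_1, \ldots, \overline{E}_d$ supplied by Corollary \ref{dismantledidempotents} is itself a Q-polynomial ordering for the dismantled scheme on $Y$, and to read off the Krein array by computing the Schur products $\overline{E}_1 \circ \overline{E}_j$ directly. Q-antipodality will come essentially for free: the dismantled scheme is already Q-Higman by Corollary \ref{Qantipodaluniform}, and the sum $\overline{E}_0 + \overline{E}_d = E_0^Y + E_d^Y$ is the block-diagonal identity of the fibre partition inherited on $Y$, so $\overline{\cJ} = \{0, d\}$ in the dismantled scheme.

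The workhorse is the observation that restriction to $Y$ commutes with the Schur--Hadamard product, so $(E_i \circ E_h)^Y = E_i^Y \circ E_h^Y$. Combined with the Q-polynomial three-term recurrence $v\, E_1 \circ E_j = b^*_{j-1} E_{j-1} + a^*_j E_j + c^*_{j+1} E_{j+1}$ of the original scheme, this reduces the problem to routine linear-algebraic manipulation. Noting that $\overline{E}_1 = \frac{w}{w'} E_1^Y$ (since $1 \le \ell - 1$ whenever $d \ge 3$), I would expand $\overline{E}_1 \circ \overline{E}_j$ in four index ranges: (a) $0 \le j \le \ell - 2$, where all three neighbouring idempotents are of the scaled type $\frac{w}{w'} E_k^Y$ and $b^*_{j-1}, a^*_j, c^*_{j+1}$ pass through unchanged (using $v'/v = w'/w$); (b) the boundary indices $j = \ell - 1$ and $j = d - \ell + 1$; (c) $j = d/2$ when $d$ is even, the isolated ``middle'' index; and (d) $j \ge d - \ell + 2$, the mirror image of (a), handled via Lemma \ref{Lqijk2}(ii). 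Schur products of $E_1^Y$ with a paired idempotent $\overline{E}_{d-j} = E_{d-j}^Y + (1 - \frac{w}{w'}) E_j^Y$ produce $E_h^Y$ contributions for $h \in \{d-j-1, d-j, d-j+1\} \cup \{j-1, j, j+1\}$; the identity $q^h_{1,d-j} = (w-1) q^{d-h}_{1j}$ from Lemma \ref{Lqijk2}(i), for $h \le d/2$, then guarantees, after re-expression in the $\overline{E}_k$ basis, that only $\overline{E}_{j-1}, \overline{E}_j, \overline{E}_{j+1}$ survive. Lemma \ref{Lqijk2}(iii) plays the analogous role when $d$ is even and $j = d/2$.

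The main obstacle is pinning down the two corrected boundary coefficients $\overline{c}^*_\ell = \frac{w}{w'} c^*_\ell$ and $\overline{b}^*_{d-\ell} = \frac{w(w'-1)}{w'(w-1)} b^*_{d-\ell}$. The factor $\frac{w}{w'}$ arises naturally as the uncancelled ratio one picks up at the boundary, where an idempotent of type $\frac{w}{w'} E_k^Y$ meets an idempotent of type $E_{k'}^Y + (1 - \frac{w}{w'}) E_{d-k'}^Y$ (or just $E_{k'}^Y$, when $\ell \le k' \le d - \ell$). The additional factor $\frac{w'-1}{w-1}$ in $\overline{b}^*_{d-\ell}$ traces back to Lemma \ref{cometricparameters}, which forces $b^*_{d-\ell} = (w-1) c^*_\ell$ in the original scheme and analogously $\overline{b}^*_{d-\ell} = (w'-1) \overline{c}^*_\ell$ in the dismantled one; this identity both produces the $(w'-1)/(w-1)$ factor and serves as a consistency check on the rest of the computation. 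Once the boundary coefficients are settled, Lemma \ref{cometricparameters} applied to the dismantled scheme, together with cases (a) and (d) of the case analysis, determines all remaining $\overline{a}^*_j, \overline{b}^*_j, \overline{c}^*_j$, completing the proof and verifying that the dismantled scheme is cometric with the claimed Krein array.
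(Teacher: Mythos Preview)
Your proposal is correct and follows essentially the same approach as the paper: both compute the Schur products $\overline{E}_1 \circ \overline{E}_j$ using the expressions for $\overline{E}_j$ from Corollary~\ref{dismantledidempotents}, the three-term recurrence in the original scheme, and the symmetry relations among the Krein parameters of a Q-antipodal scheme. The only cosmetic difference is that you organize the symmetry relations through Lemma~\ref{Lqijk2}, whereas the paper invokes Lemma~\ref{cometricparameters} directly (its worked example being the case $d$ even, $j=\ell+1$); the content is the same.
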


\begin{proof}
The stated result follows from working out the products
$\overline{E}_1 \circ \overline{E}_j$ for all $j$, where we use
the expressions for $\overline{E}_j$ in Corollary
\ref{dismantledidempotents}, and the expressions for the dual
intersection numbers $b_j^*, a_j^*$, and $c_j^*$ in Lemma
\ref{cometricparameters}. For most cases this is rather
straightforward; for readability we will therefore only give
the details of one of the more complicated cases, i.e., that of
$d$ even and $j=\ell+1$. In this case, with
$v'=w'n=\frac{w'}{w}v$ being the number of vertices in $Y$, we
have that
\begin{align*}
v'\overline{E}_1 \circ \overline{E}_{\ell+1} & = vE_1^Y
\circ(E^Y_{\ell+1}+ \frac{w'-w}{w'}E^Y_{\ell-1})\\
 & = b_{\ell}^*E_{\ell}^Y+a_{\ell+1}^*E_{\ell+1}^Y+c_{\ell+2}^*E_{\ell+2}^Y\\
&\quad + \frac{w'-w}{w'}(b_{\ell-2}^*E_{\ell-2}^Y+a_{\ell-1}^*E_{\ell-1}^Y+c_{\ell}^*E_{\ell}^Y)\\
 & = (b_{\ell}^*+\frac{w'-w}{w'}c_{\ell}^*)E_{\ell}^Y + a_{\ell+1}^*(E^Y_{\ell+1}+ \frac{w'-w}{w'}E^Y_{\ell-1})\\
&\quad +c_{\ell+2}^*(E_{\ell+2}^Y+ \frac{w'-w}{w'}E^Y_{\ell-2})\\
 & = \frac{w}{w'}\frac{w'-1}{w-1}b_{\ell}^*\overline{E}_{\ell} + a_{\ell+1}^*\overline{E}_{\ell+1} + c_{\ell+2}^*\overline{E}_{\ell+2}.
\end{align*}
Because $\ell=d-\ell$, it thus follows that
$\overline{b}_{d-\ell}^*=\frac{w}{w'}\frac{w'-1}{w-1}b_{d-\ell}^*$,
$\overline{a}_{\ell+1}^* = a_{\ell+1}^*$, and
$\overline{c}_{\ell+2}^* = c_{\ell+2}^*$. The other parameters
follow similarly, and prove the statement.
\end{proof}

\begin{corollary} Let $(X,\cR)$ be a
cometric Q-antipodal $d$-class association scheme with $w \geq
3$ fibres, with $d$ odd and $\ell=\frac{d+1}{2}$. Then
$a_{\ell}^* \neq 0$. Moreover, if $Y$ is a union of $w'$
fibres, where $w>w' \geq 2$, then $\overline{a}_{\ell-1}^* \neq
0$.
\end{corollary}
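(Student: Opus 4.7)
The plan is to use the fundamental identity $c_i^* + a_i^* + b_i^* = m_1$ (valid in any cometric scheme) together with Lemma~\ref{cometricparameters} to express $a_\ell^*$ in terms of $a_{\ell-1}^*$. Since $d$ is odd we have $d-\ell = \ell-1 = \lfloor d/2 \rfloor$, so Lemma~\ref{cometricparameters} gives $b_\ell^* = c_{d-\ell}^* = c_{\ell-1}^*$ and $b_{\ell-1}^* = (w-1)c_\ell^*$. Writing $a_j^* = m_1 - c_j^* - b_j^*$ for $j = \ell$ and $j = \ell-1$ and subtracting, almost all terms cancel and one obtains
\[
a_\ell^* \; = \; a_{\ell-1}^* + (w-2)\, c_\ell^*.
\]
Since $a_{\ell-1}^*$ is a (nonnegative) Krein parameter, $c_\ell^*$ is a positive entry of the Krein array, and $w \geq 3$, this forces $a_\ell^* \geq (w-2)c_\ell^* > 0$.

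For the second statement, I would run the same game in the dismantled scheme on $Y$. The scheme $(Y,\cR^Y)$ is cometric Q-antipodal by Theorem~\ref{Kreinarraydismantled}; moreover $\overline{m}_1 = m_1$, since by Corollary~\ref{dismantledidempotents} one has $\overline{E}_1 = \frac{w}{w'} E_1^Y$ and the diagonal entries of $E_1$ equal $m_1/v$, so $\mathrm{trace}(\overline{E}_1) = \frac{w}{w'}\cdot w'n\cdot \frac{m_1}{v} = m_1$. Using $d-\ell = \ell-1$ and the formulas of Theorem~\ref{Kreinarraydismantled}, $\overline{c}_{\ell-1}^* = c_{\ell-1}^*$ and $\overline{b}_{\ell-1}^* = \tfrac{w}{w'}\cdot\tfrac{w'-1}{w-1}\cdot b_{\ell-1}^* = \tfrac{w(w'-1)}{w'}\,c_\ell^*$. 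Substituting into $\overline{a}_{\ell-1}^* = \overline{m}_1 - \overline{c}_{\ell-1}^* - \overline{b}_{\ell-1}^*$ and comparing with the analogous expression for $a_{\ell-1}^*$ in the original scheme yields, after cancellation,
\[
\overline{a}_{\ell-1}^* \; = \; a_{\ell-1}^* + \frac{w-w'}{w'}\, c_\ell^*.
\]
Since $a_{\ell-1}^*\geq 0$, $c_\ell^* > 0$, and $w > w' \geq 2$, this is strictly positive, as required.

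There is no serious obstacle here: the proof is essentially bookkeeping with the parameter relations encoded in Lemma~\ref{cometricparameters} and Theorem~\ref{Kreinarraydismantled}, exploiting the coincidence $d-\ell = \ell-1$ for odd $d$. The only point requiring a sanity check is that $\overline{m}_1 = m_1$ so that the Krein-array sum identity in the dismantled scheme has the same right-hand side as in the original; this is immediate from Corollary~\ref{dismantledidempotents} as above. Note also that the second part genuinely needs its own argument rather than reducing to the first part applied to $(Y,\cR^Y)$: applying the first part to the dismantled scheme would give $\overline{a}_\ell^* \neq 0$ (and only for $w' \geq 3$), whereas the statement concerns $\overline{a}_{\ell-1}^*$ and must also cover the bipartite case $w' = 2$.
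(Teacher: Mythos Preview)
Your proof is correct and follows essentially the same bookkeeping approach as the paper, using the identity $a_j^* = b_0^* - c_j^* - b_j^*$ together with Lemma~\ref{cometricparameters} and Theorem~\ref{Kreinarraydismantled}. The only minor difference is in the first part: the paper compares $a_\ell^*$ with $\overline{a}_\ell^*$ in a dismantled scheme (obtaining $0 \le \overline{a}_\ell^* = b_0^* - \tfrac{w}{w'}c_\ell^* - b_\ell^* < a_\ell^*$), whereas you compare $a_\ell^*$ with $a_{\ell-1}^*$ in the original scheme; your route is arguably more direct since it avoids invoking dismantlability for the first claim. For the second part your computation $\overline{a}_{\ell-1}^* = a_{\ell-1}^* + \tfrac{w-w'}{w'}c_\ell^*$ is exactly the quantitative version of the paper's inequality $\overline{a}_{\ell-1}^* > a_{\ell-1}^*$.
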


\begin{proof} If $w>w' \geq 2$, then $\overline{a}_{\ell}^*=\overline{b}_{0}^*-\overline{c}_{\ell}^*-\overline{b}_{\ell}^*=
b_0^*-\frac{w}{w'}c_{\ell}^*-b_{\ell}^*<a_{\ell}^*$, and
similarly $\overline{a}_{\ell-1}^*>a_{\ell-1}^*$. The result
follows from these inequalities.
\end{proof}

\noindent So, if $d$ is odd, and the scheme is cometric
Q-antipodal Q-bipartite, then $w=2$. Moreover, it cannot be a
dismantled scheme of a cometric Q-antipodal scheme with more
fibres.

\subsection{The natural ordering of relations}\label{natural
ordering}

For a cometric scheme, we define the natural ordering of
relations as the one satisfying $Q_{01}>Q_{11}> \cdots
>Q_{d1}$. Recall that $Q_{ij}A_i=vE_j \circ A_i$. Because $\sum_{i
\in \cI} A_i=n(E_0+E_d)=I_{w}\otimes J_n$ for Q-antipodal
schemes, it follows that in this case $Q_{id}$ equals $w-1$ if
$i \in \cI$, and $-1$ otherwise.

The orthogonal polynomials $q_j, j=0,1,\dots,d+1$ associated to the cometric scheme have the property that
$Q_{ij}=q_j(Q_{i1}), j=0,1,\dots,d$ and $q_{d+1}(Q_{i1})=0$. Because the roots of $q_j$ and $q_{j+1}$ interlace (a
standard and easily proven property of orthogonal polynomials, cf. \cite[Thm. 5.3]{orthog}), it follows that the values
of $Q_{id}$ alternate in sign. Thus for cometric Q-antipodal schemes it follows that $\cI=\{0,2,4,\dots.\}.$

\section{Three-class uniform schemes; linked systems of symmetric
designs}\label{sec:threeclass}

Every two-class imprimitive association scheme is uniform and cometric. It has
one (nontrivial) relation within the fibres and one across the fibres (it is a
wreath product of two trivial schemes), and may thus be seen as a linked system
of complete designs. Likewise, an imprimitive three-class scheme with one
relation across the fibres is uniform (and decomposable), but such a scheme
clearly cannot be cometric.

It is well-known that (homogeneous) linked systems of symmetric designs give
three-class association schemes, and in fact, these are uniform, almost by
definition, and cometric Q-antipodal (for information on such linked systems we
refer to \cite{vandam}, \cite{mmw}, and the references therein). In \cite[Thm.
5.8]{vandam} it was conversely shown (in a different context though) that
imprimitive indecomposable three-class schemes with one extra condition on the
multiplicities must come from such linked systems. We can derive this easily
now from the results in the previous sections.

Indeed, let us consider a three-class imprimitive association scheme that is
indecomposable. Such a scheme must have two relations across the fibres and
have a trivial quotient scheme. Thus we may assume that $\cJ=\{0,3\}$,
$\cJ_1=\{1,2\}$, and $\cI=\{0,2\}$. Moreover we may assume that $m_2 \geq m_1$.
It then follows that the scheme is uniform (Q-Higman) if and only if
$m_2=(w-1)m_1$ (which is the case if and only if $m_1=n-1$). It is clear
(straight from the definition) that such a uniform scheme corresponds to a
linked system of symmetric designs. We thus obtain the same result as in
\cite[Thm. 5.8]{vandam}. The eigenmatrices of a three-class uniform scheme can
be written as

\begin{equation*}\label{P-matrix3}
P = \begin{bmatrix}
1 & (w-1)k_1 & n-1 & (w-1)(n-k_1) \\
1 & P_{11} & -1 & -P_{11}  \\
1 & -\frac{1}{w-1}P_{11} & -1 & \frac{1}{w-1}P_{11} \\
1 &-k_1& n-1 & -(n-k_1)
\end{bmatrix} \end{equation*}
and
\begin{equation*}\label{Q-matrix3}
Q = \begin{bmatrix}
1 & n-1 &  (w-1)(n-1) & w-1 \\
1 & Q_{11} & -Q_{11} & -1 \\
1 & -1 & -(w-1) &  w-1 \\
1 & -\frac{k_1}{n-k_1}Q_{11} & \frac{k_1}{n-k_1}Q_{11} & -1
\end{bmatrix},
\end{equation*}
where $k_1$ is the block size of the symmetric designs in the
corresponding linked system. If we order the relations such that
$P_{11}>0$, then $P_{11}=(w-1)\sqrt{\frac{k_1(n-k_1)}{n-1}}$ and
$Q_{11}=\sqrt{\frac{(n-1)(n-k_1)}{k_1}}$. We remark that Noda
\cite[Prop. 0]{noda} showed that $\frac{k_1(n-k_1)}{n-1}$ is a
square (integer) if $w \geq 3$.

Because the equality $m_2=(w-1)m_1$ is equivalent to $q^3_{11}=0$, it follows
that such a uniform scheme is cometric except possibly when $k_1=1$ (note that
$q^{3}_{12}>0$ because $1 \sim^* 2$; and $q^2_{11}>0$ follows except when
$k_1=1$; we omit the derivation). In case $k_1=1$ however, the scheme is
decomposable: it is a rectangular scheme $R(w,n)$ (the direct product of two
trivial schemes), which is cometric (and metric) if and only if exactly one of
$w$ and $n$ equals $2$. We thus conclude the following.

\begin{proposition}\label{Qpoly3}
Consider an imprimitive three-class association scheme that is
indecomposable, and assume without loss of generality that
$\cJ=\{0,3\}$, $\cI=\{0,2\}$, and $m_2 \geq m_1$. Then it is
uniform if and only if $m_2=(w-1)m_1$. If so, then it is
cometric Q-antipodal and corresponds to a linked system of
symmetric designs.
\end{proposition}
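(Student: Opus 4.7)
The plan is to apply the Q-Higman characterisation of uniformity from Theorem \ref{TuniformiffQHigman}, and then to read off the cometric Q-antipodal structure and the linked-system correspondence from the Krein parameters of an imprimitive three-class scheme.

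First, I would pin down the equivalence relation $\sim^*$. The class $\cJ_0 = \cJ = \{0,3\}$ is fixed, and by Lemma \ref{idempotents} the total number of $\sim^*$-classes equals $|\cI| = 2$. Hence $\cJ_1 = \{1,2\}$, and the only consistent value of $\ell$ in Definition \ref{def:Qanti} is $\ell = 2$. The Q-Higman condition $q^d_{jj} = 0$ for $j = 0,\ldots,\ell-1$ then reduces to the single non-trivial equality $q^3_{11} = 0$ (the case $j = 0$ is automatic), which by Lemma \ref{cosetssize2krein} applied with $j = 1 < \ell$ is equivalent to $m_2 = (w-1)m_1$. Combining this with Theorem \ref{TuniformiffQHigman} gives the first assertion.

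For the cometric Q-antipodal conclusion, I would verify the cometric property along the ordering $(E_0, E_1, E_2, E_3)$ by working out the products $E_1 \circ E_j$ using the Q-Higman Krein parameters supplied by Proposition \ref{Qantikrein} together with the standard identities. One checks that $q^h_{1j} = 0$ whenever $|h - j| > 1$, and that $q^{j+1}_{1j} > 0$ except in the single degenerate case $k_1 = 1$; this case gives the decomposable rectangular scheme $R(w,n)$ and is treated separately in the paragraph preceding the proposition. Since $E_3$ is then last in the Q-polynomial ordering and $\cJ = \{0,3\}$, the scheme is cometric Q-antipodal.

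Finally, for the linked-system correspondence, uniformity forces the bipartite incidence structure induced by $R_1$ between any two distinct fibres to share fixed parameters (read off via the $a_{ij}^h$ of Definition \ref{Dunifscheme} and the displayed eigenmatrix $P$), making each such structure a symmetric $2$-$(n,k_1,\lambda)$ design; uniformity across triples of fibres is precisely the linking axiom of Cameron \cite{cameronlinked}. The main obstacle is really the first paragraph; once the single equality $m_2 = (w-1)m_1$ has been traded for the Q-Higman property, everything else follows from the general theory built in Sections \ref{sec:uniform} and \ref{sec:cometricQ} together with the explicit forms of $P$ and $Q$ displayed above.
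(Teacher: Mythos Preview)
Your proposal is correct and follows essentially the same route as the paper: the text preceding the proposition already identifies $\cJ_1=\{1,2\}$, invokes the Q-Higman characterisation (Theorem \ref{TuniformiffQHigman}) to reduce uniformity to the single condition $q^3_{11}=0 \Leftrightarrow m_2=(w-1)m_1$, then checks the cometric ordering $E_0,E_1,E_2,E_3$ via $q^3_{12}>0$ and $q^2_{11}>0$ (the latter failing only for $k_1=1$), and finally reads the linked-system structure straight from Definition \ref{Dunifscheme}. Your write-up is perhaps slightly more explicit in citing Lemma \ref{idempotents} for $|\cJ_1|$ and Proposition \ref{Qantikrein} for the vanishing of $q^1_{13}$, but the argument is the same.
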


\noindent Davis, Martin, and Polhill \cite{DMP} recently
constructed new linked systems of symmetric designs by using difference sets.
These have the same parameters as the classical ones arising from Kerdock codes
\cite{cs}. A recent construction by Holzmann, Kharaghani, and Orrick \cite[Thm.
2.7]{hadi} of real unbiased Hadamard matrices can be used to construct linked
systems of symmetric designs with new parameters. More precisely, starting from an arbitrary Hadamard matrix of
order $2u$ and an arbitrary set of $w-1$ mutually orthogonal Latin squares of side $2u$, they
construct $w-1$ mutually unbiased regular Hadamard matrices of order $4u^2$. Because these Hadamard matrices are regular,
they correspond to symmetric $2$-$(4u^2,2u^2-u,u^2-u)$ designs, and one obtains a linked system of $w-1$ such designs,
and hence a uniform scheme with $w$ fibres of size $4u^2$.

\section{Four-class cometric Q-antipodal association
schemes}\label{sec:four}

We next consider the four-class schemes, comparing the ``class I''
imprimitive schemes of Higman with the cometric Q-antipodal schemes.

\subsection{A linked system of Van Lint-Schrijver partial
geometries}\label{sec:vls}

Uniform association schemes with three classes and more than
one relation across fibres thus turn out to be cometric. For
four classes this is not the case. There are several examples
with just two fibres that are not cometric, such as those
(non-cometric) schemes generated by bipartite distance-regular
graphs with diameter four. The following example of a system of
linked partial geometries by Cameron and Van Lint \cite{cvl} is
perhaps more interesting because it has three fibres.

\begin{example}
Consider
the ternary repetition code $C$ of length $6$. The vertices of
the association scheme are the $243$ cosets of $C$ in $GF(3)^6$,
and these can be partitioned into three fibres according to the
sum of the coordinates of any vector in the coset. Consider the
graph where two cosets in different fibres are adjacent if one
can be obtained from the other by adding a vector of weight
one. This defines one of the two relations across fibres, and
it generates the entire four-class scheme. The incidence
structure between two fibres is a partial geometry that is
isomorphic to the one constructed by Van Lint and Schrijver
\cite{LS} (with parameters $pg(5+1,5+1,2)$), which has as a
point graph (and line graph) a strongly regular graph with
parameters $(81,30,9,12)$; this gives the two (nontrivial)
relations on the fibres. The scheme is not cometric because
$q^1_{13} \neq 0$.
\end{example}

\subsection{Higman's imprimitive four-class schemes}

Higman \cite{Htriality} studied imprimitive four-class association
schemes, and classified these according to the dimensions of the
subalgebras $\cB$ and $\cC$ associated to a fixed imprimitivity
system (or ``parabolic'') as outlined in Section \ref{Subsec:imprim}
above. Since we showed that $\cB$ has dimension $|\cI|$ and $\cC$
has dimension $|\cJ|$, we may say that a four-class scheme falls
into Higman's ``class I'' (relative to a given imprimitivity system)
if it has $|\cI|=3$ and $|\cJ|=2$. It is known that the cometric
Q-antipodal four-class association schemes fall into this ``class
I". In the next section we shall characterize the cometric schemes in this class.

Let us consider a ``class I" scheme. Although Higman ordered relations and
idempotents differently, we will assume (without loss of generality) that
$\cJ=\{0,4\}$ and $\cI=\{0,2,4\}$. Then, using Lemma \ref{idempotents}, we may
assume that $\cJ_1=\{1,3\}$ and $\cJ_2=\{2\}$.  So the subscheme on each fibre
is a strongly regular graph, on $n$ vertices with valency $k$, say. Let $r$ and
$s$ denote the nontrivial eigenvalues of this graph and let $f$ and $g$ denote
the multiplicities of $r$ and $s$, respectively. The eigenmatrices $\tilde{P}$
and $\tilde{Q}$ for this strongly regular graph are related to the
eigenmatrices of this four-class scheme by Equation \eqref{EPQtilde}. Using
this, we claim (and Higman \cite{Htriality} obtained the same) that the
eigenmatrices for a ``class I'' scheme can be written as

\begin{equation}\label{P-matrix}
P = \begin{bmatrix}
1 & (w-1)k_1 & k & (w-1)(n-k_1) & n-1-k \\
1 & P_{11} & r & -P_{11} & -1-r \\
1 & 0 & s & 0 & -1-s \\
1 & -\frac{m_1}{m_3}P_{11} & r & \frac{m_1}{m_3}P_{11} & -1-r \\
1 &-k_1& k & -(n-k_1)& n-1-k
\end{bmatrix} \end{equation}
and
\begin{equation}\label{Q-matrix}
Q = \begin{bmatrix}
1 & m_1 & wg & m_3 & w-1 \\
1 & Q_{11} & 0 & -Q_{11} & -1 \\
1 & \frac{m_1}{k}r & \frac{wg}{k}s & \frac{m_3}{k}r & w-1 \\
1 & -\frac{k_1}{n-k_1}Q_{11} & 0 & \frac{k_1}{n-k_1}Q_{11} & -1 \\
1 & -\frac{m_1}{n-1-k}(1+r) & -\frac{wg}{n-1-k}(1+s) &
-\frac{m_3}{n-1-k}(1+r) & w-1
\end{bmatrix},
\end{equation}
where $k_1:=1+p^1_{12}+p^1_{14}$ and the remaining
unknowns are related by
$$ m_1+m_3 = wf, \qquad  P_{11}m_1 = Q_{11}v_1, \qquad v_1=(w-1)k_1.$$
Indeed, for a given vertex $x$ and a fibre
$U$ not containing $x$,  $k_1$ equals the number of
1-neighbors of $x$ in $U$. So the incidence structure
between any two fibres induced by relation $R_1$ is a square 1-design with
block size $k_1$.  Thus the total number of 1-neighbors of $x$ equals
$v_1=(w-1)k_1$.  We also have $Q_{12}=Q_{32}=0$ because
$\cJ_2=\{2\}$ forces $E_2 \in \cB$.
The remaining simplifications  in \eqref{P-matrix}  and \eqref{Q-matrix}
can easily be checked using the orthogonality relations $Q_{ij}=P_{ji}\frac{m_j}{v_i}$
and (column zero of) $PQ=QP=vI$.

It will benefit us to make the expressions \eqref{P-matrix} and
\eqref{Q-matrix} as unambiguous as possible.
Let us agree to order the idempotents $E_1$ and $E_3$ by $m_1 \leq
m_3$. Unless otherwise noted, we will order the relations $R_2$ and
$R_4$ by assuming that $r \geq 0$, and the relations
$R_1$ and $R_3$ by assuming $P_{11} \geq 0$. We now verify that,
if such a scheme is cometric, then $E_0,E_1,E_2,E_3,E_4$ must be the Q-polynomial
ordering, except possibly when $w=2$.

Since columns two and four of $Q$ have repeated entries, neither $E_2$ nor
$E_4$ can be a Q-polynomial generator. In fact, $E_4$ must take the last
position in any Q-polynomial ordering by the same argument as that in the proof
of Theorem \ref{cometricuniformQantipodal}. Finally, $E_2$ cannot take position
three because $q_{13}^4
> 0$ follows from $1\sim^* 3$. The last
two possibilities for our Q-polynomial ordering are $E_0,E_1,E_2,E_3,E_4$ and
$E_0,E_3,E_2,E_1,E_4$. But $q_{43}^3=0$ then gives $m_1=(w-1)m_3$ in the second
case (by Lemma \ref{cosetssize2krein}) and, with our conventions above, this
can only happen if $w=2$. In fact, when $w=2$, we find that either one of these
orderings -- or both of them -- can be Q-polynomial orderings. But in the case
where $E_3$ is the Q-polynomial generator, the natural ordering of relations
described in Section \ref{natural ordering} is instead $R_0,R_3,R_2,R_1,R_4$.

From the 13-entry of the equation $PQ=vI$ and the 11-entry from
the similar equation for the subscheme, we find that $$P_{11}=
\sqrt{\frac{m_3(w-1)k_1(n-k_1)}{m_1f}}.$$ By using the
expression \cite[Thm.~II.3.6(i)]{banito}
\begin{equation}
\label{Eqijk}
q^h_{ij}=\frac{m_im_j}{v}\sum_l
\frac{P_{il}P_{jl}P_{hl}}{v_l^2},
\end{equation}
and the similar expression
$$\tilde{q}_{11}^1 = \frac{f^2}{n} \left( 1 + \frac{ r^3 }{k^2}
- \frac{ (1+r)^3}{(n-1-k)^2} \right)$$
for the subscheme we then derive that
$$q^1_{13}=\frac{m_1m_3}{wf^2}\left(\tilde{q}^1_{11}-\sqrt{\frac{m_3}{m_1(w-1)}}\frac{(n-2k_1)\sqrt{f}}{\sqrt{k_1(n-k_1)}}\right),$$
which, of course, must vanish when the
scheme is cometric with respect to the ordering $E_0,E_1,E_2,E_3$, $E_4$.

\subsection{Linked systems of strongly regular
designs}\label{sec:linkedsrd}

Let us proceed with the expressions of the previous section. From
Lemma \ref{cosetssize2krein}, we know that
$q^1_{14}=0$ if and only if $m_3=m_1(w-1)$. By Definition \ref{def:Qanti}
and Theorem \ref{TuniformiffQHigman}, this happens if and only if
the scheme is uniform. In this case, the incidence structure between
two fibres is a so-called strongly regular design as defined by
Higman \cite{Hsrd}, and the scheme corresponds to a linked system of
strongly regular designs. Cameron and Van Lint \cite{cvl}
constructed such an example, as we saw, and also the example in
Section \ref{HOSI} is a linked system of strongly regular designs.

\begin{proposition}\label{Qpoly4}
An imprimitive four-class association scheme of Higman's
``class I'' is cometric (and therefore Q-antipodal) if and only if $r \neq k$,
$m_3=(w-1)m_1$, and
\begin{equation}\label{q_condition2}
\tilde{q}_{11}^1 = \frac{(n-2k_1)\sqrt{f}}{\sqrt{k_1(n-k_1)}},
\end{equation}
possibly after reordering the idempotents $E_1$ and $E_3$ and the
relations $R_1$ and $R_3$ in the case $w=2$.
\end{proposition}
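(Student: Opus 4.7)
The plan is to check the Q-polynomial property at the idempotent $E_1$, which by \cite[Prop.~2.7.1]{bcn} suffices for cometricity, using the explicit eigenmatrices \eqref{P-matrix} and \eqref{Q-matrix} together with the Krein parameter formula \eqref{Eqijk}.

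For necessity, the analysis preceding the proposition establishes that, up to the reordering noted for $w=2$, any Q-polynomial ordering must be $E_0, E_1, E_2, E_3, E_4$. Since $E_1$ then generates this ordering, column one of $Q$ must have five distinct entries; in particular $Q_{01}=m_1 \neq m_1 r/k = Q_{21}$ forces $r\neq k$. The scheme is cometric Q-antipodal, hence uniform by Theorem \ref{cometricuniformQantipodal} and therefore Q-Higman by Corollary \ref{coruniformQ}, so that $m_3=(w-1)m_1$ in view of Definition \ref{def:Qanti} with $\ell=2$. Finally, the Q-polynomial property forces $q^1_{13}=0$; combining this with the displayed expression for $q^1_{13}$ and the substitution $m_3=(w-1)m_1$ yields \eqref{q_condition2} immediately.

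For sufficiency, the hypothesis $m_3=(w-1)m_1$ together with the Class I structure $\cJ=\{0,4\}$, $\cJ_1=\{1,3\}$, $\cJ_2=\{2\}$ places the scheme in the Q-Higman framework with $\ell=2$; the Krein identities recorded at the end of Section \ref{uniformschemes} then deliver $q^0_{13}=q^0_{14}=q^1_{14}=q^2_{14}=q^4_{12}=0$, the relation $q^1_{13}=(w-1)q^3_{11}$, and $q^4_{13}=m_1$. Thus the only nontrivial remaining vanishing for the ordering $E_0,E_1,E_2,E_3,E_4$ to witness the Q-polynomial property at $i=1$ is $q^1_{13}=0$, and this follows from \eqref{q_condition2} via the same displayed formula. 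The positivity $q^{j+1}_{1j}>0$ for $j=0,1,2,3$ then follows from $q^1_{10}=1$, from $q^4_{13}=m_1>0$, and from a direct calculation via \eqref{Eqijk} showing that $q^2_{11}$, $q^3_{12}$, and $q^2_{13}$ are each a positive scalar multiple of the common factor $1+r^2s/k^2-(1+r)^2(1+s)/(n-1-k)^2$, which equals $(n/f^2)\tilde q^2_{11}$ and is strictly positive whenever the subscheme is a genuine SRG on the fibres (guaranteed by $r\neq k$). Proposition 2.7.1 of \cite{bcn} then delivers cometricity, and Q-antipodality follows from $\cJ=\{0,4\}$.

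The main obstacle is the positivity of this ``common factor'' --- i.e., confirming that the Krein bound $\tilde q^2_{11}\geq 0$ on the subscheme SRG is strict under the sole hypothesis $r\neq k$, which is the nondegeneracy condition on the subscheme. A secondary subtlety is the $w=2$ case, in which two Q-polynomial orderings may coexist; the symmetry of the stated conditions under the relabeling $E_1\leftrightarrow E_3$, $R_1\leftrightarrow R_3$ is precisely the content of the ``possibly after reordering'' clause, and verifying it amounts to checking how \eqref{q_condition2} transforms under this swap (with $k_1$ replaced by $n-k_1$, and $\tilde q^1_{11}$ accordingly).
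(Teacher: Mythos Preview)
Your proposal is correct and follows essentially the same route as the paper. Both arguments fix the ordering $E_0,E_1,E_2,E_3,E_4$ (up to the $w=2$ swap), use the Q-Higman identities coming from $m_3=(w-1)m_1$ to dispatch all the required Krein vanishings except $q^1_{13}=0$ (equivalently $q^3_{11}=0$), and then identify the positivity conditions $q^2_{11}>0$, $q^3_{12}>0$ with $\tilde q^2_{11}>0$, which holds precisely when $r\neq k$. The paper states this last equivalence directly (``$\tilde q^2_{11}=0$ iff the fibre SRG is imprimitive with $r=k$''), whereas you flag it as the main obstacle; either way the verification is the short imprimitivity argument that $\tilde q^2_{11}=0$ forces $\langle F_0,F_1\rangle$ to be a $\circ$-subalgebra. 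One cosmetic point: the parameter $q^2_{13}$ in your positivity list is not itself one of the $q^{j+1}_{1j}$ that must be checked, but since $m_2 q^2_{13}=m_3 q^3_{12}$ it is equivalent to the needed $q^3_{12}>0$, so nothing is lost.
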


\begin{proof}
We address the case $w\ge3$. The same ideas work in the case where
$w=2$, but an extra case argument is involved.

First recall that a cometric Q-antipodal scheme is uniform and we have
just shown that uniformity, the vanishing of $q_{14}^1$, and the equation
$m_3=(w-1)m_1$ are all equivalent.

We know from above that the scheme is cometric if and only if $E_0,E_1,
E_2,E_3,E_4$ is a Q-polynomial ordering. So we need
$$ q^1_{14}=0 , \quad   q^1_{13}=0, \quad  q^2_{11} > 0, \quad
q^3_{12}>0, \quad q^4_{13}>0  .  $$
Observing that
$q^2_{11}=\frac{m_1^2}{wf^2}\tilde{q}^2_{11}$ and
$q^2_{13}=\frac{m_1m_3}{wf^2}\tilde{q}^2_{11}$, and that
$\tilde{q}^2_{11}=0$ if and only if the strongly regular graphs on
the fibres are imprimitive with $r=k$,  one easily works out the
remaining implications in both directions.
\end{proof}

\noindent Thus, for a cometric Q-antipodal four-class
association scheme, all parameters can be expressed in terms
of the number of fibres, $w$, and the parameters of the strongly
regular graph.

\begin{corollary}
\label{CQantipparams} If $(X,\cR)$ is a cometric Q-antipodal four-class
association scheme with $w$ fibres, then there exists a strongly regular graph
with $n=v/w$ vertices, eigenvalues $k$, $r$, and $s$ having multiplicities $1$,
$f$, and $g$ respectively, such that the eigenmatrices for $(X,\cR)$ are given
by Equations \eqref{P-matrix} and \eqref{Q-matrix} where $m_1=f$, $m_3=(w-1)f$,
$$P_{11}= (w-1)\sqrt{k_1(n-k_1)/f}, \qquad  Q_{11}= \sqrt{ f(n-k_1)/k_1 },$$
and
\begin{equation}\label{eqS}
k_1=\frac{n}{2}\left(1-\frac{\tilde{q}_{11}^1}{\sqrt{4f+(\tilde{q}_{11}^1)^2}}\right).
\end{equation}
\end{corollary}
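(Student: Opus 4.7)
The plan is to read off the parameters from Proposition~\ref{Qpoly4} together with the structural information about Higman's ``class I'' schemes developed just before it. A cometric Q-antipodal four-class association scheme belongs to this class, so its eigenmatrices have the general form (\ref{P-matrix}) and (\ref{Q-matrix}), the subscheme on each fibre is a strongly regular graph on $n=v/w$ vertices with spectrum $\{k,r^f,s^g\}$, and the identities $m_1+m_3=wf$ and $P_{11}m_1=Q_{11}v_1=Q_{11}(w-1)k_1$ already hold.

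The first step is to extract $m_1$ and $m_3$. By Proposition~\ref{Qpoly4}, cometric Q-antipodality (with $E_0,E_1,E_2,E_3,E_4$ in the Q-polynomial ordering) forces $m_3=(w-1)m_1$. Combined with $m_1+m_3=wf$ this gives $m_1=f$ and $m_3=(w-1)f$ at once. Substituting these into the closed-form expression $P_{11}=\sqrt{m_3(w-1)k_1(n-k_1)/(m_1 f)}$ derived just above Proposition~\ref{Qpoly4} yields
$$P_{11}=(w-1)\sqrt{k_1(n-k_1)/f},$$
and then the orthogonality relation $Q_{11}=P_{11}m_1/v_1$ simplifies, after cancelling $(w-1)$ and $k_1$, to $Q_{11}=\sqrt{f(n-k_1)/k_1}$.

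For the formula (\ref{eqS}), the task is simply to solve the cometric condition (\ref{q_condition2}) for $k_1$. Writing $t:=n-2k_1$ so that $k_1(n-k_1)=(n^2-t^2)/4$, the identity (\ref{q_condition2}) becomes $\tilde{q}_{11}^1\sqrt{n^2-t^2}=2t\sqrt{f}$, which squares and rearranges to $t^2(4f+(\tilde{q}_{11}^1)^2)=n^2(\tilde{q}_{11}^1)^2$. The sign conventions fixed just above Proposition~\ref{Qpoly4} (order $R_1,R_3$ so that $P_{11}\ge 0$ and $R_2,R_4$ so that $r\ge 0$) select the positive root $t=n\tilde{q}_{11}^1/\sqrt{4f+(\tilde{q}_{11}^1)^2}$, giving $k_1=(n-t)/2$ as stated. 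The only delicate point is tracking these sign choices against the labeling conventions; beyond that, the corollary is a short bookkeeping exercise layered on top of Proposition~\ref{Qpoly4}, so I do not expect any serious obstacle.
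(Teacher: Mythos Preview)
Your argument is correct and follows exactly the approach the paper intends: the paper's own proof is a single line saying the expression for $k_1$ follows from (\ref{q_condition2}), and the remaining identities for $m_1$, $m_3$, $P_{11}$, $Q_{11}$ are meant to be read off from Proposition~\ref{Qpoly4} and the surrounding discussion, just as you do. One small remark: the correct sign of $t=n-2k_1$ is determined already by the unsquared equation $\tilde{q}_{11}^1\sqrt{n^2-t^2}=2t\sqrt{f}$ (both radicals are positive, so $t$ and $\tilde{q}_{11}^1$ share a sign), rather than by appealing to the $P_{11}\ge 0$, $r\ge 0$ conventions after squaring.
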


\begin{proof}
The expression for $k_1$ follows from \eqref{q_condition2}.
\end{proof}

\noindent Moreover, because $\tilde{q}_{11}^1$ is always a rational
number (even in the case when some entry of $\tilde{P}$ is irrational),
we obtain that $\sqrt{k_1(n-k_1)/f}\in\Q$ for a
cometric Q-antipodal scheme with four classes, unless perhaps
when $n=2k_1$ (equivalently, $\tilde{q}_{11}^1=0$).
On the other hand $(w-1)\sqrt{k_1(n-k_1)/f} = P_{11}$ is an
algebraic integer. Therefore $P_{11}$ is a rational integer if $n \neq 2k_1$.
Because a Q-antipodal cometric scheme is
dismantlable, we can take $w=2$ and consider $P_{11}$ for the
dismantled scheme; now we see that $k_1(n-k_1)/f$ is a perfect
square provided $n \neq 2k_1$.

It also follows from \eqref{eqS} that if $\tilde{q}_{11}^1 \neq
0$, then $4f+(\tilde{q}_{11}^1)^2$ is a square of a rational
number. This immediately implies the following result, which
we will use in Section \ref{sec:srd}.

\begin{proposition}\label{conference}
For a cometric Q-antipodal four-class association scheme, the strongly regular
graph on a fibre cannot be a conference graph.
\end{proposition}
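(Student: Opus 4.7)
The plan is to make direct use of the remark immediately preceding the proposition, which says that $4f + (\tilde{q}_{11}^1)^2$ must be the square of a rational number whenever $\tilde{q}_{11}^1 \neq 0$. For a conference graph this translates into a diophantine constraint that can be ruled out by elementary means.

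First I would recall that a conference graph has parameters $(n,k,\lambda,\mu) = (4t+1, 2t, t-1, t)$ for some integer $t \geq 1$, with nontrivial eigenvalues $r = (-1+\sqrt{n})/2$ and $s = (-1-\sqrt{n})/2$, equal multiplicities $f = g = 2t = (n-1)/2$, and the useful symmetries $1 + r = -s$ and $n-1-k = k$.

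Next I would compute $\tilde{q}_{11}^1$ using the formula
\[\tilde{q}_{11}^1 = \frac{f^2}{n}\left(1 + \frac{r^3}{k^2} - \frac{(1+r)^3}{(n-1-k)^2}\right)\]
displayed in Section~\ref{sec:four}. The two symmetries collapse the bracketed expression to $1 + (r^3 + s^3)/k^2$, and then the identities $r+s = -1$ and $rs = (1-n)/4$ give $r^3 + s^3 = -(1+3n)/4$. Substituting $k = (n-1)/2$ and $f = (n-1)/2$ and simplifying yields $\tilde{q}_{11}^1 = (n-5)/4 = t-1$.

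The main step is then to apply the aforementioned remark. For $t \geq 2$ we have $\tilde{q}_{11}^1 = t - 1 \neq 0$, so the quantity $4f + (\tilde{q}_{11}^1)^2 = 8t + (t-1)^2 = t^2 + 6t + 1$ must be the square of a rational number, and being a positive integer it must then be a perfect square. Rewriting as $(t+3)^2 - 8$ and using the elementary bound $(t+2)^2 < t^2 + 6t + 1 < (t+3)^2$, valid for $t \geq 2$, places this integer strictly between two consecutive squares, which is a contradiction. For the remaining case $t = 1$ (the pentagon) we instead have $\tilde{q}_{11}^1 = 0$, and equation~\eqref{eqS} then forces $k_1 = n/2 = 5/2$, which is incompatible with the combinatorial interpretation of $k_1$ as the nonnegative integer $1 + p^1_{12} + p^1_{14}$.

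I do not foresee any serious obstacle. The calculation of $\tilde{q}_{11}^1$ via the symmetries $1+r = -s$ and $n-1-k = k$ is a short symbolic manipulation, and the resulting integer constraint is immediately killed by the two-consecutive-squares bound, with the pentagon handled separately by the direct formula for $k_1$.
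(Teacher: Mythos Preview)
Your proposal is correct and follows essentially the same approach as the paper: both compute $\tilde{q}_{11}^1$ for a conference graph, invoke the rationality consequence of \eqref{eqS} to force $4f+(\tilde{q}_{11}^1)^2$ to be a perfect square, and rule this out by an elementary bound. The only cosmetic differences are that the paper parametrizes by $k=2t$ (writing $\tilde{q}_{11}^1=(k-2)/2$ and reducing to whether $k^2+12k+4$ is a square) and disposes of the pentagon case up front by noting that $n$ odd forces $\tilde{q}_{11}^1\neq 0$ via the integrality of $k_1=n/2$, whereas you split off $t=1$ at the end.
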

\begin{proof}
Assume the contrary. Then $n=2k+1$, $f = k > 0$,
$\tilde{q}_{11}^1=(k-2)/2$, $k$ is even, and
$$
4f+(\tilde{q}_{11}^1)^2 = 4k + \left(\frac{k-2}{2}\right)^2.
$$
Because $n$ is odd, $\tilde{q}_{11}^1 \neq 0$. Therefore
$k^2+12k+4$ is the square of an integer. But $k=-12,0$ are the only
even integers for which the expression $k^2+12k+4$ is a perfect
square.
\end{proof}

\noindent The rationality condition that follows from
\eqref{eqS} turns out to be quite a strong one. It is possible
to show, for example, that also the lattice graphs cannot occur
as our strongly regular graph on the fibres, and probably many more graphs can be
excluded in this way. We will employ this condition as well in the next section.

\subsection{\ \ \ Four-class cometric Q-antipodal Q-bipartite association
schemes; linked systems of Hadamard symmetric nets}

Recently, four-class cometric Q-antipodal Q-bipartite
association schemes were shown to be equivalent to so-called
real mutually unbiased bases, and a connection to Hadamard
matrices was found in \cite{mub}. We also refer to \cite{abs}
for connections between real mutually unbiased bases and
association schemes. Here we shall derive the connection to
Hadamard matrices, and see cometric Q-antipodal Q-bipartite
four-class schemes as linked systems of Hadamard symmetric
nets.

So, let us consider a cometric Q-antipodal Q-bipartite four-class association scheme, and its eigenmatrix $Q$ in
\eqref{Q-matrix} with $m_3=(w-1)m_1=(w-1)f$ and $Q_{11}= \sqrt{ f(n-k_1)/k_1 }$ from Corollary \ref{CQantipparams}.
Since the scheme is cometric Q-bipartite, the column of $Q$ corresponding to a Q-polynomial generator has its $d+1$
distinct values symmetric about zero when ordered naturally \cite[Cor. 4.2]{mmw}. In our case, this is either column
one or column three, and in both cases it follows that $r=0$, $n=k+2$, and $n=2k_1$. This implies that $s=-2$,
$f=\frac{n}{2}$, and the strongly regular graphs on the fibres are cocktail party graphs (complements of matchings).
Now restrict to any dismantled scheme on $w'=2$ fibres; straightforward calculations show that this must correspond to
a so-called Hadamard graph, an antipodal bipartite distance-regular graph of diameter four, cf. \cite[p.\ 19,
425]{bcn}. Such graphs correspond to Hadamard matrices; more precisely, the incidence structure between a pair of
fibres is a Hadamard symmetric net (that is, a symmetric $(m,\mu)$-net with $m=2$). We thus obtain that cometric
Q-antipodal Q-bipartite four-class association schemes are linked systems of Hadamard symmetric nets. Interesting
examples of these are given by the extended Q-bipartite doubles of the three-class uniform schemes corresponding to the
known linked systems of symmetric designs of Section \ref{sec:threeclass}. We expect that the schemes that arise in
this way from the construction by Holzmann, Kharaghani, and Orrick \cite[Thm. 2.7]{hadi} of real unbiased Hadamard
matrices are the same as those coming from the mutually unbiased bases constructed by Wojcan and Beth \cite{wb}, but we
have not checked the details. For more on the correspondence to real mutually unbiased bases, and bounds on $w$, we
refer to \cite{mub}.

On the other hand, we can characterize the cometric Q-antipodal
Q-bipartite four-class association schemes as follows.

\begin{proposition}\label{Qbipartite}
Consider a cometric Q-antipodal four-class association scheme, such that the
strongly regular graph on a fibre is imprimitive. Then it is Q-bipartite.
\end{proposition}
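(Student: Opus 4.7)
The plan is to show that the strongly regular graph on each fibre must be a cocktail party graph; once this is established, Q-bipartiteness is read off column~$1$ of the second eigenmatrix $Q$. Since our scheme is cometric Q-antipodal, Proposition~\ref{Qpoly4} forces $r\ne k$, so an imprimitive SRG on the fibres must be complete multipartite. Write it as $K_{m\times c}$ with $m\ge 2$, $c\ge 2$; then $n=mc$, $k=(m-1)c$, $r=0$, $s=-c$, $f=m(c-1)$, and $n-1-k=c-1$. Substituting $r=0$ into the formula for $\tilde{q}_{11}^1$ that appears just before Proposition~\ref{Qpoly4} yields $\tilde{q}_{11}^1=m(c-2)$.

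The crux of the argument is then to show that $c=2$. By Corollary~\ref{CQantipparams}, Equation~\eqref{eqS} expresses $k_1$ in terms of $\tilde{q}_{11}^1$ and $f$, and since $k_1$ must be a positive integer, the quantity $4f+(\tilde{q}_{11}^1)^2=m^2(c-2)^2+4m(c-1)$ must be a perfect square $T^2$. Factoring $T^2-m^2(c-2)^2=4m(c-1)$ and writing $T=m(c-2)+k$ with $k\ge 1$, the problem reduces to $k\bigl(2m(c-2)+k\bigr)=4m(c-1)$. For $c\ge 3$ this yields $k\le 2+2/(c-2)\le 4$, and a brief case check on $k\in\{1,2,3,4\}$ shows that every possibility either produces a non-integer $m$ or forces $m=1$, contradicting $m\ge 2$. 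Hence $c=2$.

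With $c=2$ the SRG is a cocktail party graph: $n=2m$, $k=n-2$, $r=0$, $s=-2$, $f=m$, $g=m-1$, and \eqref{eqS} gives $k_1=m=n/2$. Substituting these values into \eqref{Q-matrix}, column~$1$ of $Q$ becomes $(m,\sqrt{m},0,-\sqrt{m},-m)^{\top}$, which is symmetric about zero. Since $E_1$ is the Q-polynomial generator (as established in the discussion of ``class~I'' schemes preceding this proposition, for $w\ge 2$), this symmetry of the natural column is exactly the Q-bipartite condition. The main obstacle is therefore just the Diophantine step, which is brief once the factorization above is exploited.
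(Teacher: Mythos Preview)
Your proof is correct and follows essentially the same strategy as the paper's: identify the imprimitive subscheme SRG as complete multipartite (since $r\neq k$), use the rationality constraint coming from Equation~\eqref{eqS} to force the cocktail-party case, and then read off Q-bipartiteness. The differences are purely bookkeeping. You parametrize as $K_{m\times c}$ where the paper writes $t$ parts of size $n/t$ (so your $m(c-2)$ is the paper's $n-2t$). For the Diophantine step you factor $T^2-m^2(c-2)^2$ and do a brief case check on the smaller factor, whereas the paper sandwiches $4f+(\tilde q_{11}^1)^2=(n-2t+2)^2+4t-4$ strictly between $(n-2t+2)^2$ and $(n-2t+4)^2$ and rules out $(n-2t+3)^2$ by parity. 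For the conclusion you exhibit the symmetric column~$1$ of $Q$, while the paper checks $a_i^*=0$ directly from the $P$-matrix via Equation~\eqref{Eqijk}. Neither variant buys anything the other does not.

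Two small points to tighten. First, your final sentence asserts that symmetry of column~$1$ of $Q$ about zero ``is exactly the Q-bipartite condition'', but the paper only quotes the forward direction (Q-bipartite $\Rightarrow$ symmetric column) from \cite{mmw}. The converse you need is immediate---$\sum_j a_j^*=\operatorname{tr}(L_1^*)=\sum_i Q_{i1}=0$ together with nonnegativity of the Krein parameters forces every $a_j^*=0$---but you should say so, or else compute the $a_i^*$ directly as the paper does. Second, your reuse of the letter $k$ for the auxiliary variable in the factorization clashes with the SRG valency $k$ already in play; choose a different symbol.
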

\begin{proof}
By Proposition \ref{Qpoly4}, we have $r\neq k$. So $r=0$, and the
strongly regular graph on a fibre must be a complete multipartite
graph, say a $t$-partite graph with parts of size $\frac{n}{t}$
each. For such a graph $s=-\frac{n}{t}$, $f=n-t$, and
$\tilde{q}_{11}^1 = n-2t$. So, if $\tilde{q}_{11}^1 \neq 0$
(which is equivalent to $s \neq -2$), then $t \leq
\frac{n}{3}$, and $4f+(\tilde{q}_{11}^1)^2$ is square (as
before by \eqref{eqS}). However, for $t \leq \frac{n}{3}$, we
have $(n-2t+2)^2+4t-4=4f+(\tilde{q}_{11}^1)^2<(n-2t+4)^2$, so
$4f+(\tilde{q}_{11}^1)^2$ cannot be square. Thus,
$\tilde{q}_{11}^1 = 0$ and $t =\frac{n}{2}$, so the strongly
regular graph on a fibre is a cocktail party graph, and
therefore $n=k+2$ and $n=2k_1$.
From the expression for the eigenmatrix $P$ in \eqref{P-matrix}
and Equation \eqref{Eqijk}, one
can now derive that the Krein parameters $a_i^*=q^{i}_{1i}$ are
zero for all $i$. Thus the scheme is Q-bipartite. Note that,
in this case, not only is column one, but also is column three of $Q$
symmetric about zero.
\end{proof}

\noindent The same result may be derived by using the fact that
there are two different imprimitivity systems and Suzuki's
results on imprimitive cometric schemes \cite{suzimprim} and
cometric schemes with multiple Q-polynomial orderings
\cite{suztwoq}. It would be interesting to work this out more
generally, that is, for any cometric scheme with multiple
imprimitivity systems, but we leave this to the interested reader.

\subsection{Strongly regular graphs with a strongly regular
decomposition}\label{sec:srd}

One of the interesting features of the example in Section
\ref{HOSI} is that there is a decomposition of the Higman-Sims
graph into two Hoffman-Singleton graphs; thus a strongly
regular graph decomposes into two strongly regular graphs. Such
strongly regular graphs with a strongly regular decomposition
were studied by Haemers and Higman \cite{HH} and Noda
\cite{noda2}, and they occur in more examples of four-class
cometric Q-antipodal association schemes, as we shall see.

Let $\Gamma_0 = (X,E)$ be a primitive strongly regular graph
with adjacency matrix $M$, parameter set
$(v,k_0,\lambda_0,\mu_0)$, and distinct eigenvalues
$k_0>r_0>s_0$. A strongly regular decomposition of $\Gamma_0$
is a partition of $X$ into two sets $U_1$ and $U_2$ such that
the induced subgraphs $\Gamma_i:=\Gamma_{U_i}, i=1,2$ are
strongly regular.

For our purpose, the sets $U_1$ and $U_2$ will play the role of
the $w=2$ fibres of an imprimitive (bipartite) association
scheme, and the disjoint union of the graphs $\Gamma_1$ and
$\Gamma_2$ is one of the two relations in $\cI$. Thus we will
only consider the case that the sets $U_1$ and $U_2$ are of
equal size, and the parameter sets of $\Gamma_1$ and $\Gamma_2$
are the same, say $(n,k,\lambda,\mu)$. The eigenvalues of both
graphs will be denoted by $k \geq r > s$.

To make the connection between a strongly regular graph with a
strongly regular decomposition and our four-class association
schemes more precise, write $M=\mtrx{M_1}{C}{C^\top}{M_2}$, where
the blocks correspond to our partition of $X$. We then define
relations by the following adjacency matrices:
\begin{equation}
\label{Esrdecomp}
\begin{split}
A_0:=\mtrx{I}{0}{0}{I}, \quad A_1:=&\mtrx{0}{C}{C^\top}{0},
\quad A_2:=\mtrx{M_1}{0}{0}{M_2}, \\
A_3:=\mtrx{0}{J-C}{J-C^\top}{0}, \quad & A_4:=\mtrx{J-M_1-I}{0}{0}{J-M_2-I}.
\end{split}
\end{equation}
We shall determine when these relations form an association
scheme, and if they do, we shall see that the scheme is cometric
Q-antipodal. But first we make some more observations.

By taking the complements of $\Gamma_i$, $i=0,1,2$, we obtain
another strongly regular graph with a strongly regular
decomposition; we call this the complementary decomposition.
Note that this complementary decomposition determines the same
relations, i.e., the same $A_i$, $i=0,\dots,4$, but ordered differently. In case
that these relations form an association scheme, it is not
clear a priori which ordering corresponds to the one in the
eigenmatrix $P$ in \eqref{P-matrix}. The straightforward choice
that we make is that we consider that decomposition for which
the eigenvalues $k,r,s$ of $\Gamma_i,i=1,2$ correspond to the
$k,r,s$ in the eigenmatrix $P$ (however, in the case of hemisystems
in the next section we make an exception).

A strongly regular decomposition is called exceptional if
$r_0\neq r$ and $s_0\neq s$. It was shown by Haemers and Higman
\cite[Thm. 2.7]{HH} (and it also follows from \cite[Thm.
1]{noda2}) that in this exceptional case the graphs $\Gamma_1$
and $\Gamma_2$ are conference graphs. Thus,
Proposition~\ref{conference} implies that such an exceptional
decomposition does not correspond to a cometric scheme. An
example of an exceptional decomposition is that of the Petersen graph
into two pentagons.

Note that when the relations defined by \eqref{Esrdecomp}
do form an association scheme, then it has a fusion scheme
$\{A_0,A_1+A_2,A_3+A_4\}$.
In that case it follows from the expression \eqref{P-matrix}
for the eigenmatrix $P$
that the strongly regular graph $\Gamma_0$
with adjacency matrix $M=A_1+A_2$ has an eigenvalue $0+s$,
hence $s_0=s$, and the decomposition is not exceptional. Note
that in \eqref{P-matrix} the roles of
$A_1$ and $A_3$ may be swapped, but this has no
influence on the observation.
Thus, in the case of an exceptional decomposition, \eqref{Esrdecomp}
does not yield an association scheme.

We shall now show that if the relations defined by \eqref{Esrdecomp}
form a scheme, then this scheme is cometric Q-antipodal.
This follows from the following
proposition, where we consider Higman's ``class I" schemes with
two fibres, i.e., $w=2$; since the scheme is bipartite, it is
Q-Higman and Lemma \ref{cosetssize2krein} applies,
giving us $m_4=1$, $m_1=m_3=f$ and
$q^j_{4,4-j}=1$. Thus, $q^1_{41}=q^3_{43}=0$, and the conditions
from Proposition \ref{Qpoly4} for the scheme to be cometric reduce
to $r \neq k$ and $q^3_{11}=0$ or $q^1_{33}=0$.

\begin{proposition}\label{srd}
Consider an imprimitive four-class association scheme in
Higman's ``class I" with two fibres. Suppose that the scheme
has a primitive two-class fusion scheme, and that $r \neq k$.
Then the scheme is cometric Q-antipodal.
\end{proposition}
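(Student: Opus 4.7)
The plan is as follows.  I would first establish, independently of the fusion hypothesis, the intrinsic identity
$$f\,P_{11}^2 = k_1(n-k_1),$$
valid for every bipartite class I 4-class scheme.  Writing $N$ for the $U\times V$ block of $A_1$, the $U\times U$ block $NN^{\top}$ of $A_1^{2}$ can be expressed in two ways.  Via the intersection numbers (only $R_0, R_2, R_4$ contribute within a fibre) it equals $p^0_{11} I + p^2_{11} M_1 + p^4_{11}(J-I-M_1)$; via the spectral decomposition, using that $E_0+E_4,\,E_1+E_3,\,E_2$ restrict on $U$ to the primitive idempotents $F_0,F_1,F_2$ of the subscheme, it equals $k_1^{2}F_0|_U + P_{11}^{2}F_1|_U$.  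Matching coefficients on the three subscheme eigenspaces gives three linear equations in $p^0_{11}=k_1,\,p^2_{11},\,p^4_{11}$; elimination together with the fibre SRG relation $f(r-s) = -(k+(n-1)s)$ yields the identity.  Combined with the formulas for $q^3_{11}$ and $q^1_{33}$ obtained from \eqref{Eqijk} applied to \eqref{P-matrix}, the cometric criterion from the discussion preceding the proposition reduces to
$$\tilde q^1_{11}\,P_{11} = \pm(n-2k_1),$$
the two signs distinguishing $q^3_{11}=0$ from $q^1_{33}=0$.

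Next, I would use the primitive two-class fusion to pin down $P_{11}$ and $k_1$.  The fusion corresponds to some $B=\sum_{i\in T}A_i$ with exactly three distinct eigenvalues and connected complement.  A case analysis of $T$ based on the rows of \eqref{P-matrix} rules out $|T|=1$ and $T\in\{\{R_1,R_3\},\{R_2,R_4\}\}$ (which force $B$ or its complement to be disconnected), leaving $T=\{R_1,R_2\}$ (call this case c1) or $T=\{R_1,R_4\}$ (case c2), up to complementation and the symmetry $R_1\leftrightarrow R_3$.  In case c1 the five eigenvalues of $A_1+A_2$ are $k_1+k,\,P_{11}+r,\,s,\,-P_{11}+r,\,k-k_1$; their collapse to three distinct values, together with $k_1\neq 0$ and $r\neq k$, forces $-P_{11}+r = s$ and $P_{11}+r=k-k_1$, i.e.,
$$P_{11}=r-s,\qquad k_1=k-2r+s.$$
The alternative $P_{11}=0$ is excluded because the intrinsic identity would then give $k_1(n-k_1)=0$; case c2 is handled analogously via the complementary subscheme.

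Finally, substituting $P_{11}=r-s$ and $k_1=k-2r+s$ into the intrinsic identity yields the fibre-SRG parameter constraint
$$f(r-s)^{2} = (k-2r+s)(n-k+2r-s).$$
It remains to show that this constraint, together with the subscheme SRG identities, implies $\tilde q^1_{11}(r-s) = \pm(n-2k+4r-2s)$, which is precisely the cometric condition above.  Using $(n-1-k)(k+rs) = -k(1+r)(1+s)$ to eliminate $(n-1-k)^{-2}$ from
$\tilde q^1_{11} = (f^{2}/n)\bigl(1+r^{3}/k^{2}-(1+r)^{3}/(n-1-k)^{2}\bigr),$
and $f(r-s) = -(k+(n-1)s)$ to simplify the resulting expression, a direct polynomial manipulation establishes the desired identity; the sign selects which of $E_1,E_3$ plays the role of Q-polynomial generator.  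The main obstacle is this final algebraic verification: although elementary in principle, it requires careful bookkeeping of signs and judicious use of the SRG relations to remain tractable.
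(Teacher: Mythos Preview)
Your approach is genuinely different from the paper's, and largely viable, but there is a gap in the eigenvalue-matching step and the route is much heavier than necessary.

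In case c1 you claim that collapsing the five eigenvalues of $A_1+A_2$ to three, together with $k_1\neq 0$ and $r\neq k$, forces $-P_{11}+r=s$ \emph{and} $P_{11}+r=k-k_1$. The first equality is fine (its alternative gives $P_{11}<0$), but the second is not: the matching $k-k_1=s$ (so $k_1=k-s$) is equally consistent with three distinct eigenvalues, and neither $k_1\neq 0$ nor $r\neq k$ excludes it. In that sub-case the eigenvalue $P_{11}+r$ occurs with multiplicity $f$ only, so on the idempotent side the fusion is $\{E_1\}$ versus $\{E_2,E_3,E_4\}$. This is where the primitivity hypothesis actually bites: since $q_{11}^4=0$ (bipartite $\Rightarrow$ Q-Higman with $\ell=2$), one gets $E_1\circ E_1\in\langle E_0,E_1\rangle$, so such a fusion would be imprimitive. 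You need this argument (or the observation that the intrinsic identity already pins down $k_1(n-k_1)$, hence $k_1$ up to $R_1\leftrightarrow R_3$) before you can assert $k_1=k-2r+s$. The same issue recurs in case c2.

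By contrast, the paper never computes $P_{11}$ or $k_1$ explicitly and never touches $\tilde q_{11}^1$. It passes immediately to the idempotent side: any two-class fusion determines a partition $\{T_1,T_2\}$ of $\{1,2,3,4\}$ with $E_{T_i}$ the fused idempotents; the differing eigenvalues $\pm P_{11}+r$ force $1$ and $3$ into different parts; primitivity together with $q_{11}^4=0$ rules out $|T_i|=1$; so up to symmetry $T_1=\{1,4\}$. Then expanding $v(E_1+E_4)\circ(E_1+E_4)$ in the fusion basis and using $q_{11}^4=q_{14}^4=q_{44}^4=0$ kills the $E_1+E_4$ coefficient, whence $E_1\circ E_1\in\langle E_0,E_2,E_3\rangle$; combined with $E_3\circ E_3=E_1\circ E_1$ (from $vE_4\circ E_4=E_0$, $vE_3\circ E_4=E_1$) this gives $q_{33}^1=0$ directly. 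No SRG identities, no expression for $\tilde q_{11}^1$, no polynomial bookkeeping. Your approach trades this two-line Krein argument for an explicit parameter determination followed by an admittedly delicate algebraic verification; it can be made to work once the gap above is closed, but the payoff is limited since the result is specific to $w=2$ anyway.
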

\begin{proof}
From the form of the eigenmatrix $P$ in \eqref{P-matrix} it follows that the
only way to obtain a primitive two-class fusion (i.e., one where both
nontrivial relations correspond to connected
strongly regular graphs) is to fuse relation $R_1$ with either
$R_2$ or $R_4$, and to fuse the remaining two nontrivial
relations. But then there exists a corresponding partition $\{T_1,T_2\}$ of
$\{1,2,3,4\}$ such that $E_0$, $E_{T_1}:=\sum_{j\in T_1}E_j$
and $E_{T_2}:=\sum_{j\in T_2}E_j$ are the primitive
idempotents of the fusion scheme. Depending on the fusion of
relations, one of the fused relations has eigenvalue
$\pm P_{11}+r$ corresponding to idempotent $E_1$, and eigenvalue
$\mp P_{11}+r$ corresponding to idempotent $E_3$, these two eigenvalues
differing by $2P_{11}$ in either case.  In any case it
follows that $1$ and $3$ are not in the same set $T_i$.

Now assume first that one of $T_1,T_2$ is a one-element set,
say $T_1=\{i\}$. From the above it follows that $i \neq 2,4$.
If $i=1$, then $E_1\circ E_1$ is a linear combination of
$E_0,E_1,E_2+E_3+E_4$. But $q_{11}^4=0$. Therefore $E_1\circ
E_1\in\langle E_0,E_1\rangle$ implying that the fusion is
imprimitive, which is a contradiction. The case $i=3$ can be
settled analogously.

Thus $|T_1|=|T_2|=2$. Without loss of generality $T_1=\{i,4\}$
for $i=1$, or $i=3$ (the case $i=2$ is eliminated by the above
considerations). Assume without loss of generality that $i=1$;
then
$$
v(E_1 + E_4)\circ (E_1+E_4) = (m_1+1)E_0 + x (E_1+E_4)+ y (E_2+E_3)
$$
for some non-negative reals $x,y$. Because $q_{11}^4=0$,
$q_{14}^4=0$, and $q_{44}^4=0$ (by Proposition \ref{Qantikrein} and
using $w=2$), $E_4$ does not appear in the left-hand side. Therefore $x=0$,
implying $E_1\circ E_1\in \langle E_0,E_2,E_3\rangle$. Together with
$E_3\circ E_3 = E_1\circ E_1$ (which follows from the equations
$vE_4\circ E_4 = E_0$ and $vE_3\circ E_4 = E_1$) we obtain $E_3\circ
E_3\in \langle E_0,E_2,E_3\rangle$. So $q^1_{33}=0$, which yields
the claim.
\end{proof}

\noindent What remains is to show that a decomposition that is
not exceptional gives an association scheme. This gives the
following result.

\begin{proposition}\label{srd_Qpoly} Consider a primitive strongly regular graph with a strongly
regular decomposition into parts with the same parameters. Then
the above-mentioned relations form an association scheme if and
only if the decomposition is not exceptional. If so, then for
$r \neq k$, the scheme is cometric Q-antipodal.
\end{proposition}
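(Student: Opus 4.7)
The ``only if'' direction is already established in the discussion immediately preceding the proposition: if the relations form an association scheme, then the fusion $\{A_0,A_1+A_2,A_3+A_4\}$ has $\Gamma_0$ as its nontrivial relation, and matching the eigenvalues predicted by (\ref{P-matrix}) with the spectrum of $\Gamma_0$ forces $s_0=s$. My plan is therefore to prove the ``if'' direction; the cometric Q-antipodal conclusion under $r\neq k$ will then follow immediately from Proposition~\ref{srd}, since $\{A_0,A_1+A_2,A_3+A_4\}$ is in that case the primitive two-class scheme attached to the primitive strongly regular graph $\Gamma_0$.

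Assume non-exceptionality, and without loss of generality take $s_0=s$ (the case $r_0=r$ is entirely symmetric under $r\leftrightarrow s$). I want to show that the linear span of $A_0,\ldots,A_4$ is closed under matrix multiplication. Products among $A_0,A_2,A_4$ cause no trouble because each is block-diagonal with blocks lying in the Bose--Mesner algebra of the SRG on each fibre. The real content is controlling the off-diagonal block $C$. Expanding $M^2 = (k_0-\mu_0)I + (\lambda_0-\mu_0)M + \mu_0 J$ block by block with respect to $X=U_1\cup U_2$, and using $M_i^2 = (k-\mu)I+(\lambda-\mu)M_i+\mu J$, yields
\[
CC^\top = \sigma I + \tau M_1 + (\mu_0-\mu)J,\qquad M_1 C + CM_2 = (\lambda_0-\mu_0)C + \mu_0 J,
\]
with $\sigma := rs-r_0s_0$ and $\tau := (r_0+s_0)-(r+s)$, and the analogous formula for $C^\top C$ with $M_2$ in place of $M_1$. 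A short rearrangement using $r+s=\lambda-\mu$ and $k-\mu=-rs$ (and their counterparts for $\Gamma_0$) gives $\sigma+\tau s = -(r_0-s)(s_0-s)$, which vanishes under our hypothesis.

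The crux is to upgrade this vanishing into a rigid structural statement about $C$. Let $V_r^{(1)}, V_s^{(1)}\subseteq \mathbf{1}^\perp$ be the $r$- and $s$-eigenspaces of $M_1$, and set $C' := C - \tfrac{k_0-k}{n}J$. Because row and column sums of $C$ all equal $k_0-k$, one has $C'\mathbf{1}=(C')^\top\mathbf{1}=0$, and a quick calculation yields $C'(C')^\top = CC^\top - \tfrac{(k_0-k)^2}{n}J$. On the three $M_1$-invariant subspaces $\langle\mathbf{1}\rangle$, $V_r^{(1)}$, $V_s^{(1)}$ the eigenvalues of this matrix are $0$, $\sigma+\tau r$, and $\sigma+\tau s=0$, respectively, so the column span of $C'$ is contained in $V_r^{(1)}$. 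Hence $M_1 C' = rC'$, and rearranging gives
\[
M_1 C = rC + \tfrac{(k_0-k)(k-r)}{n}J.
\]
Applying the identical argument to $C^\top$ with $M_2$ in place of $M_1$ produces $CM_2 = rC + \tfrac{(k_0-k)(k-r)}{n}J$. In particular $M_1C=CM_2$, and the sum identity $M_1 C + CM_2 = (\lambda_0-\mu_0)C + \mu_0 J$ is then automatically consistent. This eigenspace-concentration step is the main obstacle of the proof; everything downstream is bookkeeping.

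With these identities in hand, every product $A_iA_j$ decomposes into a block-diagonal contribution lying in $\langle A_0,A_2,A_4\rangle$ together with an off-diagonal piece of the form $aC+bJ$ (or its transpose), which lies in $\langle A_1,A_3\rangle$. Routine verification of the products with $i,j\neq 0$ confirms closure under matrix multiplication, and symmetry of the basis matrices forces commutativity (any product in the closed span of symmetric matrices is symmetric, and hence equals its transpose). Thus $(X,\{R_0,\ldots,R_4\})$ is a symmetric association scheme. Since $\Gamma_0$ is a primitive strongly regular graph, the fusion $\{A_0,A_1+A_2,A_3+A_4\}$ is the associated primitive two-class scheme, and under the hypothesis $r\neq k$ Proposition~\ref{srd} then applies to give cometric Q-antipodality, completing the proof.
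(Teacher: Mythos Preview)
Your proof is correct and reaches the same endpoint as the paper (the commutation relation $M_1C=CM_2$, followed by a routine closure check and an appeal to Proposition~\ref{srd}), but you obtain that relation by a genuinely different argument. The paper derives the identity $(r_0+s_0-r-s)(M_1C-CM_2)=0$ by manipulating the block equations for $M^2$ and $M_i^2$, and then invokes an external result of Noda \cite[Thm.~1]{noda2} to exclude $r_0+s_0=r+s$ in the non-exceptional case, forcing $M_1C=CM_2$. You instead use non-exceptionality directly: from $s_0=s$ you get $\sigma+\tau s=0$, so $C'(C')^\top$ vanishes on the $s$-eigenspace of $M_1$; positive semidefiniteness then confines the column space of $C'$ to $V_r^{(1)}$, giving $M_1C'=rC'$ and hence $M_1C=CM_2$. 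Your route is self-contained (no appeal to Noda) and yields the sharper structural fact that the columns of the centered incidence matrix $C'$ are $r$-eigenvectors of $M_1$; the paper's route is a bit shorter once Noda's theorem is taken as a black box, and it also produces the identity $r=\tfrac{r_0+s_0}{2}$ along the way.
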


\begin{proof} We showed before that an exceptional
decomposition does not correspond to an association scheme. So
suppose that the decomposition is not exceptional. From the
parameters of the strongly regular graphs it follows that
\begin{align*}
&M^2 = (r_0+s_0)M -r_0s_0 I +(k_0+r_0s_0)J, \qquad MJ = k_0J,\\
&M_i^2 = (r+s)M_i -rs I +(k+rs)J, \qquad \text{~and~}  \qquad M_iJ = kJ, \quad i=1,2.
\end{align*}
By working out the first equation, it follows that
\begin{gather}\label{eq_Lemma21}
\begin{aligned}
&CJ = C^\top J = (k_0 -k) J,  \\
&M_1 C + CM_2 = (r_0+s_0) C +(k_0+r_0s_0) J,\\
&CC^\top = (r_0+s_0-r-s)M_1 - (r_0s_0-rs) I + (k_0 + r_0s_0-k-rs)J,\\
&C^\top C = (r_0+s_0-r-s)M_2 - (r_0s_0-rs) I + (k_0 + r_0s_0-k-rs)J,
\end{aligned}
\end{gather}
and this implies that $(r_0+s_0-r-s)(M_1C-CM_2)=0$. If
$r_0+s_0=r+s$, then it follows from a result of Noda \cite[Thm.
1]{noda2} that the decomposition is exceptional, hence we must
have that $M_1C=CM_2$. From \eqref{eq_Lemma21} it then follows
that $M_1 C = C M_2 = \frac{r_0 +s_0}{2}
C+\frac{k_0+r_0s_0}{2}J$. Now a routine check shows that the
matrices $A_i, i=0,\dots,4$ form an association scheme, and by
Proposition~\ref{srd} this scheme is cometric Q-antipodal.
\end{proof}

\noindent For the non-exceptional case, Noda \cite{noda2} found
that all parameters of the decomposition can be expressed in
terms of $r_0$ and $s_0$. In our case, we have that $s=s_0$,
which is the complementary case to the one considered in \cite[Thm.
1]{noda2}. From this result, it follows for example that
$r=\frac{r_0+s_0}{2}$. Note that this also follows by
considering the eigenvalues of the fusion scheme using
\eqref{P-matrix}: indeed, we have $s_0=0+s=-P_{11}+r$,
and $r_0=P_{11}+r$.

Haemers and Higman \cite{HH} give a list of parameter sets of
non-exceptional decompositions on at most 300 vertices. The
smallest example is the Clebsch graph that decomposes into two
perfect matchings on 8 vertices. The association scheme
corresponding to this decomposition (consider the complementary
one for the parameters) is the four-class binary Hamming scheme
$H(4,2)$ (which is (co-)metric, (Q-)bipartite, (Q-)antipodal).
Note that this is a dismantled scheme of the cometric
Q-bipartite Q-antipodal scheme (with $w=3$) related to the
so-called $24$-cell. The next example is the Higman-Sims graph
decomposing into two Hoffman-Singleton graphs, and there are
two more examples: on 112 vertices and 162 vertices. The one on
112 vertices is a decomposition of a generalized quadrangle
into two Gewirtz graphs, and it is part of an infinite family of
decompositions coming from hemisystems.

\subsubsection{Hemisystems of generalized quadrangles}\label{Sec:hemi}

Segre \cite{segre} introduced the concept of hemisystems on the
Hermitian surface $H$ in $PG(3,q^2)$ as a set of lines of $H$ such
that every point in $H$ lies on exactly $(q+1)/2$ such lines. This
point-line geometry, denoted $H(3,q^2)$, gives an important
classical family of generalized quadrangles, called the Hermitian
generalized quadrangles. It is now well-known \cite{cdg} that the
incidence relation on lines in this hemisystem yields a strongly
regular subgraph of the line graph of the geometry. Thus we obtain a
strongly regular  decomposition of the (strongly regular) line graph
of this generalized quadrangle.  In fact, this holds for any
hemisystem in a generalized quadrangle $GQ(t^2,t)$.

Let $(\cP,\cL)$ be the point-line incidence structure of a
generalized quadrangle $GQ(t^2,t)$ with $t$ odd. Let $\Gamma_0$ be
the line graph: its vertex set is $X=\cL$ with two vertices
adjacent if the lines have a point in common. This is a strongly
regular graph with parameters $( (t^3+1)(t+1), t(t^2+1), \ t-1,
t^2+1)$ and with eigenvalues $k_0=t(t^2+1)$, $r_0= t-1$, and
$s_0=-1-t^2$.
 A hemisystem in $(\cP,\cL)$ is a subset
$U_1 \subseteq \cL$ with the property that every point in $\cP$ lies on exactly
$(t+1)/2$ lines in $U_1$ and $(t+1)/2$ lines in $U_2=X-U_1$.
Cameron, Delsarte, and Goethals \cite{cdg} showed that
any hemisystem in a generalized quadrangle of
order $(t^2,t)$ corresponds to a strongly regular
decomposition of the line graph of the corresponding generalized quadrangle.
Because the complementary set $U_2$ of lines of a hemisystem is also a hemisystem,
this decomposition $X=U_1 \cup U_2$ has equally sized parts. Moreover, the parameters
of the parts are the same: each $U_i$ induces a subgraph $\Gamma_i$ which
is strongly regular with parameters
$$(n,k,\lambda,\mu)= \left(  \frac{1}{2}(t^3+1)(t+1), \frac{1}{2}(t^2+1)(t-1), \
\frac{1}{2}(t-3), \frac{1}{2}(t-1)^2 \right)
$$
and eigenvalues $k=\frac{1}{2}(t^2+1)(t-1)$, $r=t-1$, and
$s=-\frac{1}{2}(t^2-t+2)$. The decomposition is clearly not
exceptional (note though that here we have the complementary
setting as in the previous section because $r=r_0$), so by
Proposition \ref{srd_Qpoly}, we have

\begin{corollary}
\label{Chemisystems} Let $(\cP,\cL)$ be a generalized quadrangle $GQ(t^2,t)$ with $t$ odd and let $\cC$ denote the set
of all ordered pairs of distinct intersecting lines from $\cL$. Suppose $\cL = U_1 \cup U_2$ is a partition of the
lines into hemisystems. Then the relations $R_0 = \{ (\ell,\ell)| \ell\in \cL\}$, $R_1 = \cC \cap ( U_1 \times U_2 \cup
U_2 \times U_1)$, $R_2 = \cC \cap ( U_1 \times U_1 \cup U_2 \times U_2)$, $R_3 = ( U_1 \times U_2 \cup U_2 \times U_1)
- R_1$, $R_4 = ( U_1 \times U_1 \cup U_2 \times U_2) - R_0 -R_2$ give a cometric Q-antipodal association scheme on
$X=\cL$. This scheme has Krein array $ \left\{ (t^2+1)(t-1),  (t^2-t+1)^2/t, (t^2-t+1)(t-1)/t,  1 \right.$; $1,
(t^2-t+1)(t-1)/t, (t^2-t+1)^2/t,$ $\left. (t^2+1)(t-1) \right\}.$
\end{corollary}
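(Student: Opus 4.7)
The plan is to verify the hypotheses of Proposition~\ref{srd_Qpoly} for the decomposition provided by the hemisystem, and then to extract the Krein array by specializing the eigenmatrices \eqref{P-matrix} and \eqref{Q-matrix}.

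First I would record the classical fact that the line graph $\Gamma_0$ of $GQ(t^2,t)$ is a primitive strongly regular graph with parameters $((t^3+1)(t+1),t(t^2+1),t-1,t^2+1)$ and eigenvalues $k_0=t(t^2+1)$, $r_0=t-1$, $s_0=-(t^2+1)$. Then I would invoke the theorem of Cameron, Delsarte, and Goethals \cite{cdg}, which guarantees that each part $\Gamma_i=\Gamma_{U_i}$ of the hemisystem partition is strongly regular with the common parameter set $(n,k,\lambda,\mu)=((t^3+1)(t+1)/2,\,(t^2+1)(t-1)/2,\,(t-3)/2,\,(t-1)^2/2)$ and eigenvalues $k$, $r=t-1$, $s=-(t^2-t+2)/2$. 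Since $r=r_0=t-1$, the decomposition is not exceptional, and clearly $r\neq k$ for odd $t\geq 3$ (the case $t=1$ being trivial). Proposition~\ref{srd_Qpoly} now directly produces a cometric Q-antipodal four-class association scheme on $X=\cL$ whose defining relations coincide, up to the conventional swap $R_1\leftrightarrow R_3$ (and $R_2\leftrightarrow R_4$) noted in Section~\ref{sec:srd}, with those in the statement.

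For the Krein array, I would specialize \eqref{P-matrix} and \eqref{Q-matrix} to $w=2$ with the above parameters. The multiplicity of $r$ in $\Gamma_i$ is computed from $k+fr+(n-1-f)s=0$ to be $f=(t^2-t+1)(t^2+1)/2$, and a routine computation of $\tilde{q}_{11}^1$ via the expression $\tilde{q}_{11}^1=(f^2/n)\bigl(1+r^3/k^2-(1+r)^3/(n-1-k)^2\bigr)$ together with \eqref{eqS} yields $k_1$, hence $P_{11}$ and $Q_{11}$ from Corollary~\ref{CQantipparams}. With $Q$ in hand, the Krein parameters $q^h_{1j}$ follow from \eqref{Eqijk}, which gives the values $c_j^*=q^j_{1,j-1}$. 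Lemma~\ref{cometricparameters} then supplies the remaining entries via $b_j^*=c_{d-j}^*$ for $j\neq 2$ and $b_2^*=c_2^*$ (since $w=2$).

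The main obstacle is the algebraic simplification of the second step: the intermediate quantities are ungainly rational functions of $t$, and it takes careful bookkeeping to check that they collapse to the compact forms $(t^2+1)(t-1)$, $(t^2-t+1)^2/t$, and $(t^2-t+1)(t-1)/t$ appearing in the stated Krein array. The first step, by contrast, is a clean appeal to Proposition~\ref{srd_Qpoly} once the Cameron--Delsarte--Goethals result is cited.
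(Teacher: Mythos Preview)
Your proposal is correct and follows essentially the same route as the paper: cite the Cameron--Delsarte--Goethals result to obtain a non-exceptional strongly regular decomposition (here $r=r_0=t-1$), then apply Proposition~\ref{srd_Qpoly}. The paper's own proof is just this one-line appeal and does not spell out the Krein array computation at all, so your second paragraph is simply more explicit than what the paper records; the algebraic simplification you flag as the main obstacle is indeed routine (if tedious) and is left implicit in the paper.
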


\noindent Segre \cite{segre} constructed a hemisystem in $H(3,q^2)$ (a
$GQ(q^2,q)$) for $q=3$; it corresponds to the above-mentioned example on 112
vertices with a decomposition into Gewirtz graphs. A breakthrough was made by
Cossidente and Penttila \cite{Penttila}, who constructed hemisystems in
$H(3,q^2)$ for all odd prime powers $q$. Bamberg, De Clerck, and Durante
\cite{Bamberg} constructed a hemisystem for a nonclassical generalized
quadrangle of order $(25,5)$ (which has the same parameters as $H(3,25)$), and
Bamberg, Giudici, and Royle \cite{flock} showed that every flock generalized
quadrangle has a hemisystem. The latter authors \cite{Bamcomp}
recently also classified by computer the hemisystems for the two known
generalized quadrangles of order $(25,5)$, and obtained several examples for
other small generalized quadrangles.

\subsection{Classification, parameter sets, and examples}

We saw in Section \ref{sec:linkedsrd} that the parameters of a
four-class cometric Q-antipodal scheme are completely
determined by those of the strongly regular graph on the
fibres, together with the number of fibres $w$. We used this to
generate ``feasible" parameter sets for four-class cometric
Q-antipodal schemes that are not Q-bipartite, and that have $n
\leq 2000$ and $w \leq 6$. These parameter sets are listed in
the appendix. Standard conditions such as integrality of
parameters $p_{ij}^h$ and nonnegativity of the Krein parameters
$q_{ij}^h$ were checked. Once a parameter set failed, we did
not search for the corresponding parameter set with larger $w$
(because dismantlability would exclude such a parameter set).
We also checked one of the so-called absolute bounds on
multiplicities, i.e., the one in Proposition
\ref{absolutebound} in the next section.

\subsubsection{Absolute bound on the number of fibres}
\label{Subsec:absbound}

By the absolute bound we obtain the following bound for $w$.

\begin{proposition}\label{absolutebound}
For a four-class cometric Q-antipodal scheme with $a_1^* \neq 0$, we have $w \leq (f+1)(f-2)/2g$.
\end{proposition}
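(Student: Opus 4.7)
The plan is to derive this as a direct application of the absolute bound on multiplicities, exploiting the Krein parameters of the cometric Q-antipodal structure. Specifically, I would apply the absolute bound to the entrywise square $E_1 \circ E_1$. Since the $E_h$ are pairwise orthogonal projection idempotents with $\operatorname{rank} E_h = m_h$, and $E_1 \circ E_1 = \tfrac{1}{v}\sum_h q^h_{11} E_h$, we have
\[
\operatorname{rank}(E_1 \circ E_1) \;=\; \sum_{h:\, q^h_{11}\neq 0} m_h \;\le\; \binom{m_1+1}{2} \;=\; \tfrac{1}{2}f(f+1).
\]

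Next, I would identify exactly which $q^h_{11}$ are nonzero. The Q-polynomial (cometric) property with generator $E_1$ forces $E_1\circ E_1 \in \langle E_0,E_1,E_2\rangle$, so $q^h_{11}=0$ for $h\ge 3$; this also follows from $q^4_{11}=0$ via Lemma~\ref{Lqijk} (the Q-antipodal structure). Of the remaining three terms, $q^0_{11}=m_1=f>0$ automatically, and $q^2_{11}=c_2^*>0$ by the very definition of the Q-polynomial ordering. Finally, $q^1_{11}=a_1^*$ is strictly positive in the non-Q-bipartite case (so in particular for the four-class schemes being tabulated, which are not Q-bipartite). Thus $\operatorname{rank}(E_1\circ E_1)=m_0+m_1+m_2=1+f+wg$, using the multiplicities supplied by Corollary~\ref{CQantipparams}.

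Putting these together gives $1+f+wg \le \tfrac{1}{2}f(f+1)$, i.e.
\[
wg \;\le\; \tfrac{1}{2}f(f+1) - f - 1 \;=\; \tfrac{1}{2}(f^2-f-2) \;=\; \tfrac{1}{2}(f+1)(f-2),
\]
which, upon dividing by $g$, yields the claim. The only subtle point—and the main thing to flag—is the treatment of $a_1^*=0$. By Proposition~\ref{Qbipartite} this is precisely the Q-bipartite case, in which the strongly regular graph on a fibre is a cocktail-party graph; here the absolute bound alone gives only the weaker $wg\le \tfrac{1}{2}(f-1)(f+2)$, and one would either note that this is the case excluded from the parameter table or invoke the Hadamard/MUB bounds implicit in the linked-system-of-Hadamard-nets structure described in the preceding subsection.
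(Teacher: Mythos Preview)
Your approach is essentially identical to the paper's: apply the absolute bound to $E_1\circ E_1$, identify $\rank(E_1\circ E_1)=m_0+m_1+m_2=1+f+wg$, and rearrange. The paper's proof is in fact the one-line version of what you wrote, simply asserting $\rank(E_1\circ E_1)=\rank E_0+\rank E_1+\rank E_2$ without comment.

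Your caveat about $a_1^*=0$ is well taken and is a genuine observation the paper omits. As stated, the proposition covers all four-class cometric Q-antipodal schemes, yet the equality $\rank(E_1\circ E_1)=1+f+wg$ requires $q_{11}^1\neq 0$; in the Q-bipartite case one only gets $1+wg\le \tfrac12 f(f+1)$, which is weaker. The paper applies the bound only when tabulating non-Q-bipartite parameter sets, so the issue is moot in practice, but you are right that the proof as written does not cover the Q-bipartite case. Your flagging of this is more careful than the paper itself.
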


\begin{proof}
By the absolute bound (cf. \cite[Thm. 2.3.3]{bcn}) and because $a_1^* \neq 0$, we have
$$f(f+1)/2 \geq \rank(E_1\circ E_1) =\rank(E_0) +\rank(E_1) + \rank(E_2) =
1+f+wg,$$ and the result follows.
\end{proof}

\noindent  For example, for the parameter sets with $n=81$ in
the appendix, we obtain that $w \leq 3$ from $f=20$ and $g=60$.
In general, the bound does not appear to be very good though.

\subsubsection{The small examples}
\label{smallcases}

The first family of parameter sets in the appendix ($n=50$)
corresponds to the examples (with $w=2$ and $w=3$) related to
the Hoffman-Singleton graph in Section \ref{HOSI}. The case
$w=2$ corresponds to a distance-regular graph that is uniquely
determined by the parameters, cf. \cite[p.\ 393]{bcn}. Now
consider more generally an association scheme with $w$ fibres
$V_i, i=1,\dots,w$ (in this family of parameter sets). Because
also the Hoffman-Singleton graph is determined by its
parameters, relation $R_4$ is such a graph on each fibre. Let
us call two vertices in distinct fibres incident if they are
related by relation $R_1$. Because $p^1_{14}=0$ for all $w$, it
follows that if we take a vertex $x \in V_i, i>1$, then the 15
vertices in $V_1$ incident to $x$ will form a coclique in the
Hoffman-Singleton graph on $V_1$. Because distinct $x$ are
incident to distinct cocliques, and there are exactly 100
distinct cocliques of size 15 in the Hoffman-Singleton graph,
it follows that $w \leq 3$. Moreover, because the scheme with
$w=2$ is uniquely determined by its parameters, and is a
dismantled scheme of a scheme with $w=3$, this implies that the
latter scheme is also uniquely determined by its parameters.

For the second family of parameter sets in the appendix
($n=56$) a construction is known for $w=3$. Higman \cite[Ex.
3]{Htriality} for example mentions it can be constructed on the
set of ovals in the projective plane of order $4$. The fibres
are the three orbits of ovals under the action of the group
$L_3(4)$. The case $w=2$ corresponds to a hemisystem of the
generalized quadrangle of order $(9,3)$, or equivalently, to a
strongly regular decomposition of the point graph of $GQ(3,9)$
into two Gewirtz graphs. It is known that such a decomposition,
and hence the corresponding scheme, is unique (the uniqueness
of the hemisystem in the generalized quadrangle is proven by
Hirschfeld \cite[Thm. 19.3.18]{hirschfeld}, and the uniqueness
of the point graph as a strongly regular graph was proven by
Cameron, Goethals, and Seidel \cite{cgssrg}). As in the first
family of parameter sets, we can show here that $w \leq
3$, and that the scheme with $w=3$ is unique. In this case, the
intersection number $p^1_{14}$ equals one (for all $w$), which
implies that the set of 20 neighbors in $V_1$ of any vertex $x
\notin V_1$ must be an induced matching $10K_2$ in the Gewirtz
graph induced on $V_1$. Brouwer and Haemers \cite[p.\ 405]{BH}
mention that there are exactly 112 such induced subgraphs in
the Gewirtz graph, which implies that $w \leq 3$ as well as the
uniqueness of the scheme with $w=3$.

The case $n=64$ has $w \leq 2$. Dismantlability implies that
the schemes with $n=64$ and $w>2$ do not exist (a scheme with
$w=3$ does not occur because for example the intersection
number $p_{11}^1=4.5$ is not integer). The case $w=2$
corresponds to the distance-regular folded 8-cube, which is
uniquely determined by its parameters.

For the family of parameter sets with $n=81$, the absolute bound
implies that $w \leq 3$. Goethals and Seidel \cite[p.\ 156]{GS} give a
decomposition of the strongly regular graph on 243 vertices from the
ternary Golay code (also known as Delsarte graph) into three
strongly regular graphs on 81 vertices. This gives a scheme with
$w=3$ and $n=81$. According to Brouwer \cite{AEBweb}, the
decomposition of the unique strongly 56-regular graph on 162
vertices into two strongly regular graphs on 81 vertices is unique,
hence the association scheme with $w=2$ is unique as well. We also
expect the scheme with $w=3$ to be unique.

Besides the above examples, and the examples related to
triality or hemisystems, there occurs one more family of
examples in the appendix. These are related to the Leech
lattice, cf. \cite[Ex. 4]{Htriality}, and have $n=1408$ and $w
\leq 3$.

Curiously, the Krein array
$\{176, 135, 24, 1; 1, 24, 135, 176 \}$
is formally dual to the intersection array
of a known graph,
a cometric antipodal distance-regular double cover
on 1344 vertices found by Meixner \cite{meixner}.
Likewise,
the Krein array
$\{56,45,16,1;1,8,45,56 \}$
is formally dual to the intersection array
of an antipodal distance-regular triple cover
found by Soicher \cite{soicher} which is not cometric.

\section{Five-class cometric Q-antipodal association
schemes}\label{sec:five}

In \cite{Hsrd2}, Higman introduced so-called strongly regular designs of the
second kind and showed that these are equivalent to coherent configurations of
type [3 3;~ 3]. Because such a coherent configuration is
balanced --- a concept defined by Hirasaka and Sharafdini \cite{HR} --- its two
fibres necessarily have the same size (this was also observed by Higman \cite{Hsrd2}). If in
addition the design has self-dual parameters, then it gives rise to a
five-class uniform scheme.

A trivial way to obtain such a scheme is by taking the bipartite double of a
strongly regular graph (Higman calls the corresponding strongly regular design
of the second kind trivial). Though trivial, there are some cometric (and also
metric) schemes obtained in this way, such as the ones obtained from the
Clebsch graph, Schl\"{a}fli graph, Higman-Sims graph, the McLaughlin graph and
both its subconstituents. These strongly regular graphs have in common that
$q^1_{11}=0$ and $q^2_{12} \neq 0$. It was in fact claimed by Bannai and Ito
\cite[p.\ 314]{banito} that the bipartite double of a scheme is cometric if and
only if the (original) scheme is cometric with $q^i_{1i}=0$ for $i \neq d$ and
$q^d_{id} \neq 0$.

To obtain less trivial examples of cometric schemes, we checked
the examples and table of parameter sets for nontrivial
strongly regular designs of the second kind in \cite{Hsrd2}.
Four parameter sets in the table there turn out to give
cometric schemes. One with $n=162$ (Higman's Example 4.4) is
related to $U_4(3)$, and has Krein array
$\{21,20,9,3,1;1,3,9,20,21\}$. The second one (Higman's Example
4.5) has $n=176$, and can be described using the Steiner
3-design on 22 points. It has Krein array
$\{21,19.36,11,2.64,1;1,2.64,11,19.36,21\}$. The parameter set
with $n=243$ can be realized as a dismantled scheme on two of
the three fibres of a cometric scheme that is the dual of a
metric scheme corresponding to the coset graph of the shortened
extended ternary Golay code (cf. \cite[p.\ 365]{bcn}). Its Krein
array is $\{22,20,13.5,2,1;1,2,13.5,20,22\}$. The last cometric
example from the table has $n=256$ (second such parameter set
in Higman's table) and corresponds to the distance-regular
folded 10-cube.

Higman also mentions (in his Example 4.3) the strongly regular
designs of the second kind related to the family of bipartite
cometric distance-regular dual polar graphs $D_5(q)$. We did
not bother to completely check all other examples mentioned by
Higman \cite{Hsrd2}, but we expect no other cometric examples
among these.

\section{Miscellaneous}
\label{Sec:misc}

In his book on permutation groups, Cameron \cite[p.\ 79]{cameronbook}
describes how to use the computer package GAP to construct the
strongly regular decomposition of the Higman-Sims graph into two
Hoffman-Singleton graphs. This description can easily be extended to
get the linked system of
partial $\lambda$-geometries of Section \ref{HOSI}.

We checked whether any of the remaining examples
mentioned in Higman's unpublished paper on uniform schemes
\cite{Huninform} gives rise to a cometric scheme. Although we
should mention that one of the examples (Example 6) is unclear
to us, we found no cometric schemes among these examples.

Many of the examples mentioned in this paper, and also examples of other cometric association schemes, are listed on
the website \cite{Billwebsite}. Included there are all parameters of the examples.\\

\noindent {\bf Acknowledgements} The authors thank Peter Cameron, Bill Kantor, and Tim Penttila for inspiring
discussions on the topic of this paper, and the referees for comments and corrections on an earlier version.

\bibliographystyle{amsplain}

\newpage
\section*{Appendix}

Below are putative parameter sets of four-class cometric Q-antipodal
association schemes with fibre size $n \leq 2000$ and $w \leq 6$,
and that are not Q-bipartite. The parameter sets are grouped according
to the parameters of the strongly regular graph which would appear
as the subscheme on the fibres. These ``srg'' parameters are given at
the beginning of each group. An exclamation mark (!) means that the strongly regular graph is
unique and a plus sign (+) indicates existence. Most of this information is obtained from
the online tables of strongly regular graphs by Brouwer \cite{Brouwersrgtables}.
For each group of parameter sets, we give the absolute bound of
Proposition \ref{absolutebound} (if relevant).

Each remaining line contains information on one
parameter set. Again, an exclamation mark (!) means that the scheme is
unique, a plus sign (+) indicates existence, and a minus sign (-)
non-existence. Next to this, the Krein array is given, then $w$,
the partition $v =1+  v_1+ v_2+  v_3+ v_4$, and the
spectrum of $R_1$. At the end, some miscellaneous information is
given. The notation {\tt (P)} indicates that the scheme is (or, would be)
also metric. The examples listed here appear in the body of the paper as follows.

\begin{itemize}
\item {\tt Hoff-Singleton} -- the linked system of partial $\lambda$-geometries
related to the Hoffman-Singleton graph (Section \ref{HOSI})
\item {\tt hemisystem} -- schemes arising from hemisystems (Corollary \ref{Chemisystems})
\item {\tt ovals of PG(2,4)} -- Higman's scheme defined on the ovals of $PG(2,4)$
(Section \ref{smallcases})
\item {\tt folded 8-cube} -- (Section \ref{smallcases})
\item {\tt ternary Golay code} -- the decomposition of Goethals and Seidel in \cite{GS}
(Section \ref{smallcases})
\item {\tt D\_4(q)} and  {\tt O+(8,q), triality} -- Higman's triality schemes and their dismantled schemes (Example
    \ref{Ex-triality})
\item {\tt Leech lattice} -- Higman's Leech lattice example
    \cite[Ex. 4]{Htriality} (Section \ref{smallcases})\\
\end{itemize}

{\tiny
\begin{verbatim}


--------------------
   !srg(50,42,35,36)                          w <= 7
!{21, 16, 6,  1; 1, 6,  16, 21}               2  100=1+  15+ 42+  35+  7    15   5 0  -5  -15 Hoff-Singleton (P)
!{21, 16, 8,  1; 1, 4,  16, 21}               3  150=1+  30+ 42+  70+  7    30  10 0  -5  -15 Hoff-Singleton
-{21, 16, 9,  1; 1, 3,  16, 21}               4  200=1+  45+ 42+ 105+  7    45  15 0  -5  -15
-{21, 16, 9.6,1; 1, 2.4,16, 21}               5  250=1+  60+ 42+ 140+  7    60  20 0  -5  -15
-{21, 16, 10, 1; 1, 2,  16, 21}               6  300=1+  75+ 42+ 175+  7    75  25 0  -5  -15
--------------------
   !srg(56,45,36,36)                          w <= 5
!{20, 16.333, 4.667, 1; 1, 4.667, 16.333, 20} 2  112=1+  20+ 45+  36+ 10    20   6 0  -6  -20 hemisystem
!{20, 16.333, 6.222, 1; 1, 3.111, 16.333, 20} 3  168=1+  40+ 45+  72+ 10    40  12 0  -6  -20 ovals of PG(2,4)
-{20, 16.333, 7,     1; 1, 2.333, 16.333, 20} 4  224=1+  60+ 45+ 108+ 10    60  18 0  -6  -20
-{20, 16.333, 7.467, 1; 1, 1.867, 16.333, 20} 5  280=1+  80+ 45+ 144+ 10    80  24 0  -6  -20
--------------------
   +srg(64,28,12,12)
!{28, 15, 6, 1; 1, 6, 15, 28}                 2  128=1+   8+ 28+  56+ 35     8   4 0  -4   -8 folded 8-cube (P)
--------------------
   !srg(81,60,45,42)                          w <= 3
!{20, 18, 3,  1; 1, 3,  18, 20}               2  162=1+  36+ 60+  45+ 20    36   9 0  -9  -36 ternary Golay code
+{20, 18, 4,  1; 1, 2,  18, 20}               3  243=1+  72+ 60+  90+ 20    72  18 0  -9  -36 ternary Golay code
---------------------
   +srg(135,70,37,35)                         w <= 14
+{50, 31.5,  9.375, 1; 1, 9.375, 31.5, 50}    2  270=1+  15+ 70+ 120+ 64    15   6 0  -6  -15 D_4(2) (P)
+{50, 31.5, 12.5,   1; 1, 6.25,  31.5, 50}    3  405=1+  30+ 70+ 240+ 64    30  12 0  -6  -15 O+(8,2), triality
 {50, 31.5, 14.0625,1; 1, 4.6875,31.5, 50}    4  540=1+  45+ 70+ 360+ 64    45  18 0  -6  -15
 {50, 31.5, 15,     1; 1, 3.75,  31.5, 50}    5  675=1+  60+ 70+ 480+ 64    60  24 0  -6  -15
 {50, 31.5, 15.625, 1; 1, 3.125, 31.5, 50}    6  810=1+  75+ 70+ 600+ 64    75  30 0  -6  -15
------------------------


    srg(162,140,121,120)                      w <= 14
 {56, 45, 12,  1; 1, 12,  45, 56}             2  324=1+  36+140+ 126+ 21    36   9 0  -9  -36 (P)
 {56, 45, 16,  1; 1,  8,  45, 56}             3  486=1+  72+140+ 252+ 21    72  18 0  -9  -36
 {56, 45, 18,  1; 1,  6,  45, 56}             4  648=1+ 108+140+ 378+ 21   108  27 0  -9  -36
 {56, 45, 19.2,1; 1,  4.8,45, 56}             5  810=1+ 144+140+ 504+ 21   144  36 0  -9  -36
 {56, 45, 20,  1; 1,  4,  45, 56}             6  972=1+ 180+140+ 630+ 21   180  45 0  -9  -36
------------------------
    srg(196,150,116,110)                      w <= 6
 {45, 40,  6,  1; 1, 6,  40, 45}              2  392=1+  70+150+ 126+ 45     70  14 0 -14  -70
 {45, 40,  8,  1; 1, 4,  40, 45}              3  588=1+ 140+150+ 252+ 45    140  28 0 -14  -70
 {45, 40,  9,  1; 1, 3,  40, 45}              4  784=1+ 210+150+ 378+ 45    210  42 0 -14  -70
 {45, 40,  9.6,1; 1, 2.4,40, 45}              5  980=1+ 280+150+ 504+ 45    280  56 0 -14  -70
 {45, 40, 10,  1; 1, 2,  40, 45}              6 1176=1+ 350+150+ 630+ 45    350  70 0 -14  -70
------------------------
    srg(243,176,130,120)                      w <= 4
 {44, 40.5, 4.5, 1; 1, 4.5, 40.5, 44}         2  486=1+  99+176+ 144+ 66     99  18 0 -18  -99
 {44, 40.5, 6,   1; 1, 3,   40.5, 44}         3  729=1+ 198+176+ 288+ 66    198  36 0 -18  -99
 {44, 40.5, 6.75,1; 1, 2.25,40.5, 44}         4  972=1+ 297+176+ 432+ 66    297  54 0 -18  -99
------------------------
    srg(320,220,156,140)                      w <= 3
 {44, 41.667, 3.333, 1; 1, 3.333, 41.667, 44} 2  640=1+ 144+220+ 176+ 99    144  24 0 -24 -144
 {44, 41.667, 4.444, 1; 1, 2.222, 41.667, 44} 3  960=1+ 288+220+ 352+ 99    288  48 0 -24 -144
------------------------
   +srg(378,325,280,275)                      w <= 19
+{104, 88.2, 16.8, 1; 1, 16.8,88.2, 104}      2  756=1+  78+325+ 300+ 52     78  15 0 -15  -78 hemisystem
 {104, 88.2, 22.4, 1; 1, 11.2,88.2, 104}      3 1134=1+ 156+325+ 600+ 52    156  30 0 -15  -78
 {104, 88.2, 25.2, 1; 1, 8.4, 88.2, 104}      4 1512=1+ 234+325+ 900+ 52    234  45 0 -15  -78
 {104, 88.2, 26.88,1; 1, 6.72,88.2, 104}      5 1890=1+ 312+325+1200+ 52    312  60 0 -15  -78
 {104, 88.2, 28,   1; 1, 5.6, 88.2, 104}      6 2268=1+ 390+325+1500+ 52    390  75 0 -15  -78
------------------------
    srg(392,345,304,300)                      w <= 23
 {115, 96, 20,    1; 1, 20,    96, 115}       2  784=1+  70+345+ 322+ 46     70  14 0 -14  -70 (P)
 {115, 96, 26.667,1; 1, 13.333,96, 115}       3 1176=1+ 140+345+ 644+ 46    140  28 0 -14  -70
 {115, 96, 30,    1; 1, 10,    96, 115}       4 1568=1+ 210+345+ 966+ 46    210  42 0 -14  -70
 {115, 96, 32,    1; 1,  8,    96, 115}       5 1960=1+ 280+345+1288+ 46    280  56 0 -14  -70
 {115, 96, 33.333,1; 1,  6.667,96, 115}       6 2352=1+ 350+345+1610+ 46    350  70 0 -14  -70
------------------------
    srg(400,315,250,240)                      w <= 11
 {84, 75, 10,    1; 1, 10,   75, 84}          2  800=1+ 120+315+ 280+ 84    120  20 0 -20 -120
 {84, 75, 13.333,1; 1, 6.667,75, 84}          3 1200=1+ 240+315+ 560+ 84    240  40 0 -20 -120
 {84, 75, 15,    1; 1, 5,    75, 84}          4 1600=1+ 360+315+ 840+ 84    360  60 0 -20 -120
 {84, 75, 16,    1; 1, 4,    75, 84}          5 2000=1+ 480+315+1120+ 84    480  80 0 -20 -120
 {84, 75, 16.667,1; 1, 3.333,75, 84}          6 2400=1+ 600+315+1400+ 84    600 100 0 -20 -120
------------------------
    srg(540,385,280,260)                      w <= 6
 {77, 72,  6,  1; 1, 6,  72, 77}              2 1080=1+ 210+385+ 330+154    210  30 0 -30 -210
 {77, 72,  8,  1; 1, 4,  72, 77}              3 1620=1+ 420+385+ 660+154    420  60 0 -30 -210
 {77, 72,  9,  1; 1, 3,  72, 77}              4 2160=1+ 630+385+ 990+154    630  90 0 -30 -210
 {77, 72,  9.6,1; 1, 2.4,72, 77}              5 2700=1+ 840+385+1320+154    840 120 0 -30 -210
 {77, 72, 10,  1; 1, 2,  72, 77}              6 3240=1+1050+385+1650+154   1050 150 0 -30 -210
------------------------
    srg(672,440,292,280)
 {176, 135, 24, 1; 1, 24, 135, 176}           2 1344=1+  56+440+ 616+231     56  14 0 -14  -56 (P)
------------------------
    srg(704,475,330,300)                      w <= 4
 {76, 72.6, 4.4,  1; 1, 4.4,  72.6, 76}       2 1408=1+ 304+475+ 400+228    304  40 0 -40 -304
 {76, 72.6, 5.867,1, 1, 2.933,72.6, 76}       3 2112=1+ 608+475+ 800+228    608  80 0 -40 -304
 {76, 72.6, 6.6,  1; 1, 2.2,  72.6, 76}       4 2816=1+ 912+475+1200+228    912 120 0 -40 -304
------------------------
    srg(729,588,477,462)                      w <= 16
 {140, 126, 15,  1; 1,15,  126, 140}          2 1458=1+ 189+ 588+ 540+140   189  27 0 -27 -189
 {140, 126, 20,  1; 1,10,  126, 140}          3 2187=1+ 378+ 588+1080+140   378  54 0 -27 -189
 {140, 126, 22.5,1; 1, 7.5,126, 140}          4 2916=1+ 567+ 588+1620+140   567  81 0 -27 -189
 {140, 126, 24,  1; 1, 6,  126, 140}          5 3645=1+ 756+ 588+2160+140   756 108 0 -27 -189
 {140, 126, 25,  1; 1, 5,  126, 140}          6 4374=1+ 945+ 588+2700+140   945 135 0 -27 -189
------------------------
    srg(760,594,468,450)                      w <= 13
 {132, 120.333,12.667,1;1,12.667,120.333,132} 2 1520=1+ 220+ 594+ 540+165   220  30 0 -30 -220
 {132, 120.333,16.889,1;1, 8.444,120.333,132} 3 2280=1+ 440+ 594+1080+165   440  60 0 -30 -220
 {132, 120.333,19,    1;1, 6.333,120.333,132} 4 3040=1+ 660+ 594+1620+165   660  90 0 -30 -220
 {132, 120.333,20.267,1;1, 5.067,120.333,132} 5 3800=1+ 880+ 594+2160+165   880 120 0 -30 -220
 {132, 120.333,21.111,1;1, 4.222,120.333,132} 6 4560=1+1100+ 594+2700+165  1100 150 0 -30 -220
------------------------
    srg(800,714,638,630)                      w <= 34
 {204, 175, 30, 1; 1, 30, 175, 204}           2 1600=1+ 120+ 714+ 680+ 85   120  20 0 -20 -120 (P)
 {204, 175, 40, 1; 1, 20, 175, 204}           3 2400=1+ 240+ 714+1360+ 85   240  40 0 -20 -120
 {204, 175, 45, 1; 1, 15, 175, 204}           4 3200=1+ 360+ 714+2040+ 85   360  60 0 -20 -120
 {204, 175, 48, 1; 1, 12, 175, 204}           5 4000=1+ 480+ 714+2720+ 85   480  80 0 -20 -120
 {204, 175, 50, 1; 1, 10, 175, 204}           6 4800=1+ 600+ 714+3400+ 85   600 100 0 -20 -120
------------------------
    srg(875,570,385,345)                      w <= 3
 {76, 73.5, 3.5,   1; 1, 3.5,  73.5, 76}      2 1750=1+ 400+ 570+ 475+304   400  50 0 -50 -400
 {76, 73.5, 4.667, 1; 1, 2.333,73.5, 76}      3 2625=1+ 800+ 570+ 950+304   800 100 0 -50 -400
-------------------------
   +srg(1120,390,146,130)                     w <= 54
+{300, 212.333,38.889,1;1,38.889,212.333,300} 2 2240=1+  40+ 390+1080+729    40  12 0 -12  -40 D_4(3) (P)
+{300, 212.333,51.852,1;1,25.926,212.333,300} 3 3360=1+  80+ 390+2160+729    80  24 0 -12  -40 O+(8,3), triality
 {300, 212.333,58.333,1;1,19.444,212.333,300} 4 4480=1+ 120+ 390+3240+729   120  36 0 -12  -40
 {300, 212.333,62.222,1;1,15.556,212.333,300} 5 5600=1+ 160+ 390+4320+729   160  48 0 -12  -40
 {300, 212.333,64.815,1;1,12.963,212.333,300} 6 6720=1+ 200+ 390+5400+729   200  60 0 -12  -40
-------------------------
    srg(1210,819,568,525)                     w <= 6
 {117, 112, 6,  1; 1, 6,  112, 117}           2 2420=1+ 495+ 819+ 715+390   495  55 0 -55 -495
 {117, 112, 8,  1; 1, 4,  112, 117}           3 3630=1+ 990+ 819+1430+390   990 110 0 -55 -495
 {117, 112, 9,  1; 1, 3,  112, 117}           4 4840=1+1485+ 819+2145+390  1485 165 0 -55 -495
 {117, 112, 9.6,1; 1, 2.4,112, 117}           5 6050=1+1980+ 819+2860+390  1980 220 0 -55 -495
 {117, 112,10,  1; 1, 2,  112, 117}           6 7260=1+2475+ 819+3575+390  2475 275 0 -55 -495
--------------------------
    srg(1225,1008,833,812)                    w <= 23
 {216, 196, 21,  1; 1,21,  196, 216}          2 2450=1+ 280+1008+ 945+216   280  35 0 -35 -280
 {216, 196, 28,  1; 1,14,  196, 216}          3 3675=1+ 560+1008+1890+216   560  70 0 -35 -280
 {216, 196, 31.5,1; 1,10.5,196, 216}          4 4900=1+ 840+1008+2835+216   840 105 0 -35 -280
 {216, 196, 33.6,1; 1, 8.4,196, 216}          5 6125=1+1120+1008+3780+216  1120 140 0 -35 -280
 {216, 196, 35,  1; 1, 7,  196, 216}          6 7350=1+1400+1008+4725+216  1400 175 0 -35 -280
----------------------------
   +srg(1376,1225,1092,1078)                  w <= 41
+{300, 264.143,36.857,1;1,36.857,264.143,300} 2 2752=1+ 200+1225+1176+150   200  28 0 -28 -200 hemisystem
 {300, 264.143,49.143,1;1,24.571,264.143,300} 3 4128=1+ 400+1225+2352+150   400  56 0 -28 -200
 {300, 264.143,55.286,1;1,18.429,264.143,300} 4 5504=1+ 600+1225+3528+150   600  84 0 -28 -200
 {300, 264.143,58.971,1;1,14.743,264.143,300} 5 6880=1+ 800+1225+4704+150   800 112 0 -28 -200
 {300, 264.143,61.429,1;1,12.286,264.143,300} 6 8256=1+1000+1225+5880+150  1000 140 0 -28 -200
-------------------------
   +srg(1408,567,246,216)                     w <= 27
+{252, 201.667,22,    1;1,22,    201.667,252} 2 2816=1+ 112+ 567+1296+840   112  24 0 -24 -112 Leech lattice
+{252, 201.667,29.333,1;1,14.667,201.667,252} 3 4224=1+ 224+ 567+2592+840   224  48 0 -24 -112 Leech lattice
 {252, 201.667,33,    1;1,11,    201.667,252} 4 5632=1+ 336+ 567+3888+840   336  72 0 -24 -112
 {252, 201.667,35.2,  1;1, 8.8,  201.667,252} 5 7040=1+ 448+ 567+5184+840   448  96 0 -24 -112
 {252, 201.667,36.667,1;1, 7.333,201.667,252} 6 8448=1+ 560+ 567+6480+840   560 120 0 -24 -112
----------------------------
    srg(1458,1316,1189,1176)                  w <= 47
 {329, 288, 42,  1; 1, 42,  288, 329}         2 2916=1+ 189+1316+1269+141   189  27 0 -27 -189 (P)
 {329, 288, 56,  1; 1, 28,  288, 329}         3 4374=1+ 378+1316+2538+141   378  54 0 -27 -189
 {329, 288, 63,  1; 1, 21,  288, 329}         4 5832=1+ 567+1316+3807+141   567  81 0 -27 -189
 {329, 288, 67.2,1; 1, 16.8,288, 329}         5 7290=1+ 756+1316+5076+141   756 108 0 -27 -189
 {329, 288, 70,  1; 1, 14,  288, 329}         6 8748=1+ 945+1316+6345+141   945 135 0 -27 -189
--------------------------
    srg(1625,1044,693,630)                    w <= 4
 {116, 112.667, 4.333,1;1,4.333,112.667, 116} 2 3250=1+ 725+1044+ 900+580   725  75 0 -75 -725
 {116, 112.667, 5.778,1;1,2.889,112.667, 116} 3 4875=1+1450+1044+1800+580  1450 150 0 -75 -725
 {116, 112.667, 6.5,  1;1,2.167,112.667, 116} 4 6500=1+2175+1044+2700+580  2175 225 0 -75 -725
--------------------------
    srg(1701,1190,847,798)                    w <= 9
 {170, 162,  9,  1; 1, 9,  162, 170}          2 3402=1+ 630+1190+1071+510   630  63 0 -63 -630
 {170, 162, 12,  1; 1, 6,  162, 170}          3 5103=1+1260+1190+2142+510  1260 126 0 -63 -630
 {170, 162, 13.5,1; 1, 4.5,162, 170}          4 6804=1+1890+1190+3213+510  1890 189 0 -63 -630
 {170, 162, 14.4,1; 1, 3.6,162, 170}          5 8505=1+2520+1190+4284+510  2520 252 0 -63 -630
 {170, 162, 15,  1; 1, 3,  162, 170}         6 10206=1+3150+1190+5355+510  3150 315 0 -63 -630
----------------------------
    srg(1936,1620,1360,1332)                  w <= 30
 {315, 288, 28,    1; 1, 28,    288, 315}     2 3872=1+ 396+1620+1540+315   396  44 0 -44 -396
 {315, 288, 37.333,1; 1, 18.667,288, 315}     3 5808=1+ 792+1620+3080+315   792  88 0 -44 -396
 {315, 288, 42,    1; 1, 14,    288, 315}     4 7744=1+1188+1620+4620+315  1188 132 0 -44 -396
 {315, 288, 44.8,  1; 1, 11.2,  288, 315}     5 9680=1+1584+1620+6160+315  1584 176 0 -44 -396
 {315, 288, 46.667,1; 1,  9.333,288, 315}    6 11616=1+1980+1620+7700+315  1980 220 0 -44 -396
--------------------------
    srg(1944,1218,792,714)                    w <= 3
 {116, 113.4, 3.6, 1; 1, 3.6, 113.4, 116}     2 3888=1+ 900+1218+1044+725   900  90 0 -90 -900
 {116, 113.4, 4.8, 1; 1, 2.4, 113.4, 116}     3 5832=1+1800+1218+2088+725  1800 180 0 -90 -900
--------------------------
\end{verbatim}
}

\end{document}